
\documentclass[a4wide,12pt]{amsart}
\usepackage{amssymb,latexsym}
\usepackage[all]{xy}
\usepackage{amscd}
\usepackage{epsfig}
\usepackage[usenames,dvipsnames]{pstricks}
\usepackage{vmargin}

\begin{document}
\title[Triplets of pure complexes]{Triplets of pure free squarefree complexes}
\author{Gunnar Fl{\o}ystad}
\address{Matematisk Institutt\\
         Johs. Brunsgt. 12\\ 
        5008 Bergen} 
\email{gunnar@mi.uib.no}



\keywords{pure resolution, pure complex,
squarefree module, Betti numbers, homology, Alexander duality}
\subjclass[2010]{Primary: 13D02; Secondary: }
\date{\today}


\theoremstyle{plain}
\newtheorem{theorem}{Theorem}[section]
\newtheorem{corollary}[theorem]{Corollary}
\newtheorem*{main}{Main Theorem}
\newtheorem{lemma}[theorem]{Lemma}
\newtheorem{proposition}[theorem]{Proposition}
\newtheorem{conjecture}[theorem]{Conjecture}
\newtheorem*{theoremA}{Theorem}
\newtheorem*{theoremB}{Theorem}

\theoremstyle{definition}
\newtheorem{definition}[theorem]{Definition}
\newtheorem{question}[theorem]{Question}

\theoremstyle{remark}
\newtheorem{notation}[theorem]{Notation}
\newtheorem{remark}[theorem]{Remark}
\newtheorem{example}[theorem]{Example}
\newtheorem{claim}{Claim}


\newcommand{\psp}[1]{{{\bf P}^{#1}}}
\newcommand{\psr}[1]{{\bf P}(#1)}
\newcommand{\op}{{\mathcal O}}
\newcommand{\opw}{\op_{\psr{W}}}
\newcommand{\go}{\op}
\newcommand{\ini}[1]{\text{in}(#1)}
\newcommand{\gin}[1]{\text{gin}(#1)}
\newcommand{\kr}{{\Bbbk}}
\newcommand{\kk}{{\Bbbk}}
\newcommand{\pd}{\partial}
\newcommand{\vardel}{\partial}
\renewcommand{\tt}{{\bf t}}


\newcommand{\coh}{{{\text{{\rm coh}}}}}


\newcommand{\modv}[1]{{#1}\text{-{mod}}}
\newcommand{\modstab}[1]{{#1}-\underline{\text{mod}}}

\newcommand{\sut}{{}^{\tau}}
\newcommand{\sumit}{{}^{-\tau}}
\newcommand{\til}{\thicksim}

\newcommand{\totp}{\text{Tot}^{\prod}}
\newcommand{\dsum}{\bigoplus}
\newcommand{\dprod}{\prod}
\newcommand{\lsum}{\oplus}
\newcommand{\lprod}{\Pi}
\newcommand{\sq}{\text{sq}\!-\!S}
\newcommand{\fsq}{\text{fsq}\!-\!S}
\newcommand{\smod}{{}^*\!\text{mod}\!-\!S}

\newcommand{\sqfu}{{\mathcal Sq}}

\newcommand{\La}{{\Lambda}}
\newcommand{\lam}{{\lambda}}
\newcommand{\GL}{{GL}}

\newcommand{\sirstj}{\circledast}

\newcommand{\she}{\EuScript{S}\text{h}}
\newcommand{\cm}{\EuScript{CM}}
\newcommand{\cmd}{\EuScript{CM}^\dagger}
\newcommand{\cmri}{\EuScript{CM}^\circ}
\newcommand{\cler}{\EuScript{CL}}
\newcommand{\clerd}{\EuScript{CL}^\dagger}
\newcommand{\clerri}{\EuScript{CL}^\circ}
\newcommand{\gor}{\EuScript{G}}
\newcommand{\gF}{\mathcal{F}}
\newcommand{\gG}{\mathcal{G}}
\newcommand{\gM}{\mathcal{M}}
\newcommand{\gE}{\mathcal{E}}
\newcommand{\gD}{\mathcal{D}}
\newcommand{\gI}{\mathcal{I}}
\newcommand{\gP}{\mathcal{P}}
\newcommand{\gK}{\mathcal{K}}
\newcommand{\gL}{\mathcal{L}}
\newcommand{\gS}{\mathcal{S}}
\newcommand{\gC}{\mathcal{C}}
\newcommand{\gO}{\mathcal{O}}
\newcommand{\gJ}{\mathcal{J}}
\newcommand{\gU}{\mathcal{U}}
\newcommand{\gX}{\mathcal{X}}
\newcommand{\mm}{\mathfrak{m}}

\newcommand{\dlim} {\varinjlim}
\newcommand{\ilim} {\varprojlim}

\newcommand{\CM}{\text{CM}}
\newcommand{\Mon}{\text{Mon}}


\newcommand{\Kom}{\text{Kom}}


\newcommand{\EH}{{\mathbf H}}
\newcommand{\res}{\text{res}}
\newcommand{\Hom}{\text{Hom}}
\newcommand{\inhom}{{\underline{\text{Hom}}}}
\newcommand{\Ext}{\text{Ext}}
\newcommand{\Tor}{\text{Tor}}
\newcommand{\ghom}{\mathcal{H}om}
\newcommand{\gext}{\mathcal{E}xt}
\newcommand{\id}{\text{{id}}}
\newcommand{\im}{\text{im}\,}
\newcommand{\codim} {\text{codim}\,}
\newcommand{\resol}{\text{resol}\,}
\newcommand{\rank}{\text{rank}\,}
\newcommand{\lpd}{\text{lpd}\,}
\newcommand{\coker}{\text{coker}\,}
\newcommand{\supp}{\text{supp}\,}
\newcommand{\Ad}{A_\cdot}
\newcommand{\Bd}{B_\cdot}
\newcommand{\dt}{{\bullet}}
\newcommand{\Fd}{F^\dt}
\newcommand{\Gd}{G^\dt}
\newcommand{\Xd}{X^\dt}
\newcommand{\Xdd}{X_\dt}
\newcommand{\omdot}{\omega_S^\cdot}


\newcommand{\sus}{\subseteq}
\newcommand{\sups}{\supseteq}
\newcommand{\pil}{\rightarrow}
\newcommand{\vpil}{\leftarrow}
\newcommand{\rpil}{\leftarrow}
\newcommand{\lpil}{\longrightarrow}
\newcommand{\inpil}{\hookrightarrow}
\newcommand{\pils}{\twoheadrightarrow}
\newcommand{\projpil}{\dashrightarrow}
\newcommand{\dotpil}{\dashrightarrow}
\newcommand{\adj}[2]{\overset{#1}{\underset{#2}{\rightleftarrows}}}
\newcommand{\mto}[1]{\stackrel{#1}\longrightarrow}
\newcommand{\vmto}[1]{\overset{\tiny{#1}}{\longleftarrow}}
\newcommand{\mtoelm}[1]{\stackrel{#1}\mapsto}

\newcommand{\eqv}{\Leftrightarrow}
\newcommand{\impl}{\Rightarrow}

\newcommand{\iso}{\cong}
\newcommand{\te}{\otimes}
\newcommand{\into}[1]{\hookrightarrow{#1}}
\newcommand{\ekv}{\Leftrightarrow}
\newcommand{\equi}{\simeq}
\newcommand{\isopil}{\overset{\cong}{\lpil}}
\newcommand{\equipil}{\overset{\equi}{\lpil}}
\newcommand{\ispil}{\isopil}
\newcommand{\vvi}{\langle}
\newcommand{\hvi}{\rangle}
\newcommand{\susneq}{\subsetneq}
\newcommand{\sgn}{\text{sign}}
\newcommand{\prikk}{\bullet}


\newcommand{\xd}{\check{x}}
\newcommand{\ortog}{\bot}
\newcommand{\tL}{\tilde{L}}
\newcommand{\tM}{\tilde{M}}
\newcommand{\tH}{\tilde{H}}
\newcommand{\tvH}{\widetilde{H}}
\newcommand{\tvh}{\widetilde{h}}
\newcommand{\tV}{\tilde{V}}
\newcommand{\tS}{\tilde{S}}
\newcommand{\tT}{\tilde{T}}
\newcommand{\tR}{\tilde{R}}
\newcommand{\tf}{\tilde{f}}
\newcommand{\ts}{\tilde{s}}
\newcommand{\tp}{\tilde{p}}
\newcommand{\tr}{\tilde{r}}
\newcommand{\tfst}{\tilde{f}_*}
\newcommand{\empt}{\emptyset}
\newcommand{\bfa}{{\bf a}}
\newcommand{\bfb}{{\bf b}}
\newcommand{\bfd}{{\bf d}}
\newcommand{\bfe}{{\bf e}}
\newcommand{\bfp}{{\bf p}}
\newcommand{\bfc}{{\bf c}}
\newcommand{\bfl}{{\bf t}}
\newcommand{\la}{\lambda}
\newcommand{\bfen}{{\mathbf 1}}
\newcommand{\ep}{\epsilon}
\newcommand{\en}{{\mathbf 1}}
\newcommand{\tu}{s}
\newcommand{\carc}{\mbox{char.}}

\newcommand{\ome}{\omega_E}

\newcommand{\bevis}{{\bf Proof. }}
\newcommand{\demofin}{\qed \vskip 3.5mm}
\newcommand{\nyp}[1]{\noindent {\bf (#1)}}
\newcommand{\demo}{{\it Proof. }}
\newcommand{\demodone}{\demofin}
\newcommand{\parg}{{\vskip 2mm \addtocounter{theorem}{1}  
                   \noindent {\bf \thetheorem .} \hskip 1.5mm }}

\newcommand{\lcm}{{\text{lcm}}}
\newcommand{\sA}{{\mathbb A}}
\newcommand{\sD}{{\mathbb D}}
\newcommand{\sAD}{\sA \circ \sD}


\newcommand{\dl}{\Delta}
\newcommand{\cdel}{{C\Delta}}
\newcommand{\cdelp}{{C\Delta^{\prime}}}
\newcommand{\dlst}{\Delta^*}
\newcommand{\Sdl}{{\mathcal S}_{\dl}}
\newcommand{\lk}{\text{lk}}
\newcommand{\lkd}{\lk_\Delta}
\newcommand{\lkp}[2]{\lk_{#1} {#2}}
\newcommand{\del}{\Delta}
\newcommand{\delr}{\Delta_{-R}}
\newcommand{\dd}{{\dim \del}}

\renewcommand{\aa}{{\mathbf a}}
\newcommand{\bb}{{\mathbf b}}
\newcommand{\cc}{{\mathbf c}}
\newcommand{\xx}{{\mathbf x}}
\newcommand{\yy}{{\mathbf y}}
\newcommand{\zz}{{\mathbf z}}
\newcommand{\rr}{{\mathbf r}}
\newcommand{\bA}{{\mathbf A}}

\newcommand{\mv}{{\xx^{\aa_v}}}
\newcommand{\mF}{{\xx^{\aa_F}}}

\newcommand{\pnm}{{\mathbf P}^{n-1}}
\newcommand{\opnm}{{\go_{\pnm}}}
\newcommand{\ompnm}{\omega_{\pnm}}

\newcommand{\pn}{{\mathbf P}^n}
\newcommand{\hele}{{\mathbb Z}}
\newcommand{\nat}{{\mathbb N}}
\newcommand{\rasj}{{\mathbb Q}}

\newcommand{\st}{\hskip 0.5mm {}^{\rule{0.4pt}{1.5mm}}}              
\newcommand{\disk}{\scriptscriptstyle{\bullet}}
\newcommand{\sirk}{\circ}

\newcommand{\cF}{F_\dt}
\newcommand{\pol}{f}

\newcommand{\disc}{\circle*{5}}

\newcommand{\tri}{\Delta}
\newcommand{\hal}{\hat{\alpha}}
\newcommand{\hbe}{\hat{\beta}}
\newcommand{\hga}{\hat{\gamma}}
\newcommand{\ovpil}[1]{\overset{\pil}{#1}}
\newcommand{\pwov}{{\mathbb P}(\overset{\pil}{W})}
\newcommand{\trs}[1]{\overline{#1}}
\newcommand{\Sym}{\mbox{Sym}}
\newcommand{\gQ}{\mathcal Q}
\newcommand{\gR}{\mathcal R}
\newcommand{\Spec}{\mbox{Spec}}
\newcommand{\rk}{\mbox{rk}}

\def\CC{{\mathbb C}}
\def\GG{{\mathbb G}}
\def\ZZ{{\mathbb Z}}
\def\NN{{\mathbb N}}
\def\RR{{\mathbb R}}
\def\OO{{\mathbb O}}
\def\QQ{{\mathbb Q}}
\def\VV{{\mathbb V}}
\def\PP{{\mathbb P}}
\def\EE{{\mathbb E}}
\def\FF{{\mathbb F}}
\def\AA{{\mathbb A}}
\def\DD{{\mathbb D}}
\def\AD{{\AA \circ \DD}}

\begin{abstract}
On the category of bounded complexes of finitely generated
free squarefree modules over 
the polynomial ring $S$, there
is the standard duality functor $\DD = \Hom_S(-, \omega_S)$ and
the Alexander duality functor $\AA$. The composition $\AD$ is an endofunctor
on this category, of order three
up to translation. We consider complexes $F^\dt$ of free squarefree modules
such that both $F^\dt, \AD(F^\dt)$ and $(\AD)^2(F^\dt)$ are pure, when
considered as singly graded complexes. We conjecture i) the existence
of such triplets of complexes for given triplets of degree sequences, 
and ii) the uniqueness of their Betti numbers, up to scalar multiple.
We show that this uniqueness follows from the existence, and
we construct such triplets if two of them are linear.
\end{abstract}
\maketitle

\section*{Introduction}

\noindent{\bf Pure free resolutions} are
free resolutions over the polynomial ring $S$ of the form
\[ S(-d_0)^{\beta_0} \vpil S(-d_1)^{\beta_1} \vpil \cdots
\vpil S(-d_r)^{\beta_r}.\]
Their Betti diagrams have proven to be of fundamental importance
in the study of Betti diagrams of graded modules over the polynomial 
ring. Their significance were put to light by the Boij-S\"oderberg
conjectures, \cite{BS}. The existence of pure resolutions
were first proven by D.Eisenbud, the author, and J.Weyman in 
\cite{EFW} in characteristic zero, and by D.Eisenbud and F.-O.Schreyer
in all characteristics, \cite{ES}. Later the methods of 
\cite{ES} were made more explicit and put into a larger framework, 
called tensor complexes, by C.Berkesch et.al. \cite{BEKS2}.

The Boij-S\"oderberg conjectures, settled in full generality in \cite{ES},
concerns the stability theory of {\it Betti diagrams} of graded
modules, i.e. it describes such diagrams up to multiplication by a
positive rational number, or alternatively the positive rational
cone generated by such diagrams. The Betti diagrams of pure resolutions are exactly the
extremal rays in this cone. Two introductory papers on this theory
are \cite{FlIntro} and \cite{ESBeSy}.

\medskip
\noindent{\bf Homological invariants.} The Betti diagram
is however only part of the story when it comes to homological invariants
of graded modules. A complex $F^\dt$ of free modules over the 
polynomial ring $S$, for instance a free resolution, comes with
three sets of numerical homological invariants:
\begin{itemize}
\item $B$: The { graded
Betti numbers} $\{\beta_{ij}\}$,
\item $H$: The Hilbert functions of the { homology modules} $H^i(F^\dt)$,
\item $C$: The Hilbert functions of the { cohomology modules}. These
modules are the 
homology modules of the dual complex $\Hom_S(F^\dt, \omega_S)$, where
$\omega_S$ is the canonical module.
\end{itemize}
It is then natural to approach the stability theory 
of the triplet data set $(B,H,C)$: Up to rational multiple, what
triplets of such can occur? 
The recent article \cite{EE-Cat} has partial results in this direction.
It describes the Betti diagrams of complexes $F^\dt$ with 
specified nondecreasing codimensions of the homology modules. 
We do not here investigate the question above directly,
but we believe the following will be of relevance.

\medskip
\noindent{\bf Squarefree modules.} 
The notion of pure resolution or pure complex, has a very natural
extension into {\it triplets of pure complexes}, in the setting of 
squarefree modules over the polynomial ring. 
Squarefree modules are $\NN^n$-graded modules over the polynomial ring
$S = \kk[x_1, \ldots, x_n]$ and form a module category including
squarefree monomial ideals, and Stanley-Reisner rings.
Both the category of singly graded $S$-modules as well as 
squarefree $S$-modules, have the standard duality functor 
$\DD = \Hom_S(-, \omega_S)$. However for squarefree modules
there is also another duality functor, Alexander duality $\AA$.
The composition $\AD$ becomes an endofunctor
on the category of bounded complexes of finitely generated free
squarefree $S$-modules. (This is in fact the Auslander-Reiten
translate on the derived category of complexes of squarefree modules, see
\cite{BrFl}.) There are two amazing facts concerning this endofunctor.
\begin{itemize}
\item The third iterate $(\AD)^3$ is isomorphic to the $n$'th iterate
of the translation functor on complexes, a result of K.Yanagawa, \cite{Ya}.
\item The composition functor cyclically rotates the homological
invariants: If $F^\dt$ has homological invariants $(B,H,C)$ then
\begin{itemize}
\item $\AD(F^\dt)$ has homological invariants $(H,C,B)$, and
\item $(\AD)^2(F^\dt)$ has homological invariants $(C,B,H)$. 
\end{itemize}
This is also implicit in \cite{Ya}.
\end{itemize}
Thus the various homology modules of $F^\dt$ are transferred
to the various linear strands of $\AD(F^\dt)$ and the cohomology
modules of $F^\dt$ are transferred to the linear strands of 
$(\AD)^2(F^\dt)$. 

\medskip
\noindent{\bf The main idea} of this paper is to
consider complexes $F^\dt$ of free squarefree modules such that
(when considered as singly graded modules)
\begin{itemize}
\item $F^\dt$ is pure,
\item $\AD(F^\dt)$ is pure, 
\item $(\AD)^2(F^\dt)$ is pure.
\end{itemize}
We call this a {\it triplet of pure complexes}. 
That
$F^\dt$ is a pure resolution of a Cohen-Macaulay squarefree module, 
the classical case, corresponds
to
\begin{itemize}
\item $F^\dt$ is pure,
\item $\AD(F^\dt)$ is linear, 
\item $(\AD)^2(F^\dt)$ is linear.
\end{itemize}

\medskip
\noindent{\bf Construction of triplets.} Squarefree complexes are
$\ZZ^n$-graded, or equivalently they are equivariant for the action
of the diagonal matrices of $GL(n)$. That pure resolutions 
come with various
group actions is the rule in the various constructions we have,
\cite{EFW}, \cite{BEKS2}. S.Sam and J.Weyman pursue this \cite{SW}
in the context of other linear algebraic groups. 
However being squarefree is something more than being $\ZZ^n$-graded.
In particular for a squarefree complex $F^\dt$ it may happen that
the only multidegree $\bb$ such that $F^\dt(-\bb)$ is squarefree,
is the zero degree.  
It is therefore a priori not clear, even in the classical case, 
how to construct such complexes $F^\dt$. 
As it turns out the tensor complexes of \cite{BEKS2} 
make the perfect 
input for a construction, see in particular Remark \ref{KonRemTensor}.
These tensor complexes are
over a large polynomial ring $S(V \te W_0^* \te \cdots \te W_{r+1}^*)$. 
Letting $V$ be the linear space $\langle x_1, \ldots, x_n \rangle$
and taking a general map
\[ V \te W_0^* \te \cdots \te W_{r+1}^* \pil V, \]
equivariant for the diagonal matrices
in $GL(n)$,
we may construct all cases of complexes $F^\dt$ corresponding to 
the classical case,
Theorem \ref{KonTeoremHoved}. 
The existence of triplets of pure complexes in full generality, we
state as Conjecture \ref{PurecxConj}. In
a subsequent paper, \cite{FlZip}, we transfer this to a conjecture
on the existence of certain complexes of coherent sheaves on 
projective spaces.

\medskip
\noindent{\bf Uniqueness of Betti numbers.} In the classical case
the singly graded Betti numbers of $F^\dt$ (and also of 
the linear complexes $\AD (F^\dt)$ and $(\AD)^2(F^\dt)$ are uniquely determined
up to scalar multiple, by the degree sequence of $F^\dt$.

It now turns out that for a triplet of pure complexes, given the
degree sequences of each of the three complexes, the Betti numbers
fulfill a number of homogeneous linear equations which is one less than 
the number of variables, i.e. the number of Betti numbers. 
We thus expect there to
be a unique solution up to common rational multiple. Under
the assumption that triplets of pure complexes exists (for all 
triplet of degree sequences fulfilling a simple necessary criterion),
we show that the Betti numbers are uniquely determined up to 
common rational multiple, Theorem \ref{NumTeoremAB} 

\medskip
\noindent{\bf Pure resolutions in the squarefree setting} have previously
been considered by W.Bruns and T.Hibi for Stanley-Reisner rings.
In \cite{BHSr} they describe all possible degree sequences 
$0 = d_0, d_1, \ldots$
for pure resolutions of Stanley-Reisner rings with $d_1 = 2$, 
and classify the simplicial complexes where this occurs.
When $d_1 = 3$ they give a thorough investigation of possible degree
sequences and the possible simplicial complexes, as
well as interesting examples when $d_1 \geq 4$. They also give a complete
classification of simplicial complexes where $d_1 = m$ and $d_2 = 2m-1$
for $m \geq 2$.  
In \cite{BHCM} they
classify Cohen-Macaulay posets where the Stanley-Reisner 
ring of the order complex
has pure resolution. 
In \cite{FlEnrich} the author considers {\it Cohen-Macaulay designs} which
in the language of the present article correspond to
Cohen-Macaulay Stanley-Reisner rings
with pure resolution and exactly three linear strands (so the 
Stanley-Reisner ideal has exactly two linear strands).
Examples of such are cyclic polytopes and Alexander duals of Steiner
systems.

However from the perspective of the present article, 
approaches in those
directions are severely hampered by the fact that only for
few degree sequences, by simple numerical considerations, 
can one hope that the first Betti number
$\beta_0$ may be chosen to be $1$. 
For degree sequences where this
value may be achieved these articles also testify to
the difficulty in constructing pure resolutions of Stanley-Reisner rings.
Our construction avoids the restriction $\beta_0 = 1$, rather making
$\beta_0$ large.

\medskip
\noindent{\bf Organization of article.}
In Section \ref{SetSec} we give the setting of squarefree modules
and the functors $\AA$ and $\DD$. We show that they rotate the 
homological invariants of squarefree complexes.
In Section \ref{PurecxSec} we develop the basic theory of triplets
of pure complexes. We find a basic necessary condition, the balancing
condition, on the
triplet of degree sequences of such complexes. We conjecture
the existence of triplets of pure complexes 
for all balanced triplets of degree sequences,
and the uniqueness of their Betti numbers, up to common scalar multiple,
Conjecture \ref{PurecxConj}.
In Section \ref{NumSec} we show this uniqueness of Betti numbers, 
under the assumption that
triplets of pure complexes do exist. In Section \ref{KonSec}
we use the tensor complexes of \cite{BEKS2} to construct 
triplets of pure complexes $F^\dt, \AD(F^\dt)$ and $(\AD)^2(F^\dt)$
when the last two complexes are linear.

\section{Duality functors and rotation of homological invariants}
\label{SetSec}

In this section we recall the notion of a squarefree module over the
polynomial ring $S = \kk[x_1, \ldots, x_n]$, and the two duality
functors we may define on the category of complexes of such modules, 
standard duality $\sD$ and Alexander duality $\sA$. 

A striking result of K.Yanagawa \cite{Ya}, says that the composition 
$(\sA \circ \sD)^3$ is naturally equivalent to the $n$'th iterate of the
translation functor
on the derived category of squarefree modules. A complex of
squarefree modules comes with three sets of homological invariants, 
the multigraded homology and cohomology modules, and the multigraded
Betti spaces. We show that $\sA \circ \sD$ cyclically rotates these
invariants (which is a rather well known fact to experts).

\subsection{Squarefree modules and dualities}
\label{SetSubsecSqf}

Let $S$ be the polynomial ring $\kk[x_1, \ldots, x_n]$ where $\kk$ is a field. 
Let $\ep_i$ be the $i$'th coordinate vector in $\nat^n$.
An $\nat^n$-graded $S$-module is called {\it squarefree}, introduced
by K.Yanagawa in \cite{YaSqf}, if $M$ is finitely
generated and the multiplication map 
$M_{\bb} \mto{\cdot x_i} M_{\bb + \ep_i}$ is an isomorphism of vector spaces
whenever the $i$'th coordinate $b_i \geq 1$. 
We denote the category of finitely generated squarefree $S$-modules by $\sq$.

There is a one-one correspondence between subsets 
$R \sus [n] = \{1,2,\ldots,n\}$ and
multidegrees $\rr$ in $\{0,1\}^n$, by letting $R$ be the set of coordinates
of $\rr$ equal to $1$. By abuse of notation we shall often write $R$ when
strictly speaking we mean $\rr$. For instance the degree $\rr$ part
of $M$, which is, $M_{\rr}$ may be written $M_R$.
Also if $R$ is a set we shall if no confusion arises, denote its
cardinality by the smaller case letter $r$.
We also denote $(1,1,\ldots,1)$ as $\en$. Take note that
a squarefree module is completely determined, up to isomorphism,
by the graded pieces $M_R$ and the multiplication maps between them
\[ M_R \mto{x_v} M_{R \cup \{ v\}} \]
where $v \not \in R$. 

If $M$ is a squarefree module and $0 \leq d \leq n$, its {\it squarefree
part of degree $d$} is 
\[ \bigoplus_{|R| = d} M_R. \]
Note that taking squarefree parts is an exact functor from squarefree
modules to vector spaces. In particular note that the squarefree part
of $S(-\bb)$ in degree $d$ has dimension $\binom{n-|\bb|}{d - |\bb|}
= \binom{n - |\bb|}{n-d}$. 

For a squarefree module $M$ there is a notion of Alexander
dual module $\bA(M)$, defined by T.R{\"o}mer \cite{RoAlex}
and E.Miller \cite{MiAlex}. For $R$ a subset of $[n]$, let $R^c$ be its complement. 
Then $\bA(M)_R$ is the dual $\Hom_\kk(M_{R^c},k)$.
If $v$ is not in $R$ the multiplication
\[ \bA(M)_R \mto{\cdot x_v} \bA(M)_{R \cup \{v\}}  \]
is the dual of the multiplication
\[ M_{(R\cup \{v\})^c} \mto{\cdot x_v} M_{R^c}. \]
By obvious extension this defines $\bA(M)_\bb$ for all $\bb$ in $\nat^n$
and all multiplications.

\begin{example}
If $S = \kk[x_1, x_2, x_3, x_4]$ then the Alexander dual of $S(-(1,0,1,1))$
is $S/(x_1, x_3, x_4)$. The Alexander dual of $S(- \en)$ is the simple
quotient module $k$. 
\end{example}

For a multidegree $\bb$ in $\hele^n$, the free $S$-module $S(-\bb)$
is a squarefree module if and only if  $\bb \in \{0,1\}^n$, i.e.
all coordinates of $\bb$ are $0$ or $1$. 
Direct sums of such modules are the {\it free squarefree $S$-modules}. 
Denote by $\fsq$ the category of finitely generated such modules.

Let $C^b(\sq)$ and $C^b(\fsq)$ be the categories of bounded complexes
of finitely generated squarefree, resp. free squarefree modules.
There is a natural duality 
\[ \DD: C^b(\fsq)  \lpil C^b(\fsq) \]
defined by 
\[ \DD(F^\dt) = \Hom_S(F^\dt, S(-\en)), \] so in 
particular $\DD(S(-\bb)) = S(\bb - \en)$. 
We would also like to define Alexander duality on the category
$C^b(\fsq)$. 
However there is a slight problem in that Alexander duality
as defined above does not take free modules to free modules.

To remedy this, any bounded complex of squarefree modules $X^\dt$ has a minimal
resolution $F^\dt \pil X^\dt$ by free squarefree modules.
This defines a functor $\res: C^b(\sq) \pil C^b(\fsq)$. 
(There is of course also a natural inclusion $\iota: C^b(\fsq) \pil C^b(\sq)$.) 
We now define Alexander duality
\[ \AA: C^b(\fsq) \pil C^b(\fsq) \]
by letting $\AA$ be the composition $\res \circ \bA$ where
$\bA$ is the Alexander duality defined above.

\begin{example}
Continuing the example above, a free resolution of $S/(x_1, x_2, x_4)$
is 
\begin{equation*}
\underset{\scriptsize{\begin{matrix} (0,0,0,0) \end{matrix}}} 
{S} \longleftarrow 
\underset{\scriptsize{\begin{matrix} (1,0,0,0)\\ (0,0,1,0) \\ (0,0,0,1) 
\end{matrix}}} {S^3} \longleftarrow
\underset{\scriptsize{\begin{matrix} (1,0,1,0) \\ (1,0,0,1) \\ (0,0,1,1)
 \end{matrix}}} {S^3} \longleftarrow
\underset{\scriptsize{\begin{matrix} (1,0,1,1) \end{matrix}}} 
{S} 
\end{equation*}
where we have written below the multidegrees of the generators.
Then the Alexander dual $\AA (S(-(1,0,1,1))$ is the above resolution.
\end{example}

By composing with the resolution 
we may also consider $\AA$ and $\DD$ as functors on $C^b(\sq)$
\[ C^b(\sq) \mto{\res} C^b(\fsq) \mto{\AA, \DD} C^b(\fsq). \]

For a complex $X^\dt$ let $X^\dt[-p]$ be its $p$'th cohomological translate,
i.e. $(X^\dt[-p])^q = X^{q-p}$. Yanagawa, \cite{Ya},
shows that $(\AA \circ \DD)^3$ is isomorphic to the $n$'th iterate $[n]$ 
of the translation functor.

\subsection{Homological invariants}


The complex $X^\dt$ comes with three sets of squarefree homological 
invariants. First there is the homology 
\[ H^i_R(X^\dt) := (H^i(X^\dt))_R \]
where $i \in \hele$ and $R \sus [n]$. 
For a vector space $V$, denote by $V^*$ its dual $\Hom_\kk(V,\kk)$.
We define the cohomology as 
\[ C^i_R(X^\dt) := (H^{-i}(\sD (X^\dt))_{R^c})^*. \]
Note that by local duality, if $X^\dt$ is a module $M$, then
this relates to local cohomology by 
\[C^{i-n}_R(M) = H^i_{\frak m}(M)_{\rr - \en} \]
where $\rr$ is the $0,1$-vector with support $R$.
Thirdly a minimal free squarefree resolution $F^\dt$ of $X^\dt$ 
has terms which may be
written $F_i = \oplus_{R\sus[n]} S \te_\kk B^i_R$ and we define the Betti spaces 
to be 
\[ B^i_R(X^\dt) := (Tor_i^S(X^\dt,k)_R) = (B^i_R). \]

\medskip
Now a basic and very interesting fact is that the functors $\sA$ and
$\sD$ interchange the homology, cohomology, and Betti spaces.
First we consider $\DD$.

\begin{lemma}
The functor $\sD$ interchanges the homological invariants of $X^\dt$ as
follows.
\begin{itemize}
\item $B^i_R(\sD (X^\dt)) = B^{-i}_{R^c}(X^\dt)^*$.
\item $H^i_R(\sD (X^\dt)) = C^{-i}_{R^c}(X^\dt)^*$.
\item $C^i_R(\sD (X^\dt)) = H^{-i}_{R^c}(X^\dt)^*$.
\end{itemize}
\end{lemma}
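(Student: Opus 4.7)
The plan is to derive all three identities as direct consequences of two basic facts: the self-duality $\DD \circ \DD \simeq \mathrm{id}$ on $C^b(\fsq)$, together with the computation $\Hom_S(S(-R),S(-\en))=S(R-\en)=S(-R^c)$ that underlies it. Throughout, let $F^\dt \to X^\dt$ be a minimal free squarefree resolution, written in cohomological degree $-i$ as $F^{-i}=\bigoplus_{R} S(-R)\otimes_\kk B^{-i}_R(X^\dt)$, so that $\DD(X^\dt)$ is realized as the free squarefree complex $\DD(F^\dt)$.

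For the first identity, I apply $\Hom_S(-,S(-\en))$ summand-by-summand. Each $S(-R)\otimes B^{-i}_R(X^\dt)$ in $F^{-i}$ contributes $S(-R^c)\otimes B^{-i}_R(X^\dt)^*$ to $\DD(F^\dt)^i$. Because $F^\dt$ is minimal, so is $\DD(F^\dt)$ (its differentials sit in $\mm$), hence the hyper-Tor reduces to reading off summands: the multidegree $R'$ piece in position $i$ has dimension equal to the dimension of $B^{-i}_{(R')^c}(X^\dt)$. This gives $B^i_{R'}(\DD(X^\dt))=B^{-i}_{(R')^c}(X^\dt)^*$, as claimed.

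For the second identity, I simply specialize the definition of cohomology. Using $C^j_R(Y^\dt)=(H^{-j}(\DD(Y^\dt))_{R^c})^*$ with $Y^\dt=X^\dt$, $j=-i$, and $R$ replaced by $R^c$, I obtain
\[
C^{-i}_{R^c}(X^\dt)=\bigl(H^{i}(\DD(X^\dt))_{R}\bigr)^{*}=H^i_R(\DD(X^\dt))^*,
\]
and dualizing yields the desired formula. No further argument is required: the statement is essentially the definition of $C$ read in the opposite direction.

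For the third identity, I invoke $\DD^2\simeq \mathrm{id}$ on $C^b(\fsq)$, a purely formal consequence of $\Hom_S(S(-R^c),S(-\en))=S(-R)$ applied termwise. Then
\[
C^i_R(\DD(X^\dt))=\bigl(H^{-i}(\DD^2(X^\dt))_{R^c}\bigr)^{*}=\bigl(H^{-i}(F^\dt)_{R^c}\bigr)^{*}=H^{-i}_{R^c}(X^\dt)^{*}.
\]
The whole argument is bookkeeping; the only place one must be careful is the interpretation of Betti spaces for a free complex that is not a resolution in step one. Minimality of $\DD(F^\dt)$ is what saves the day, and is the one substantive point worth verifying explicitly.
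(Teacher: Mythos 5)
Your proof is correct, and it is essentially the paper's argument: the paper dismisses this lemma with ``This is clear,'' and what you have written is exactly the definitional bookkeeping being taken for granted --- $\Hom_S(S(-R),S(-\en))=S(-R^c)$ applied to a minimal resolution (noting $\DD$ preserves minimality, so the Betti spaces can be read off termwise), the definition of $C$ for the second identity, and $\DD^2\simeq\mathrm{id}$ on free complexes for the third.
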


\begin{proof}
This is clear.
\end{proof}

Before describing how the functor $\sA$ interchanges the homological
invariants, we recall a basic fact from \cite{Ya}.
For a square-free module $M$, one may define a complex $\gL(M)$ (see
\cite[p.9]{Ya} where it is denoted by $\gF(M)$) by 
\[\gL^i(M) = \bigoplus_{|R| = i} (M_R)^\circ \te_\kk S\] 
where $(M_R)^\circ$ is $M_R$ but considered to have multidegree
$R^c$. The 
differential is 
\[ m^\circ \te s \mapsto \sum_{j \not \in R} (-1)^{\alpha(j,R)} (x_jm)^\circ
\te x_js \]
where $\alpha(j,R)$ is the number of $i$ in $R$ such that $i < j$.

 For a minimal complex $F^\dt$ of free squarefree $S$-modules define its 
{\it $i$'th linear strand
$F^\dt_{\langle i \rangle}$} to have terms 
\[ F^{j}_{\langle i \rangle} = \bigoplus_{|R| = i-j} S \te_\kk B^j_R. \]
Since $F^\dt$ is minimal, the $i$'th linear strand is naturally
a complex.
The following is \cite[Thm. 3.8]{Ya}.

\begin{proposition} \label{HexProDA} The $i$'th linear strand of 
$\sA \circ \sD (X^\dt)$ is
\[ \gL(H^i(X^\dt))[n-i].\]
\end{proposition}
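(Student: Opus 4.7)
\emph{Proof plan.} The plan is to realize $\sA \circ \sD (X^\dt)$ as the total complex of a bicomplex built functorially from $\gL$, and then to read off its linear strands after minimization. First I would verify the claim for $X^\dt = S(-R)$ concentrated in cohomological degree $0$: direct computation gives $\sD(S(-R)) = S(-R^c)$ and $\bA(S(-R^c)) = S/(x_j : j \in R^c)$, whose minimal free resolution is the Koszul complex on the variables $\{x_j : j \in R^c\}$. Matching generators and differentials term by term, this Koszul complex coincides with $\gL(S(-R))[n]$, which is $\gL(H^0(X^\dt))[n]$ since $H^0 = S(-R)$ and all other cohomology vanishes.

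For a general complex $F^\dt \in C^b(\fsq)$ I would apply $\gL$ termwise to obtain a bicomplex $\mathcal{B}^{p,q} = \gL^q(F^p)$ with vertical differential $d_\gL$ and horizontal differential $\gL(d_F)$. By the base case applied summand by summand, the shifted total complex $\mathrm{Tot}(\mathcal{B})[n]$ is a free squarefree resolution of $\bA \circ \sD (F^\dt)$ and hence represents $\sA \circ \sD (F^\dt)$ in the derived category. A bigrading check shows that in $\mathcal{B}^{p,q}$ each generator has multidegree of weight $n - q$ and, after the shift, total cohomological position $p + q - n$; the slope $(n - q) + (p + q - n) = p$ places column $p$ of $\mathcal{B}$ entirely in the $p$-th linear strand. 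The vertical differential $d_\gL$ is linear in the $x_j$'s and strand-preserving, so only the horizontal differentials $\gL(d_F)$ can connect distinct columns.

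The final step is to minimize the total complex. At each squarefree multidegree $T$ the $T$-isotypic part of $\mathcal{B}$ factors as $V^\dt_T \otimes_\kk S(-T^c)$, where $V^\dt_T$ is the degree-$T$ graded piece of $F^\dt$ viewed as a complex of vector spaces. The horizontal differential acts on this piece as $(d_F)_T$ on $V^\dt_T$ tensored with the identity on $S(-T^c)$, so Gauss elimination within each $T$ collapses $V^\dt_T$ to its cohomology $H^\dt(F^\dt)_T$, compatibly with the vertical $d_\gL$ that intertwines different $T$'s. After this collapse, column $p$ of the minimized complex has row $q$ equal to $\bigoplus_{|T| = q} H^p(F^\dt)_T \otimes_\kk S(-T^c) = \gL^q(H^p(F^\dt))$, yielding $\gL(H^p(F^\dt))[n - p]$ as the $p$-th linear strand. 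The main obstacle is precisely this minimization step, namely justifying that the Gauss eliminations at each $T$ proceed compatibly with the vertical differentials; this amounts to showing that the spectral sequence of $\mathcal{B}$ with horizontal cohomology taken first degenerates at $E_2$, which follows from the exactness of each functor $\gL^q$ on squarefree modules (since $M \mapsto M_T$ is exact) combined with the columnwise slope compatibility above.
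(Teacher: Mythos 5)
The paper offers no internal proof of this proposition: it is quoted as \cite[Thm.\ 3.8]{Ya}, so the only comparison available is with the natural argument behind Yanagawa's result, and your outline follows exactly that route. Your base case is right ($\sD(S(-R))=S(-R^c)$, $\bA(S(-R^c))=S/(x_j:j\in R^c)$, whose Koszul resolution is $\gL(S(-R))[n]$), the passage to a termwise bicomplex $\mathcal{B}^{p,q}=\gL^q(F^p)$ with $\mathrm{Tot}(\mathcal{B})[n]$ representing $\sA\circ\sD(F^\dt)$ is sound (modulo a word on naturality of the augmentations, which makes the columns a resolution of the complex $\bA\circ\sD(F^\dt)$ -- this is in effect the lemma the paper proves later in Section 2), and the strand bookkeeping $(n-q)+(p+q-n)=p$ is correct, as is the observation that the horizontal differential $\gL^q(d_F)=(d_F)_T\otimes\mathrm{id}$ has scalar entries and is therefore the entire non-minimal part.

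The gap is in your justification of the final minimization step. Identifying the $p$-th linear strand of the \emph{minimal} complex is a statement about the minimal free model, not about its cohomology, so no convergence statement for the spectral sequence of $\mathcal{B}$ can deliver it; moreover, exactness of $\gL^q$ only computes $E_1^{p,q}=\gL^q(H^p(F^\dt))$ and gives no degeneration at $E_2$ (which is neither needed nor true in general). What you actually need is: (i) the Betti spaces, which follow from homotopy invariance by applying $-\otimes_S\kk$ (only $d_h$ survives, giving $H^p(F^\dt)_T$ in multidegree $T^c$, position $p+|T|-n$); and (ii) the identification of the linear part of the minimized differential. For (ii), choose for each $T$ a splitting $F^p_T=\tilde H^p_T\oplus B^p_T\oplus C^p_T$ with $(d_F)_T:C\xrightarrow{\ \sim\ }B$ and perform Gaussian elimination on the scalar piece. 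Since $\pi d_h=0$ and $d_h\iota=0$ for the inclusion/projection of $\tilde H$, every correction term $\pi D s D\cdots s D\iota$ begins and ends with the linear differential $d_\gL$, hence has entries of degree at least $2$; so the linear part of the minimal differential is exactly $\pi\, d_\gL\, \iota$, and since $d_F$ is $S$-linear this is the multiplication $[h]\mapsto\sum_j\pm[x_jh]$ on $H^p(F^\dt)$, i.e.\ the differential of $\gL(H^p(F^\dt))$. With that degree argument in place of the spectral-sequence claim, your proof is complete.
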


This gives the following.

\begin{lemma} The functor $\sA$ interchanges the homological invariants of 
$X^\dt$ as follows (denoting the cardinality of $R$ by $r$).
\begin{itemize}
\item[a.] $B^i_R(\sA (X^\dt)) = C^{-i-r}_{R}(X^\dt)^*$.
\item[b.] $H^i_R(\sA (X^\dt)) = H^{-i}_{R^c}(X^\dt)^*$.
\item[c.] $C^i_R(\sA (X^\dt)) = B^{-i-r}_{R}(X^\dt)^*$.
\end{itemize}
\end{lemma}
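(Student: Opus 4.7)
My plan is to derive (a), (b), (c) by combining Proposition \ref{HexProDA} with the preceding lemma on $\sD$, plus one additional observation about $\sA$ itself. In outline: (b) is immediate from the definition of $\sA$; (a) is a direct reading of Proposition \ref{HexProDA} applied to $\sD X^\dt$; and (c) reduces to (a) via the fact that $\sA$ is involutive up to quasi-isomorphism.

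For (b), by construction $\sA = \res \circ \bA$, where $\bA$ is the contravariant exact functor on squarefree modules defined by $\bA(M)_R = (M_{R^c})^*$. Exactness and the fact that $\res$ is a quasi-isomorphism give $H^i_R(\sA X^\dt) = \bA(H^{-i}(X^\dt))_R = H^{-i}_{R^c}(X^\dt)^*$. For (a), apply Proposition \ref{HexProDA} with $X^\dt$ replaced by $\sD X^\dt$; since $\sD^2 = \id$, the $i$'th linear strand of $\sA X^\dt = \sA \sD(\sD X^\dt)$ equals $\gL(H^i(\sD X^\dt))[n-i]$. By the definition of $\gL$, the multidegree-$R$ generators of $\gL^k(M)$ form the space $M_{R^c}$ and occur exactly when $|R^c| = k$; matching with cohomological position $j$ in the shifted strand gives $k = j + n - i$ and $|R| = i - j$, so $B^j_R(\sA X^\dt) = H^{j+r}(\sD X^\dt)_{R^c}$. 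The $\sD$-lemma then rewrites this as $C^{-j-r}_R(X^\dt)^*$.

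For (c), the crucial extra ingredient is that $\sA^2 \cong \id$ on the bounded derived category of squarefree modules. This follows from $\bA^2 = \id$ (Alexander duality on squarefree modules is involutive) together with the fact that $\res$ produces a complex quasi-isomorphic to its argument: $\sA^2 X^\dt = \res \bA \res \bA X^\dt$ is quasi-isomorphic to $\bA^2 X^\dt = X^\dt$, so the two have the same minimal representative and hence the same Betti spaces. Applying (a) with $X^\dt$ replaced by $\sA X^\dt$ now yields $B^j_R(X^\dt) = B^j_R(\sA^2 X^\dt) = C^{-j-r}_R(\sA X^\dt)^*$, and dualizing and substituting $i = -j-r$ gives (c).

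The main obstacle is the careful verification of $\sA^2 \cong \id$ (equivalently, the invariance of Betti spaces under $\sA^2$), which is a short but non-trivial derived-category observation. Should one wish to bypass it, an equivalent self-contained route is to iterate the two rules $B^j_R(\sA \sD Y^\dt) = H^{j+r}_{R^c}(Y^\dt)$ (from Proposition \ref{HexProDA}) and $H^j_R(\sA \sD Y^\dt) = C^j_R(Y^\dt)$ (from (b) and the $\sD$-lemma) three times, and then invoke Yanagawa's $(\sA \sD)^3 \cong [n]$ to extract $C^i_R(\sA \sD Y^\dt) = B^{i+r}_{R^c}(Y^\dt)$; specializing to $Y^\dt = \sD X^\dt$ and applying the $\sD$-lemma once more recovers (c).
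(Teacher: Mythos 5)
Your proof is correct and follows essentially the same route as the paper: part b.\ directly from the definition of $\sA$, part a.\ by applying Proposition \ref{HexProDA} to $\sD X^\dt$ (using $\sD^2\cong\id$) and translating via the $\sD$-lemma, and part c.\ by substituting $\sA X^\dt$ into part a. The only difference is that you spell out the involutivity $\sA^2\cong\id$ (via $\bA^2=\id$ and $\res$ being a quasi-isomorphism), which the paper uses implicitly in the same step.
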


\begin{proof} Part b. is clear.
By the proposition above 
\[\gL(H^i(\sD (X^\dt)))[n-i] \iso \sA(X^\dt)_{\langle i \rangle}. \]
The first complex has terms which are direct sums over $R$ of 
\[ H^i(\DD(X^\dt)_R)^\circ \te_\kk S\]
where the generating space has internal degree $R^c$ and 
is in cohomological position $r+i-n = i - |R^c|$.
The generating space here is 
\[ (C^{-i}(X^\dt)_{R^c}^*)^\circ. \]
Hence this equals
\[ B^{i-|R^c|}_{R^c}(\AA(X^\dt)),\]
which is equivalent to part a.

 Part c. follows from a. by replacing 
$X^\dt$ by $\sA(X^\dt)$. 
\end{proof}

Putting these two lemmata together we get the following.

\begin{corollary}
The composition $\sA \circ \sD$ cyclically rotates the homological
invariants as follows.
\begin{itemize}
\item $B^i_R(\sA \circ \sD (X^\dt)) = H^{i+r}_{R^c}(X^\dt)$.
\item $H^i_R(\sA \circ \sD (X^\dt)) = C^{i}_{R}(X^\dt)$.
\item $C^i_R(\sA \circ \sD (X^\dt)) = B^{i+r}_{R^c}(X^\dt)$.
\end{itemize}
\end{corollary}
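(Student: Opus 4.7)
The strategy is a direct two-step bookkeeping exercise: apply the $\sA$-lemma to $\sD(X^\dt)$, then apply the $\sD$-lemma to the resulting invariants of $\sD(X^\dt)$, and cancel the two layers of $\kk$-duality using that all multigraded pieces in sight are finite dimensional.

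Set $Y^\dt = \sD(X^\dt)$. From the $\sA$-lemma applied to $Y^\dt$ we read off
\[ B^i_R(\sA Y^\dt) = C^{-i-r}_R(Y^\dt)^*, \quad H^i_R(\sA Y^\dt) = H^{-i}_{R^c}(Y^\dt)^*, \quad C^i_R(\sA Y^\dt) = B^{-i-r}_R(Y^\dt)^*. \]
Next I would substitute $Y^\dt = \sD(X^\dt)$ and invoke the $\sD$-lemma, which in its three forms gives, for any index $j$ and any subset $S\sus[n]$,
\[ B^j_S(\sD X^\dt) = B^{-j}_{S^c}(X^\dt)^*, \quad H^j_S(\sD X^\dt) = C^{-j}_{S^c}(X^\dt)^*, \quad C^j_S(\sD X^\dt) = H^{-j}_{S^c}(X^\dt)^*. \]
Choosing the appropriate $(j,S)$ in each line (namely $(j,S) = (-i-r,R)$ for the first and third, and $(j,S) = (-i,R^c)$ for the second) turns the three right hand sides into $H^{i+r}_{R^c}(X^\dt)^*$, $C^{i}_{R}(X^\dt)^*$, and $B^{i+r}_{R^c}(X^\dt)^*$ respectively.

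Combining, the composition introduces the double dual $(\,\cdot\,)^{**}$, which is the identity on finite dimensional vector spaces, yielding the three asserted identifications. The only points that require care are (a) keeping track of the complement $R^c$ in the two applications, noting that in the first and third equations the $\sA$-lemma keeps $R$ while the $\sD$-lemma then flips to $R^c$, whereas in the second equation the $\sA$-lemma first flips to $R^c$ and the $\sD$-lemma flips back to $R$; and (b) the shift $r=|R|$ produced by the $\sA$-lemma matches the one demanded in the statement. There is no real obstacle, as neither flipping nor shifting interacts nontrivially with the other; the content of the corollary is entirely contained in the two preceding lemmata.
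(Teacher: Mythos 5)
Your proposal is correct and is exactly the argument the paper intends: the corollary is stated as a direct consequence of the two preceding lemmata, obtained by applying the $\sA$-lemma to $\sD(X^\dt)$, feeding in the $\sD$-lemma, and cancelling the two $\kk$-duals on the finite dimensional multigraded pieces. Your index and complement bookkeeping checks out in all three cases, so there is nothing to add.
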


We may depict the rotation of homological invariants by the diagram

\begin{center}
\hskip 0mm
\xymatrix{  & & B  \ar[dd]^{\sA \circ \sD} \\
          H \ar[urr]^{\sA \circ \sD}  & &  \\
             & &  C \ar[ull]^{ \sA \circ \sD}. } 
\end{center}

\vskip 3mm

\begin{remark}
Composing $\AA$ and $\DD$ alternately, and applying it to $F^\dt$
we get six distinct complexes up to translation, corresponding to all 
permutations of the triplet data set $(B,H,C)$. 
In the squarefree setting we thus get a situation of perfect symmetry
between the homological invariants.
In contrast, in the
singly graded case we get a ``symmetry breakdown'' where only $H$ and
$C$ may be transferred into each other by the functor $\DD$, while
the Betti spaces have a distinct position.
\end{remark}
 
\begin{remark} For a positive multidegree $\aa = (a_1, a_2, \ldots, a_n)$,
Alexander duality may also be defined for the more general class of
$\aa$-determined modules, see \cite{MiAlex}. 
(Squarefree modules are $\en$-determined.)
The composition $\AA \circ \DD$
then has order the least common multiple 
$\lcm \{ a_i +2 \, |\, i = 1, \ldots, n \}$, see \cite{BrFl}. In
that paper all the multigraded homology and Betti spaces of 
the iterates $(\AA \circ \DD)^i(S/I)$ are computed for an $\aa$-determined
ideal $I \sus S$. 
\end{remark}


The following will be of particular interest and motivation in the
next Section \ref{PurecxSec}.

\begin{lemma} \label{SetLemCM}
The complexes $\AD(F^\dt)$ and $(\AD)^2(F^\dt)$ are both linear
if and only if $F^\dt$ is a resolution of a Cohen-Macaulay module.
\end{lemma}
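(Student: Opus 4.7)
The plan is to reduce each linearity condition to a concentration statement on the (co)homology of $F^\dt$ via Proposition \ref{HexProDA} and the rotation corollary, and then identify the combined condition as the Cohen--Macaulay property through a standard $\Ext$-characterization.

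First I observe that a minimal complex of free squarefree $S$-modules is linear exactly when it has at most one nonzero linear strand. By Proposition \ref{HexProDA}, the $i$'th linear strand of $\AD(F^\dt)$ is $\gL(H^i(F^\dt))[n-i]$. Since $\gL$ is built from the graded pieces of its argument tensored with $S$, it is nonzero whenever its input is nonzero. Hence $\AD(F^\dt)$ is linear if and only if there is a unique cohomological index $i_0$ with $H^{i_0}(F^\dt)\neq 0$; equivalently, up to translation, $F^\dt$ is a minimal free resolution of the squarefree module $M := H^{i_0}(F^\dt)$.

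Next I apply the same criterion to $(\AD)^2(F^\dt) = \AD(\AD(F^\dt))$: its linearity is equivalent to $\AD(F^\dt)$ having homology concentrated in a single cohomological position. By the rotation corollary $H^i_R(\AD(F^\dt)) = C^i_R(F^\dt)$, so this translates into the cohomology of $F^\dt$ being concentrated in a single position. Assuming now that $F^\dt$ is, up to translation, a resolution of $M$, the definition of $C^i_R$ together with $H^{-i}(\DD(F^\dt)) \iso \Ext^{-i}_S(M,\omega_S)$ (suitably shifted) shows that this happens if and only if $\Ext^j_S(M,\omega_S)$ vanishes for all but one index $j$.

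To finish I invoke the standard fact for finitely generated modules over the polynomial ring: $\Ext^j_S(M,\omega_S)=0$ for $j < \codim M$, $\Ext^{\codim M}_S(M,\omega_S)\neq 0$, and $M$ is Cohen--Macaulay if and only if $\Ext^j_S(M,\omega_S)=0$ for every $j \neq \codim M$. Combining this with the two preceding steps gives the claimed equivalence. The only nonroutine point to verify is the ``iff'' in passing from nonzero linear strands to nonzero $\gL(H^i(F^\dt))$, which is where the main content of Proposition \ref{HexProDA} is used; the remaining work is just bookkeeping of translations, noting that the choice of $i_0$ (and of the single cohomological index for $C$) is irrelevant for the equivalence since we work up to translation on the derived category.
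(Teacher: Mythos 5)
Your proof is correct and follows essentially the same route as the paper: it uses Proposition \ref{HexProDA} and the rotation of homological invariants to translate linearity of $\AD(F^\dt)$ and $(\AD)^2(F^\dt)$ into concentration of the homology, respectively the cohomology (i.e.\ $\Ext$ against $\omega_S$), of $F^\dt$, and then concludes by the standard $\Ext$-characterization of Cohen--Macaulayness, which is exactly the content of the paper's final sentence. The only cosmetic difference is that you read off the second condition from the homology of $\AD(F^\dt)$ rather than from the Betti spaces of $(\AD)^2(F^\dt)$, which amounts to the same thing via the rotation corollary.
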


\begin{proof} 
The various homology modules of $F^\dt$ are translated to the various
linear strands of $(\AD)(F^\dt)$.
So $F^\dt$ has only one nonzero homology module iff $(\AD)(F^\dt)$ is linear.
Similarly the cohomology of $F^\dt$ is translated to the Betti spaces
of $(\AD)^2(F^\dt)$ so $F^\dt$ has only one nonzero cohomology module iff
$(\AD)^2 (F^\dt)$ is linear. But the fact that $F^\dt$ has only one 
nonzero homology
module $M$ and $\DD(F^\dt)$ has only one nonzero homology module
is equivalent to $M$ being a Cohen-Macaulay module.
\end{proof}


\subsection{The functor $\sA \circ \sD$ on a basic class of modules}
\label{SetSubsecKoszul}
For $A \sus [n]$, the module $S(-A)$ is a projective module. Denote
by $S/A = S/(x_i)_{i \in A}$. (This is an injective module in $\sq$.)

More generally for a partition $A \cup B \cup C$ of $[n]$, the module
$(S/A)(-B)$ will be a squarefree module. Let us denote it as
$S/A(-B;C)$. These form a basic simple class of squarefree modules
closed with respect to the functors $\sA$ and $\sD$
when we identify modules with their minimal resolutions.

\begin{lemma} \label{SettingLemmaKoszul}
 Let $A \cup B \cup C$ be a partition of $[n]$.
\begin{itemize}
\item[1.] There is a quasi-isomorphism
\[\sD(S/A(-B;C)) \mto{\simeq} S/A(-C;B)[-a].\] 
\item[2.] There is a quasi-isomorphism  
\[\sA (S/A(-B;C)) \mto{\simeq} S/B(-A;C).\]
\end{itemize}
\end{lemma}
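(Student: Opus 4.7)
The plan is to handle the two parts separately. Part 1 uses the Koszul resolution of $(S/A)(-B)$ together with the self-duality of the Koszul complex. Part 2 is a direct computation of multigraded pieces of the naive Alexander dual, before taking resolutions.

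For part 1, start from the minimal free resolution $F_\dt$ of $(S/A)(-B)$, which is the Koszul complex on the variables $\{x_j : j \in A\}$ twisted by $-B$:
\[ F_i = \bigoplus_{T \subseteq A,\, |T| = i} S(-(B \cup T)). \]
Apply $\sD = \Hom_S(-, S(-\en))$. Since $A \cup B \cup C = [n]$ and $T \subseteq A$, we have $\en - (B \cup T) = (A \setminus T) \cup C$, so $\sD(F_i) = \bigoplus_{T \subseteq A,\, |T| = i} S(-((A \setminus T) \cup C))$. Substituting $T' = A \setminus T$ yields
\[ \sD(F^\dt)^i = \bigoplus_{T' \subseteq A,\, |T'| = a - i} S(-(T' \cup C)), \]
which is precisely the Koszul resolution of $(S/A)(-C) = S/A(-C;B)$ placed in cohomological degrees $0, \ldots, a$, i.e.\ shifted by $[-a]$ from its standard placement. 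The induced differentials agree, up to sign, with the Koszul differentials by the standard self-duality of the Koszul complex. Hence $\sD(S/A(-B;C)) \simeq S/A(-C;B)[-a]$.

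For part 2, compute $\bA$ at the level of squarefree modules. The piece $((S/A)(-B))_R$ is one-dimensional exactly when $B \subseteq R \subseteq B \cup C$ and zero otherwise. By definition $\bA(M)_R = (M_{R^c})^*$, so $\bA((S/A)(-B))_R$ is one-dimensional iff $B \subseteq R^c \subseteq B \cup C$, equivalently $A \subseteq R \subseteq A \cup C$; this is exactly the support of $(S/B)(-A)$. The maps $\bA(M)_R \mto{x_v} \bA(M)_{R \cup \{v\}}$ are duals of the multiplications in $M$, and since every nonzero multiplication in $(S/A)(-B)$ is an identification of one-dimensional pieces by a variable $x_v$ with $v \in C$, the same is true for $\bA((S/A)(-B))$. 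Since a squarefree module is determined by its multigraded pieces together with its multiplication maps, we conclude $\bA((S/A)(-B)) \iso (S/B)(-A)$. Applying $\res$ produces the desired quasi-isomorphism $\sA(S/A(-B;C)) \simeq S/B(-A;C)$.

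The only real obstacle is matching differentials in part 1, which is handled by the self-duality of the Koszul complex; in part 2 the isomorphism is forced once the multigraded pieces and multiplication maps are identified.
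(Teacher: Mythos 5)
Your proof is correct and follows essentially the same route as the paper: part 1 by dualizing the Koszul resolution of $(S/A)(-B)$ and identifying the result with the (shifted) Koszul resolution of $(S/A)(-C)$, and part 2 by checking the multigraded pieces and multiplication maps of the naive Alexander dual, which is exactly the verification the paper leaves to the reader. The extra detail you supply on multidegrees and on the cohomological placement of the shift $[-a]$ is consistent with the paper's conventions.
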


\begin{proof}
For $A \sus [n]$ denote by $\kr A$ the vector space generated by $x_i,
i \in A$. The projective resolution of $S/A(-B)$ is 
\[ P^\dt : S(-B) \vpil S(-B) \te (\kr A) \vpil S(-B) \te \wedge^2(\kr A)
\vpil \cdots \vpil S(-B) \te \wedge^a(\kr A). \]

The dual complex $\Hom_S(P^\dt, S(-\en))$ is $\DD(S/A(-B))$.
Since the last term $S(-B) \te \wedge^a (\kr A)$ in $P^\dt$ is generated 
in degree $A \cup B$, the dual complex is
\[ S(-C) \vpil S(-C) \te (\kr A) \vpil S(-C) \te \wedge^2(\kr A)
\vpil \cdots \vpil S(-C) \te (\kr A)^a, \]
a resolution of $S/A(-C;B)$. 

To see the second part of the lemma, it is not difficult to verify that
the Alexander dual $\bA(S/A(-B;C)) \iso S/B(-A;C)$.
\end{proof}

We then get the following diagram.

\vskip 3mm
\begin{center}
\hskip -2mm
\xymatrix{  & S/C(-B;A)[-a-c] \ar[dr]^{\sA}  & \\
  S/C(-A;B)[a] \ar[ur]^{\sD} &     & S/B(-C;A)[a+c] \ar[d]^{\sD} \\
S/A(-C;B)[-a] \ar[u]^{\sA} &   & S/B(-A;C)[-a-b-c] \ar[dl]^{\sA[-n]} \\
&    S/A(-B;C) \ar[ul]^{\sD} &  \\  }
\end{center}
\vskip 3mm

A particular case is the following diagram.
\vskip 3mm
\begin{center}
\hskip -2mm
\xymatrix{ &  \kk[-n] \ar[dr]^{\sA}  & \\
  \kk \ar[ur]^{\sD} &  & S(-\en)[n] \ar[d]^{\sD} \\
S(-\en) \ar[u]^{\sA} &  & S[-n] \ar[dl]^{\sA[-n]} \\
 & S \ar[ul]^{\sD} &   \\  }
\end{center}
\vskip 3mm

\section{Triplets of pure complexes}

\label{PurecxSec} 
As stated in the introduction
the importance of pure free resolutions of Cohen-Macaulay $S$-modules
is established with the Boij-S\"oderberg conjectures, and
their subsequent demonstration in \cite{ES}.


A complex of free $S$-modules $F^\dt$ is {\it pure}
if it has the form
\[ F^\dt  :   S(-d_0)^{\beta_0} \vpil S(-d_1)^{\beta_1} \vpil
\cdots \vpil S(-d_{r})^{\beta_{r}}  \]
for some integers $d_0 < d_1 < \cdots < d_r$. These integers
are the {\it degree sequence} of the pure complex.

We shall investigate the condition that {\it all three complexes} 
$F^\dt$, $(\sA \circ \sD)(F^\dt)$ 
and $(\sA \circ \sD)^2(F^\dt)$
are {\it pure} when considered as singly graded complexes.
By Lemma \ref{SetLemCM} the special case
that $F^\dt$ is a pure resolution of a Cohen-Macaulay module 
corresponds to the case that $F^\dt$ is pure while 
$(\sA \circ \sD)(F^\dt)$ 
and $(\sA \circ \sD)^2(F^\dt)$ are both linear complexes.


\subsection{Basic properties and examples}
We now give an example of a triplet of pure complexes, but let
us first give a lemma telling how $\AD(F^\dt)$ may be computed.

\begin{lemma}
Let 
\[ F^\dt: \cdots \pil F^i \pil F^{i+1} \pil \cdots . \]
Then $\AD(F^\dt)$ is homotopy equivalent to the total complex of 
\[ \cdots \pil \AD(F^i) \pil \AD(F^{i+1}) \pil \cdots . \]
\end{lemma}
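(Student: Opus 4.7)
The plan is to exhibit both complexes as bounded-above free squarefree resolutions of one and the same object of $C^b(\sq)$ and then invoke uniqueness of such resolutions up to homotopy.

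Since $\DD = \Hom_S(-, S(-\en))$ is an exact contravariant functor that preserves $\fsq$, and $\bA$ is an exact contravariant duality on $\sq$, the composition $\bA \circ \DD$ applied termwise to $F^\dt$ produces a bounded complex
\[ X^\dt := \bA(\DD(F^\dt)) \in C^b(\sq), \qquad X^i = \bA(\DD(F^i)). \]
By definition of $\AA = \res \circ \bA$, the complex $\AD(F^\dt) = \res(X^\dt)$ is a free squarefree resolution of $X^\dt$.

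The content of the lemma is then that the total complex of the double complex $P^{\dt,\dt}$ with columns $P^{\dt,i} = \AD(F^i)$ is also such a resolution. Each column $P^{\dt,i}$ is by construction a free resolution of $X^i$. To promote the termwise application of $\AD$ to a genuine double complex, I would follow the Cartan--Eilenberg recipe: inductively lift the horizontal differentials of $X^\dt$ through the column resolutions, correcting at each step so that the horizontal composition $d_h^2 = 0$ holds on the nose rather than only up to homotopy. The columns of the resulting $P^{\dt,\dt}$ are then exact when augmented by the $X^i$, so the column filtration spectral sequence collapses at $E_2$ and yields a quasi-isomorphism $\text{Tot}(P^{\dt,\dt}) \mto{\simeq} X^\dt$.

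Both $\AD(F^\dt)$ and $\text{Tot}(P^{\dt,\dt})$ are therefore bounded-above complexes of projective objects in $\sq$ quasi-isomorphic to $X^\dt$; the standard uniqueness theorem for projective resolutions of complexes then implies the two are homotopy equivalent, as claimed. The only delicate step is the Cartan--Eilenberg strictification of the horizontal differentials, but this is a classical maneuver and poses no genuine obstacle here.
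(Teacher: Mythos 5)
Your argument is essentially the paper's own proof: the paper likewise observes that $\bA\circ\DD(F^\dt)$ is computed termwise, lifts the differentials of that complex of modules to the column resolutions $\AD(F^i)$, notes the resulting total complex is a free resolution of the complex, and concludes by uniqueness of (minimal) free resolutions up to homotopy. Your extra remarks on Cartan--Eilenberg strictification and the column-filtration spectral sequence just make explicit the lifting step the paper states in one line, so the proposal is correct and follows the same route.
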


\begin{proof} Recall Alexander duality $\bA$ on the category of squarefree
modules.
The complex ${\bA} \circ \DD (F^\dt)$ is simply the complex of modules
\[ \cdots \pil \bA \circ \DD(F^i) \pil \bA \circ \DD(F^{i+1}) \pil \cdots . 
\]
Now if 
\begin{equation} \label{PurecxLigM}
 \cdots \pil M^i \pil M^{i+1} \pil \cdots 
\end{equation}
is a sequence of modules and $F^{i, \dt} \pil M^i$ is a free resolution,
we may lift the differentials $M^i \pil M^{i+1}$, to differentials
$F^{i,\dt} \pil F^{i+1, \dt}$. 
Then the total complex of 
\[ \cdots \pil F^{i,\dt} \pil F^{i+1, \dt} \pil \cdots \]
will be a resolution  of (\ref{PurecxLigM}) and hence it is homotopy 
equivalent to a minimal free resolution of this complex.
Whence the result follows since $\AD(F^i)$ is a free resolution of 
$\bA \circ \DD(F^i)$. 
\end{proof}

\begin{example} \label{PureEksRenecx}
Let $S = \kk[x_1, x_2, x_3]$.
Consider the complex
\[ F^\dt : S \xleftarrow{[x_1x_2, x_1x_3, x_2x_3]} S(-2)^3. \]
First we find $(\sA \circ \sD)(F^\dt)$. 
By the figures of Subsection \ref{SetSubsecKoszul},
$\sA \circ \sD(S)$ is isomorphic to $\kk$, and the 
resolution is the Koszul complex
\[ S \vpil S(-1)^3 \vpil S(-2)^3 \vpil S(-3). \]
(It is really multigraded but for simplicity we only depict it
as singly graded.)
Also $\sA \circ \sD ( S(-([3]\backslash \{i\})))$ is isomorphic
to $S/(x_i)$ and so has resolution $S \vmto{x_i} S(-1)$.

Therefore $\sA \circ \sD(F^\dt)$ is a minimal version of the total 
complex of 
\begin{center}

\xymatrix{S^3 \ar[d]  & S(-1)^3 \ar[l] \ar[d]  &   & \\
 S & S(-1)^3 \ar[l] &  S(-2)^3 \ar[l] & S(-3) \ar[l]. }

\end{center}
It is easily seen that such minimal version is  
is 
\[
S^2 \xleftarrow{\left[ \begin{matrix} x_2x_3 & -x_1x_3 & 0 \\
                                 0 & x_1x_3 & -x_1x_2 
                     \end{matrix} \right ]}
S(-2)^3 \xleftarrow
{\left[ \begin{matrix} x_1 \\ x_2 \\ x_3 \end{matrix} \right ]}
S(-3).
\]
Now consider $(\sA \circ \sD)^2(F^\dt)$. 
By Lemma \ref{SettingLemmaKoszul}, 
$(\sA \circ \sD)^2(S)$ is isomorphic to $S(-3)[3]$ 
and $(\sA \circ \sD)^2 (S(-\{1,2\}))$ is isomorphic to
$S/(x_1, x_2)(-\{3\})[1]$. Therefore $(\sA \circ \sD)^2(F^\dt)$
is a minimal version of the total complex of 

\begin{center}
\xymatrix {S(-1)^3  & S(-2)^6 \ar[l] &  S(-3)^3 \ar[l] \ar[d]\\
             & & S(-3). }
\end{center}

Such a minimal version is then
\[ S(-1)^3 \xleftarrow{\left[ \begin{matrix} x_1 & x_2 & 0 & 0 & 0 & 0 \\
                                       0 & 0 & x_2 & x_3 & 0 & 0 \\
                                       0 & 0 & 0 & 0 & x_3 & x_1 
                     \end{matrix} \right ]}
S(-2)^6 
\xleftarrow{\left[ \begin{matrix} x_2 & -x_1 & -x_3 & x_2 & 0 & 0 \\
                            0 & 0 & x_3 & -x_2 & -x_1 & x_3 
                     \end{matrix} \right ]^{t}}
S(-3)^2.
\]


In summary 
\begin{align*}
F^\dt &  : \, S \vpil   S(-2)^3 \\
\sA \circ \sD(F^\dt) &  : \,  S^2 \vpil   S(-2)^3 \vpil S(-3) \\
(\sA \circ \sD)^2(F^\dt) &  : \,  S(-1)^3 \vpil    S(-2)^6 \vpil S(-3)^2.
\end{align*}
So all complexes are pure, and two of them are not linear.
\end{example}

\begin{lemma} \label{PurecxLemStartslutt}
Let 
\[ F^\dt : S(-a_0)^\alpha \vpil S(-a_1)^{\alpha^\prime} \vpil \cdots \]
be a pure complex of squarefree modules with final term $S(-a_0)^\alpha$
in cohomological position $t$. If $\sA \circ \sD (F^\dt)$ also is 
a pure complex, then 
\[ \sA \circ \sD (F^\dt) : \quad \cdots \vpil S(-n+a_0)^\alpha \]
where the initial term $S(-n+a_0)^\alpha$ is in cohomological position 
$-n+a_0+t$.

As a consequence the initial terms of $\DD(F^\dt)$ and its Alexander
dual $\AD(F^\dt)$ are both equal to  $S(-n+a_0)^\alpha$.
\end{lemma}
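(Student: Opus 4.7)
The plan is to read off the initial term of $\AD(F^\dt)$ directly from the rotation-of-invariants formula $B^j_R(\AD(F^\dt)) = H^{j+|R|}_{R^c}(F^\dt)$ supplied by the preceding Corollary. The key numerical input is that strictly increasing integer degrees force $a_q \geq a_0 + q$ for all $q \geq 0$, where I parametrize the pure complex by $F^{t-q} = S(-a_q)^{\alpha_q}$ with $a_0 < a_1 < \cdots$.

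First I would establish a lower bound on the cohomological position of any non-vanishing Betti. If $B^j_R(\AD(F^\dt)) \neq 0$, then $H^{j+|R|}_{R^c}(F^\dt) \neq 0$, and being a subquotient of $F^{j+|R|}_{R^c}$ this forces the multigraded piece to be non-zero. Setting $q = t - j - |R|$, non-vanishing demands $a_q \leq |R^c| = n - |R|$, and combining with $a_q \geq a_0 + q$ gives $j \geq -n + a_0 + t$.

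Next I would evaluate the Betti at the critical position $j = -n + a_0 + t$ and $|R| = n - a_0$, where $q = 0$ and $|R^c| = a_0$. Here $B^j_R = H^t_{R^c}(F^\dt)$; since the image of the last differential $F^{t-1} \to F^t$ lives in degrees $\geq a_1 > a_0$, it vanishes at any multidegree of size $a_0$, so $H^t_{R^c} = F^t_{R^c}$. Summing over $R^c$ with $|R^c| = a_0$ then recovers exactly $\alpha$, the total generator count of $F^t = S(-a_0)^\alpha$. Invoking the purity of $\AD(F^\dt)$, the module at position $-n+a_0+t$ is of the form $S(-e)^\gamma$ for a single $e$; the non-vanishing Betti at degree $n-a_0$ fixes $e = n-a_0$, and purity excludes Betti at any other singly-graded degree at this position, so $\gamma = \alpha$. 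This identifies the initial term of $\AD(F^\dt)$ as $S(-n+a_0)^\alpha$ at position $-n+a_0+t$. The consequence for $\DD(F^\dt)$ is then a direct computation: $\DD(S(-a_0)^\alpha) = S(a_0 - \en)^\alpha$, which is $S(-n+a_0)^\alpha$ in single grading, and as $\DD$ is contravariant this module sits at the initial end of $\DD(F^\dt)$.

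The main obstacle is the subcase where $F^\dt$ has linear portions at its top, i.e.\ $a_q = a_0 + q$ for some $q \geq 1$. Then a priori the homology pieces $H^{t-q}_{R^c}(F^\dt)$ with $|R^c| = a_0 + q$ could contribute further Betti at the critical position $-n+a_0+t$ but at the smaller singly-graded degrees $n - a_0 - q$; it is precisely the purity hypothesis on $\AD(F^\dt)$ that forces these potential contributions to vanish.
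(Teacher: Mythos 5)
Your argument is correct, but it takes a genuinely different route from the paper's. The paper proves the lemma structurally: by the (unlabeled) lemma preceding Example \ref{PureEksRenecx}, $\AD(F^\dt)$ is the minimalization of the total complex of the termwise duals $\AD(S(-a_i)^{\alpha_i})$, each of which is the Koszul-type complex from Subsection \ref{SetSubsecKoszul} ending in $S(-n+a_i)^{\alpha_i}$; the extreme corner term $S(-n+a_0)^\alpha$ cannot cancel in the minimalization, so it is the initial term, and counting the length $n-a_0$ of that row gives the position $-n+a_0+t$. You instead work purely with homological invariants: the rotation formula $B^i_R(\AD(F^\dt)) = H^{i+r}_{R^c}(F^\dt)$, the inequality $a_q \geq a_0+q$ coming from purity of $F^\dt$, and the identification $H^t_{R^c}(F^\dt) = F^t_{R^c}$ in total degree $a_0$ show that all Betti spaces of $\AD(F^\dt)$ sit in cohomological positions $\geq -n+a_0+t$ and that the degree-$(n-a_0)$ Betti number at the minimal position is exactly $\alpha$; purity of $\AD(F^\dt)$ then excludes other twists there. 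The delicate case you isolate (a linear top of $F^\dt$, $a_q=a_0+q$, which could contribute generators of degrees $n-a_0-q$ at the same position) is precisely what the paper's ``cannot cancel out'' argument also leaves to the purity hypothesis, and your version makes that use explicit. What the paper's approach buys is a concrete model of $\AD(F^\dt)$ (used again, e.g., in Example \ref{PureEksRenecx}) and a transparent position count; what yours buys is independence from the total-complex computation and a cleaner accounting at the critical position. The one point worth stating explicitly in your write-up is that $\AD(F^\dt)$ is minimal by construction ($\AA = \res \circ \bA$), so its terms really are read off from the Betti spaces $B^i_R$.
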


\begin{proof}
Considered as a complex of graded modules, $\sA \circ \sD (F^\dt)$ is 
the total complex of 

\begin{center}

\xymatrix{ 
S^{\alpha^\prime} \ar[d] & S(-1)^{\alpha^\prime(n-a_1)} \ar[l] \ar[d] & \cdots \ar[l] &  \\
S^\alpha          & S(-1)^{\alpha(n-a_0)} \ar[l] & \cdots \ar[l] &
 S(-n+a_0)^\alpha. \ar[l]
}

\end{center}

When making a minimal complex of the total complex, $S(-n+a_0)^\alpha$ 
cannot cancel out, so it must be the last term. Since $S^\alpha$
is in cohomological position $t$, the last term must be in 
cohomological position $-n+a_0+t$.
\end{proof}

In a pure complex 
\begin{equation} \label{PureLigRenres}
F^\dt  :   S(-a_0)^{\alpha_0} \vpil S(-a_1)^{\alpha_1} \vpil
\cdots \vpil S(-a_{r})^{\alpha_{r}} 
\end{equation}
an integer $d$ is called a {\it degree} of this complex
if $d = a_i$ for some $i$. Otherwise it is called a nondegree.
If the nondegree is in $[a_0, a_r]$ it is an {\it internal}
nondegree.

\medskip
Now suppose we have a situation where $(\sA \circ \sD)^i(F^\dt)$
are pure complexes for $i = 0,1$ and $2$.
Write the complexes as:
\begin{eqnarray*}
F^\dt & : &  S(-a_0)^{\alpha_{0}} \vpil S(-a_1)^{\alpha_{1}} \vpil
\cdots \vpil S(-a_{r_0})^{\alpha_{{r_0}}} \\
\sA \circ \sD(F^\dt) & : & S(-b_0)^{\beta_{0}} \vpil S(-b_1)^{\beta_{1}} \vpil
\cdots \vpil S(-b_{r_1})^{\alpha_{{r_1}}} \\
(\sA \circ \sD)^2(F^\dt) & : & S(-c_0)^{\gamma_{0}} \vpil S(-c_1)^{\gamma_{1}} \vpil
\cdots \vpil S(-c_{r_2})^{\gamma_{{r_2}}}.
\end{eqnarray*}

We denote by $A$ the set of degrees of $F^\dt$, 
and similarly $B$ and $C$ for the degrees of $\AD(F^\dt)$ and 
$(\AD)^2(F^\dt)$. The triplet $(A,B,C)$ is the {\it degree triplet}
of the triplet of pure complexes.
Let $e_A$ be the number of internal nondegrees
of $F^\dt$, and 
correspondingly we define $e_B$ and $e_C$. Let $e$ be the total
number of internal nondegrees for the triplet, $e_A + e_B + e_C$. 
As they turn out to be central invariants, we let 
$c = a_0$, $a=b_0$ and $b = c_0$.


\begin{proposition} \label{PurecxPropDeg}

a. The degrees in the last terms of the complexes above are $a_{r_0} = n - b$,
$b_{r_1} = n - c$, and  $c_{r_2} = n - a$.

b. The number of variables $n = a + b + c + e$. 
\end{proposition}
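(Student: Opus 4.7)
The plan is to handle part (a) directly from Lemma~\ref{PurecxLemStartslutt}, and to reduce part (b) to a length identity $r_0 + r_1 + r_2 = 2n - (a+b+c)$ which in turn is established by tracking cohomological positions around the threefold rotation.

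For part (a), I would apply Lemma~\ref{PurecxLemStartslutt} in succession to each of $F^\dt$, $\AD(F^\dt)$, and $(\AD)^2(F^\dt)$. Each application reads off the initial term of the next complex in the rotation: from $F^\dt$ (final term of degree $a_0 = c$) we obtain $b_{r_1} = n - a_0 = n - c$; from $\AD(F^\dt)$ (final term of degree $b_0 = a$) we obtain $c_{r_2} = n - b_0 = n - a$; and from $(\AD)^2(F^\dt)$ (final term of degree $c_0 = b$), using Yanagawa's isomorphism $(\AD)^3(F^\dt) \iso F^\dt[n]$, we obtain $a_{r_0} = n - c_0 = n - b$.

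For part (b), I would use that any pure complex with degree sequence $d_0 < d_1 < \cdots < d_r$ carries exactly $(d_r - d_0) - r$ internal nondegrees, since the interval $[d_0, d_r]$ contains $d_r - d_0 + 1$ integers of which $r+1$ are degrees. Substituting the formulas from part (a) gives
\begin{align*}
e_A &= (n - b - c) - r_0,\\
e_B &= (n - c - a) - r_1,\\
e_C &= (n - a - b) - r_2,
\end{align*}
so that $e = 3n - 2(a + b + c) - (r_0 + r_1 + r_2)$. Hence the desired identity $n = a + b + c + e$ is equivalent to the length relation $r_0 + r_1 + r_2 = 2n - (a + b + c)$.

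The crucial (and only nontrivial) step is the length identity itself. I would establish it by tracking cohomological positions: place the final term of $F^\dt$ at position $t$, so the initial term of $F^\dt$ sits at $t - r_0$. The cohomological content of Lemma~\ref{PurecxLemStartslutt} puts the initial term of $\AD(F^\dt)$ at $t - n + c$, hence its final term at $t - n + c + r_1$; a second iteration places the final term of $(\AD)^2(F^\dt)$ at $t - 2n + a + c + r_1 + r_2$; a third places the initial term of $(\AD)^3(F^\dt)$ at $t - 3n + (a + b + c) + (r_1 + r_2)$. On the other hand, Yanagawa's isomorphism $(\AD)^3(F^\dt) \iso F^\dt[n]$ locates this same initial term at $t - r_0 - n$. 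Equating the two expressions yields the length identity $r_0 + r_1 + r_2 = 2n - (a + b + c)$, completing the proof.
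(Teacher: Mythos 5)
Your proof is correct and follows essentially the same route as the paper: part (a) by iterating Lemma \ref{PurecxLemStartslutt} around the triplet (with Yanagawa's $(\AD)^3 \iso [n]$ closing the loop), and part (b) by tracking cohomological positions through the three applications of the lemma and comparing with the shift by $n$. The only difference is cosmetic: you carry the lengths $r_0, r_1, r_2$ and convert to $e_A, e_B, e_C$ at the end, whereas the paper substitutes $r_i = (\text{degree span}) - e$ along the way.
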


\begin{proof}
Part a. is by Lemma \ref{PurecxLemStartslutt} above. 
Also, by the lemma above, if $S(-a_0)^{\alpha_{0}}$ in $F^\dt$ is in 
cohomological position $t$, then $S(-b_{r_1})^{\beta_{{r_1}}}$ in 
$\sA \circ \sD (F^\dt)$ is in position $t-b_{r_1}$, and so
the first term $S(-b_0)^{\beta_0}$ is in position $t-b_{r_1} + r_1$.
But $r_1 + e_B = b_{r_1} - b_0$, and so this position is 
$t-a- e_B$. Applying the lemma again, we get that $S(-c_0)^{\gamma_0}$
in $(\sA \circ \sD)^2 (F^\dt)$ is in position $t-a-b -e_B - e_C$.
And then again we get that $S(-a_0)^{\alpha_0}$ in $(\sA \circ \sD)^3 (F^\dt)$
is in position $t-a-b-c-e$.

But since $(\sA \circ \sD)^3$ is isomorphic to the
$n$'th iterate of the translation functor, we get that $n = a+b+c+e$.
\end{proof}

%
%

We can represent the degrees of the complex $F^\dt$ as a string
of circles indexed by the integers from $a_0 = c$ to $a_{r_0} = n-b$
by letting a circle be filled $\disk$ if it is at a position $a_i$ and
be a blank circle $\sirk$ otherwise.
\begin{example}
A complex 
\[ S(-1)^6 \vpil S(-3)^{27} \vpil S(-4)^{24} \vpil S(-7)^3 \]
with $n = 9$, gives rise to the diagram
\[  \overset{1}\disk \vpil \overset{2}\sirk \vpil \overset 3 \disk \vpil
\overset 4 \disk \vpil \overset 5 \sirk \vpil \overset 6 \sirk \vpil
\overset 7 \disk .\]
The dual complex $\sD(F^\dt) = \Hom_S(F^\dt, S(-\en))$, which is 
\[ S(-8)^6 \pil S(-6)^{27} \pil S(-5)^{24} \pil S(-2)^3, \]
gives a diagram by switching the orientation above and letting the
numbering be
 \[  \overset{8}\disk \pil \overset{7}\sirk \pil \overset 6 \disk \pil
\overset 5 \disk \pil \overset 4 \sirk \pil \overset 3 \sirk \pil
\overset 2 \disk .\]
\end{example}

All three complexes may be represented in a triangle, called the
{\em degree triangle} of the three complexes.

\scalebox{1} 
{\put(60,-5){
\begin{pspicture}(0,-1.823125)(7.3890624,1.823125)
\psdots[dotsize=0.14](1.79,-0.7703125)
\psdots[dotsize=0.14](2.07,-0.7703125)
\psdots[dotsize=0.14](4.79,-0.7903125)
\psdots[dotsize=0.14](4.65,-0.5703125)
\psdots[dotsize=0.14](3.41,1.2096875)
\psdots[dotsize=0.14](3.23,1.0096875)
\psdots[dotsize=0.14,fillstyle=solid,dotstyle=o](4.49,-0.7903125)
\psdots[dotsize=0.14,fillstyle=solid,dotstyle=o](3.61,0.9696875)
\psdots[dotsize=0.14,fillstyle=solid,dotstyle=o](2.37,-0.7903125)
\psdots[dotsize=0.14](1.97,-0.5503125)
\psdots[dotsize=0.14](2.17,-0.3703125)
\psdots[dotsize=0.14,fillstyle=solid,dotstyle=o](3.01,0.7696875)
\psdots[dotsize=0.14,fillstyle=solid,dotstyle=o](4.15,-0.8103125)
\psdots[dotsize=0.14](4.47,-0.3303125)
\psline[linewidth=0.04cm,linestyle=dotted,dotsep=0.16cm](3.83,0.6496875)(4.25,0.0296875)
\psline[linewidth=0.04cm,linestyle=dotted,dotsep=0.16cm](2.85,0.5296875)(2.39,-0.0503125)
\psdots[dotsize=0.14](3.79,0.7096875)
\psdots[dotsize=0.14](3.85,-0.8103125)
\psline[linewidth=0.04cm,linestyle=dotted,dotsep=0.16cm](2.61,-0.7903125)(3.45,-0.8103125)
\usefont{T1}{ptm}{m}{n}
\rput(3.5545313,-1.6203125){$F^\dt$}
\usefont{T1}{ptm}{m}{n}
\rput(1.3745313,-1.2203125){$c$}
\usefont{T1}{ptm}{m}{n}
\rput(3.4045312,1.6196876){$a$}
\usefont{T1}{ptm}{m}{n}
\rput(5.014531,-1.1803125){$b$}
\psline[linewidth=0.04cm,arrowsize=0.05291667cm 2.0,arrowlength=1.4,arrowinset=0.4]{<-}(2.73,0.7896875)(2.27,0.2496875)
\psline[linewidth=0.04cm,arrowsize=0.05291667cm 2.0,arrowlength=1.4,arrowinset=0.4]{<-}(2.73,-1.1703125)(3.49,-1.1503125)
\psline[linewidth=0.04cm,arrowsize=0.05291667cm 2.0,arrowlength=1.4,arrowinset=0.4]{<-}(4.59,0.0496875)(4.13,0.6296875)
\usefont{T1}{ptm}{m}{n}
\rput(1.1745312,0.6796875){$\AD (F^\dt)$}
\usefont{T1}{ptm}{m}{n}
\rput(5.7145314,0.7196875){$(\AD)^2 (F^\dt)$}
\end{pspicture} 
}}


\noindent Note that the degrees of $F^\dt$ starts with $c$, then
proceed in ascending order and ends with $n-b$. 

\begin{lemma} \label{PurecxLemabc}
In the degree triangle above the following holds.
\begin{itemize}
\item[a.] The length, i.e. the number of circles (both filled and blank), 
of the side
corresponding to the set $A$, the degree sequence of $F^\dt$, is $a + e+1$.
Similar relations hold for the other sides.
\item[b.] The number of circles in the degree triangle is 
$a+b+c + 3e$. In particular at most a third of the circles are blank
circles.
\item[c.]
The Koszul complexes given in 
Subsection \ref{SetSubsecKoszul} give all cases of degree triangles
where there are no internal nondegrees, i.e. no blank circles.
\end{itemize}
\end{lemma}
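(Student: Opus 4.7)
The plan is to deduce (a) and (b) directly from Proposition \ref{PurecxPropDeg}, and to establish (c) by an explicit construction using the Koszul complexes of Subsection \ref{SetSubsecKoszul}. Only part (c) requires a real argument; the subtlety there is tracking how $\AD$ acts on the basic modules $S/A_0(-B_0;C_0)$, but this is precisely what Lemma \ref{SettingLemmaKoszul} provides.

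Part (a) is immediate: the $A$-side carries circles at every integer position from $a_0 = c$ to $a_{r_0} = n - b$, so its length is $(n - b) - c + 1$, which simplifies to $a + e + 1$ using $n = a + b + c + e$ from Proposition \ref{PurecxPropDeg}(b). Cyclically, the $B$- and $C$-sides have lengths $b + e + 1$ and $c + e + 1$. For part (b), I sum these to obtain $(a+e+1)+(b+e+1)+(c+e+1) = a + b + c + 3e + 3$. Each of the three corners of the degree triangle is a single filled circle shared by two adjacent sides (for instance, the $c$-labelled corner is simultaneously the first circle of the $A$-side and the first circle of the $B$-side in the picture above), so subtracting the three doubly counted corners yields the total $a + b + c + 3e$. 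All $e$ blank circles are internal nondegrees, hence not corners, so blanks still total $e$; the bound $e \leq (a + b + c + 3e)/3$ then follows from $a + b + c \geq 0$.

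For part (c), a degree triangle with $e = 0$ satisfies $n = a + b + c$. I choose any partition $[n] = A_0 \cup B_0 \cup C_0$ with $|A_0| = a$, $|B_0| = c$, $|C_0| = b$, and take $F^\dt$ to be the Koszul resolution of $S/A_0(-B_0; C_0)$. Its degrees run consecutively from $|B_0| = c$ to $|B_0| + |A_0| = n - b$, matching the $A$-side. By iterating the two quasi-isomorphisms of Lemma \ref{SettingLemmaKoszul} and using that $\AD$ produces minimal free squarefree complexes, $\AD(F^\dt)$ is the Koszul resolution of $S/C_0(-A_0; B_0)$ with degrees $a, a+1, \ldots, n - c$, while $(\AD)^2(F^\dt)$ is the Koszul resolution of $S/B_0(-C_0; A_0)$ with degrees $b, b+1, \ldots, n - a$. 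These match the $B$- and $C$-sides, so every degree triangle with no blank circles arises from a Koszul triplet.
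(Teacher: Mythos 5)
Your proposal is correct and follows essentially the same route as the paper: part (a) is the cardinality of the interval $[c,n-b]$ via Proposition \ref{PurecxPropDeg}, part (b) follows by summing the three side lengths and removing the three shared (filled) corner circles, and part (c) matches the three parameters $|A_0|,|B_0|,|C_0|$ of the Koszul triplets of Subsection \ref{SetSubsecKoszul} to $a,b,c$ with $n=a+b+c$. You are merely more explicit than the paper in (c), tracing the $\AD$-orbit through Lemma \ref{SettingLemmaKoszul}, which is exactly what the paper's diagram in that subsection encodes.
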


\begin{proof}
a. This is because the number of circles is the cardinality of the interval
$[c, n-b]$ which is this number by Proposition \ref{PurecxPropDeg}.
Part b. above follows immediately from part a. Regarding part c. 
there are three numerical parameters for these Koszul complexes,
the cardinalities of $|A|, |B|$ and $|C|$, and these correspond to 
$a+1,b+1$ and $c+1$.
\end{proof}




\begin{example}
The minimal complexes in Example \ref{PureEksRenecx}
give rise to the following degree triangle.

\scalebox{1} 
{\put(120,-15){
\begin{pspicture}(0,-1.253125)(3.0690625,1.253125)
\psdots[dotsize=0.144](0.79,-0.8403125)
\psdots[dotsize=0.144](2.39,-0.8603125)
\psdots[dotsize=0.144](1.79,0.5596875)
\psdots[dotsize=0.144](2.07,-0.1803125)
\psdots[dotsize=0.144](1.13,-0.3803125)
\psdots[dotsize=0.144,fillstyle=solid,dotstyle=o](1.57,-0.8603125)
\psdots[dotsize=0.144,fillstyle=solid,dotstyle=o](1.47,0.1396875)
\usefont{T1}{ptm}{m}{n}
\rput(1.7945312,1.0496875){$0$}
\usefont{T1}{ptm}{m}{n}
\rput(0.27453125,-1.0503125){$0$}
\usefont{T1}{ptm}{m}{n}
\rput(2.6945312,-1.0503125){$1$}
\end{pspicture} 
}}
\end{example}
\vskip 3mm

\subsection{A balancing condition}
Suppose $F^\dt, \sAD (F^\dt)$ and $(\sAD)^2(F^\dt)$ 
is a triplet of pure free squarefree complexes. The interior
nondegrees of these complexes cannot be arbitrarily distributed.
There is a certain balancing condition which we now give.





Let $G^\dt$ be one of the three complexes, 
so $G^\dt$ and its Alexander dual $\AA(G^\dt)$ are pure complexes. 
In particular they have the same initial term $S(-n+g)^\gamma$.
We can display their degrees as

\scalebox{1} 
{\put(100,0){
\begin{pspicture}(0,-1.1664063)(4.73,1.1464063)
\psdots[dotsize=0.14](2.11,-0.29359376)
\psdots[dotsize=0.14](2.29,-0.07359375)
\psdots[dotsize=0.14](2.67,0.34640625)
\psdots[dotsize=0.14](2.61,-0.33359376)
\psdots[dotsize=0.14](3.15,-0.33359376)
\psdots[dotsize=0.14,fillstyle=solid,dotstyle=o](2.33,-0.31359375)
\psdots[dotsize=0.14,fillstyle=solid,dotstyle=o](2.87,-0.33359376)
\psdots[dotsize=0.14,fillstyle=solid,dotstyle=o](2.51,0.12640625)
\psdots[dotsize=0.14](3.45,-0.33359376)
\psline[linewidth=0.04cm,linestyle=dotted,dotsep=0.16cm](3.65,-0.33359376)(4.71,-0.35359374)
\psline[linewidth=0.04cm,linestyle=dotted,dotsep=0.16cm](2.73,0.38640624)(3.15,0.94640625)
\psline[linewidth=0.04cm,arrowsize=0.05291667cm 2.0,arrowlength=1.4,arrowinset=0.4]{<-}(2.91,1.1264062)(2.33,0.46640626)
\psline[linewidth=0.04cm,arrowsize=0.05291667cm 2.0,arrowlength=1.4,arrowinset=0.4]{<-}(3.53,-0.61359376)(2.67,-0.5935938)
\usefont{T1}{ptm}{m}{n}
\rput(2.9345312,-0.9635937){$G^\dt$}
\usefont{T1}{ptm}{m}{n}
\rput(1.1345313,0.8564063){$\AA(G^\dt)$}
\usefont{T1}{ptm}{m}{n}
\rput(1.6945312,-0.5235937){$g$}
\end{pspicture} 
}}

The balancing condition is the following.

\begin{proposition} \label{PurecxPropBalance}
Suppose $G^{\dt}$ belongs to a triplet
of pure free squarefree complexes and let $S(-n+g)^\gamma$ be the
initial term of $G^\dt$
and its Alexander dual  $\AA(G^\dt)$.
Then for each $0  \leq v \leq n-g$, the number of degrees of $G^\dt$ 
in the interval $[v, n-g]$ is greater than the number
of nondegrees of $\AA(G^\dt)$ in the interval $[v, n-g]$. 
\end{proposition}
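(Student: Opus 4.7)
My approach is to combine Yanagawa's theorem $(\AA\DD)^{3}\simeq[n]$ with a strand-wise analysis of the bicomplex that computes $\AA(G^\dt)$. Setting $H^\dt=(\AA\DD)^{2}(G^\dt)$, which is pure by the triplet hypothesis, one obtains $\AA(G^\dt)\simeq\DD(H^\dt)[-n]$; hence $\AA(G^\dt)$ is itself pure, with degrees equal to $n$ minus those of $H^\dt$. By Lemma~\ref{PurecxLemStartslutt} applied cyclically around the triplet, the top-degree terms of $G^\dt$ and of $\AA(G^\dt)$ coincide, both equal to $S(-n+g)^{\gamma}$.

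Writing $A$ and $\tilde A$ for the degree sets of $G^\dt$ and $\AA(G^\dt)$ and $a$ for the smallest degree of $\AA(G^\dt)$, a direct pigeonhole manipulation shows that the desired inequality is equivalent to
\[
|A \cap \tilde A \cap [v,n-g]| \;>\; |[v,n-g] \setminus (A \cup \tilde A)|
\qquad \text{for all } v\in[a,n-g]
\]
(the case $v<a$ is immediately implied by the case $v=a$). In words: in every terminal sub-interval of $[a,n-g]$, the \emph{common} degrees of $G^\dt$ and $\AA(G^\dt)$ must strictly outnumber the \emph{common} nondegrees. The shared top degree $n-g$ supplies one common degree at the right end, so the base case $v=n-g$ is automatic, and the content of the proposition is that this surplus is preserved as $v$ decreases.

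I would then induct on $v$ decreasing from $n-g$. Moving from $v$ to $v-1$ either keeps the signed difference non-decreasing, or, in the dangerous case where $v-1$ is a common nondegree, drops it by one; the inductive invariant is maintained unless each such common nondegree is matched by a common degree strictly to its right. The main obstacle is producing these matching common degrees.

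My plan is to use the bicomplex computing $\AA(G^\dt)$ as the free resolution of $\bA(G^\dt)$: since every $\bA G^{i}$ is generated in degree $0$, the horizontal differentials in $\bA(G^\dt)$ are scalar matrices, and the bicomplex splits into strands at fixed singly graded degrees. The $d$-strand is a complex of $\kk$-vector spaces of dimensions $\binom{g_i}{d}\gamma_i$ (at position $i$, for $g_i\geq d$) with scalar differentials lifted from $\bA(G^\dt)$, and $d$ is a nondegree of $\AA(G^\dt)$ precisely when this strand is acyclic. The necessary alternating-sum identity
$\sum_{i}(-1)^{i}\binom{g_i}{d}\gamma_i = 0$
imposed by acyclicity at a common nondegree $d$ is the numerical constraint I intend to exploit to force an additional degree of $G^\dt$ in $(d,n-g]$ that becomes a matching common degree, closing the induction and completing the proof.
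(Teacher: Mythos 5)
Your proposal has two genuine gaps, one in the reduction and one in the main mechanism. First, the claim that the case $v<a$ is ``immediately implied'' by the case $v=a$ is false. In your reformulation, as $v$ drops below the smallest degree $a$ of $\AA(G^\dt)$ the left-hand side $|A\cap\tilde A\cap[v,n-g]|$ is frozen (no degrees of $\AA(G^\dt)$ lie below $a$), while every integer of $[v,a-1]$ that is not a degree of $G^\dt$ becomes a new common nondegree, so the inequality gets strictly harder; already in the Koszul-type triplets with no internal nondegrees the margin shrinks from $|A\cap\tilde A|$ down to exactly $1$ at $v=0$. This low range is not a formality: it is the paper's Case 1, which needs the global count $|A|+|\tilde A|>n-g+1$, and that count rests on $n=a+b+c+e$ from Proposition \ref{PurecxPropDeg}, i.e.\ on Yanagawa's theorem used quantitatively, not just to get purity of $\AA(G^\dt)$ as in your first paragraph.

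Second, the engine of your induction is the assertion that $d$ is a nondegree of $\AA(G^\dt)$ precisely when the degree-$d$ strand of the termwise dual complex $\bA(G^\dt)$ is acyclic. This conflates two different invariants: acyclicity of that strand is the vanishing of the homology $H^i(\bA(G^\dt))_d$, which by the rotation lemmas is governed by the homology of $G^\dt$ in complementary degrees, whereas ``$d$ is a nondegree of $\AA(G^\dt)$'' is a statement about the Betti spaces of the complex $\bA(G^\dt)$, which are governed by the cohomology modules of $G^\dt$ (equivalently the homology of $\DD(G^\dt)$), via $B^i_R(\AA(X^\dt))=C^{-i-r}_R(X^\dt)^*$; these do not coincide, and for a complex the Betti spaces are related to the strandwise homology only through a spectral sequence. (The dimension of the degree-$d$ strand is also $\binom{n-g_i}{d}\gamma_i$, not $\binom{g_i}{d}\gamma_i$, though that is minor.) Even granting the Euler-characteristic identities you write down --- which are true and are exactly the transition equations (\ref{NumLigAtilB})--(\ref{NumLigCtilA}) of Section \ref{NumSec} --- you never carry out the step that produces a matching common degree to the right of each common nondegree; that is the actual content of the proposition. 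The paper's Case 2 does this structurally: by Proposition \ref{HexProDA} the $-i$'th linear strand of $\AA(G^\dt)$ is built from the cohomology module $H^{-i}(\DD(G^\dt))$, which for $i>0$ lives in degrees $\geq a_i+1$, so that strand's degrees are bounded above by $n-a_i-1$; this bound forces the degrees $a_0,\ldots,a_l$ of $\DD(G^\dt)$ into $[g,n-v]$ and yields the count $\phi(v)\geq(n-g-v+1-l)+(l+1)$. You would need either this strand-degree bound or a genuinely new argument extracting existence of degrees from the linear relations alone; as it stands the proof is not complete.
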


\begin{proof}
We may let $F^\dt = \DD (G^\dt)$, so $F^\dt, \AD(F^\dt)$, and 
$(\AD)^2(F^\dt)$ is a triplet of pure free squarefree complexes.
With this notation
\[G^\dt = \DD(F^\dt): S(-n+c)^{\alpha} \pil \cdots \pil S(-b)^{\alpha^\prime} \]
so 
\[\AA(G^\dt) = \AD (F^\dt): S(-n+c)^{\alpha} \pil \cdots 
S(-a)^{\alpha^{\prime\prime}}.\]
Let $\phi(v)$ be the sum of the number of degrees of $G^\dt$ in 
$[v,n-c]$ and the number of degrees of $\AA(G^\dt)$ in this interval.
The statement of the proposition is equivalent to: 
$\phi(v)$ is greater than than the cardinality of $[v,n-c]$.



\medskip
\noindent {\it Case 1.} In the range $0 \leq v \leq \max \{a, b\}$
the difference $\phi(v) - |[v,n-c]|$ is weakly decreasing. So in order to prove
the statement in this range, it is enough to prove that 
\begin{equation} \label{PurecxLigNull}
 \phi(0) > |[0,n-c]| = n-c+1 = a+b+e+1.
\end{equation}
But 
\begin{align*}
  \phi(0) & =  |A| + |B| \\
 & =  a+e+1-e_A +b+e+1 - e_B \\
& =  a+b+2 + e + e_C,
\end{align*}
so clearly (\ref{PurecxLigNull}) holds.

\medskip
\noindent {\it Case 2.} Suppose now $v > \max \{ a,b \}$. 
We may as well assume that $a \geq b$, so $v > a$.
Let $c = a_0, a_1, \ldots$ be the degrees of   $F^\dt = \DD(G^\dt)$,
with $S(-a_i)^{\alpha_i}$ in cohomological degree $-i$. 
The cohomology module $H^{-i}(F^\dt)$ is transferred to 
the $-i$'th linear strand of $\AA(G^\dt)$. Note that 
if $i > 0$, the least nonzero degree of this cohomology module,
if this module is nonzero,
is $\geq a_i + 1$. Hence the largest degree occurring
in the $-i$'th linear strand of $\AA(G^\dt)$, if this strand
is nonzero, is $ \leq n-a_i -1$.

If $v$ is a degree of $\AA(G^\dt)$, let $-l$ be the linear strand
it belongs to. Since $n-c$ belongs to the $0$'th linear strand,
the number of degrees of $\AA(G^\dt)$ in $[v,n-c]$ is $n-c-v+1-l$. 

Now if $v-1$ is an interior nondegree of $\AA(G^\dt)$
then 
\[ \phi(v-1) - |[v-1,n-c]| \leq \phi(v) - |[v,n-c]|.  \]
Therefore we might as well prove the statement for $v-1$. 
Since $a$ is a degree of $\AA(G^\dt)$, we may continue
this way and in the end come to a situation where $v-1$ is a degree
of $\AA(G^\dt)$. Let $-l$ be its linear strand in 
$\AA(G^\dt)$.  When $l > 0$, by what said above, $v-1 \leq n-a_l -1$ or 
equivalently $a_l \leq n-v$. But this also holds when $l = 0$. 
Hence the degrees $a_0, a_1, \ldots, a_l$ of 
$F^\dt$ all belong to $[c,n-v]$ and so
\[ \phi(v) \geq (n-c-v+1-l) + (l+1) > n-c-v+1. \]
\end{proof}


Given a natural number $n$. 
For an integer $d$ let $\overline{d} = n-d$, and for a subset of integer
$D$ let $\overline{D} = \{ \overline{d} \, | \, d \in D \}$.

\begin{definition}
A triplet of nonempty 
subsets $(A,B,C)$ of ${\mathbb N}_0$ is a {\it balanced degree 
triplet of type $n$} if there are integers $0 \leq a,b,c, \leq n$ such
that 
\begin{itemize}
\item[1.] 
\[ A \sus [c, \overline{b}],\,\,  B \sus [a, \overline{c}], \,\, 
C \sus [b, \overline{a}] \]
and the endpoints of each interval are in the respective subsets $A,
B$ or $C$. 
\item[2.] Let $e_A$ be the cardinality of $[c,\overline{b}] \backslash A$ and
correspondingly define $e_B$ and $e_C$. Then $n = a+b+c+e_A + e_B + e_C$. 
\item[3.] $A$ and $\overline{B}$ are balanced with respect to the
common endpoint $c$, i.e. for each $c \leq v \leq n$, the number
of elements of $[c,v]$ in $A$ is greater than the number of elements
of $[c,v]$ not in $\overline{B}$. Similarly for $B$ and $\overline{C}$
with respect to $a$ and $C$ and $\overline{A}$ with respect to $b$. 
\end{itemize}
\end{definition}

\scalebox{1} 
{\put(120,-15){
\begin{pspicture}(0,-1.633125)(4.2690625,1.633125)
\psdots[dotsize=0.14](0.67,-0.9603125)
\psdots[dotsize=0.14](0.95,-0.9603125)
\psdots[dotsize=0.14](3.67,-0.9803125)
\psdots[dotsize=0.14](3.53,-0.7603125)
\psdots[dotsize=0.14](2.29,1.0196875)
\psdots[dotsize=0.14](2.11,0.8196875)
\psdots[dotsize=0.14,fillstyle=solid,dotstyle=o](3.37,-0.9803125)
\psdots[dotsize=0.14,fillstyle=solid,dotstyle=o](2.49,0.7796875)
\psdots[dotsize=0.14,fillstyle=solid,dotstyle=o](1.25,-0.9803125)
\psdots[dotsize=0.14](0.85,-0.7403125)
\psdots[dotsize=0.14](1.05,-0.5603125)
\psdots[dotsize=0.14,fillstyle=solid,dotstyle=o](1.89,0.5796875)
\psdots[dotsize=0.14,fillstyle=solid,dotstyle=o](3.03,-1.0003124)
\psdots[dotsize=0.14](3.35,-0.5203125)
\psline[linewidth=0.04cm,linestyle=dotted,dotsep=0.16cm](2.71,0.4596875)(3.13,-0.1603125)
\psline[linewidth=0.04cm,linestyle=dotted,dotsep=0.16cm](1.73,0.3396875)(1.27,-0.2403125)
\psdots[dotsize=0.14](2.67,0.5196875)
\psdots[dotsize=0.14](2.73,-1.0003124)
\psline[linewidth=0.04cm,linestyle=dotted,dotsep=0.16cm](1.49,-0.9803125)(2.33,-1.0003124)
\usefont{T1}{ppl}{m}{n}
\rput(0.25453126,-1.4103125){$c$}
\usefont{T1}{ppl}{m}{n}
\rput(2.2845314,1.4296875){$a$}
\usefont{T1}{ppl}{m}{n}
\rput(3.8945312,-1.3703125){$b$}
\usefont{T1}{ptm}{m}{n}
\rput(2.0645313,-1.4303125){$A$}
\usefont{T1}{ptm}{m}{n}
\rput(3.4945312,0.1696875){$C$}
\usefont{T1}{ptm}{m}{n}
\rput(1.0445312,0.2296875){$B$}
\end{pspicture} 
}}

\begin{remark} Note that parts a. and b. of Lemma \ref{PurecxLemabc}
may be deduced solely from the properties 1. and 2. above.
\end{remark}

\begin{conjecture} \label{PurecxConj}

a. For each balanced degree triplet $(A,B,C)$ of type $n$, there exists
a triplet of pure free squarefree complexes over the polynomial ring
in $n$ variables whose degree sequences are given by 
$A$, $B$, and $C$. 

b. The Betti numbers of this triplet of complexes are uniquely determined by the
degree triplet, up to common scalar multiple.
\end{conjecture}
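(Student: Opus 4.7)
Part (a) of the conjecture (existence) appears genuinely hard in full generality: as the introduction indicates, only the case where two of the three complexes are linear is handled (via tensor complexes) in Section \ref{KonSec}. I would therefore concentrate on part (b) (uniqueness) conditional on existence. With existence assumed, the space of Betti-vectors $(\alpha_i, \beta_j, \gamma_k)$ compatible with a fixed balanced degree triplet $(A,B,C)$ is nonempty, so it suffices to exhibit $|A|+|B|+|C|-1$ linearly independent homogeneous linear relations on these Betti numbers; combined with a nonzero solution from existence, this forces the solution space to be exactly one-dimensional.

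The source of these relations is the Hilbert-series identity
$$\sum_i (-1)^i \alpha_i\, t^{a_i} \;=\; (1-t)^n \sum_j (-1)^j\, h(H^j(F^\dt);t)$$
satisfied by $F^\dt$, together with its two cyclic analogues for $\AD(F^\dt)$ and $(\AD)^2(F^\dt)$. By Proposition \ref{HexProDA} the squarefree-graded pieces $\dim_\kk H^j(F^\dt)_R$ can be read off from the ranks of the summands of the $j$-th linear strand of $\AD(F^\dt)$, and since each term in that strand is an $S(-d)^{\beta_k}$, one gets
$$h(H^j(F^\dt);t) \;=\; \sum_R \dim H^j(F^\dt)_R \cdot \left(\tfrac{t}{1-t}\right)^{|R|}$$
as an explicit linear expression in the $\beta_k$ lying in the $j$-th linear strand. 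Substituting back turns the Hilbert-series identity into a polynomial identity linear in the $\alpha_i$ and $\beta_k$; comparing coefficients of each power of $t$ yields a block of scalar equations. Applying the same procedure cyclically produces two more blocks relating $\beta_k$ to $\gamma_l$ and $\gamma_l$ to $\alpha_i$.

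The main obstacle is the rank count: one must show that these three blocks together impose exactly $|A|+|B|+|C|-1$ independent conditions. The balancing condition of Proposition \ref{PurecxPropBalance} should play a decisive role, constraining the degree ranges so that the redundancies between the three cyclic blocks are controlled. My plan is first to verify the count by hand in the classical Cohen-Macaulay case (where the relations specialize to the Herzog-K\"uhl equations) and on small examples such as Example \ref{PureEksRenecx}, and then to attempt a general argument by exhibiting a square submatrix of size $|A|+|B|+|C|-1$ whose nonvanishing determinant follows from the balancing condition. A conceptually cleaner alternative, worth trying in parallel, would be to repackage all three identities as the vanishing of a single rational function and deduce uniqueness from a dimension count on a suitable space of Laurent polynomials whose support is prescribed by the degree triplet $(A,B,C)$.
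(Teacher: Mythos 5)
Your reduction of part (b) to a rank statement is essentially the same setup as Section \ref{NumSec}: your Hilbert-series identities, read strand by strand via Proposition \ref{HexProDA}, encode exactly the transition equations $\hbe = A\hal$, $\hga = A\hbe$, $\hal = (-1)^n A\hga$ together with the vanishing of the signadjusted Betti numbers at nondegrees, and Proposition \ref{NumProAsystem} packages these into $r$ equations in the $r+1$ unknowns $\alpha_i$ (the $\beta$'s and $\gamma$'s being determined by Lemma \ref{NumLemAAA}). The gap is precisely at the step you flag as ``the main obstacle'': you propose to prove the linear independence of these equations directly, by exhibiting a square submatrix whose determinant is forced to be nonzero by the balancing condition of Proposition \ref{PurecxPropBalance}. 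The paper does not do this, and explicitly declines to: linear independence of the system is stated only as an expectation (see the Remark after Lemma \ref{NumLemEkviv}), not proved. No argument is known that extracts the nonvanishing of such a minor from balancing alone, and your proposal supplies none; it is a plan, not a proof.

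What the paper actually proves is the implication (a) $\Rightarrow$ (b), and it uses the existence hypothesis in a stronger way than you do. You invoke existence only for the given triplet, to produce one nonzero solution vector; but that does nothing toward the rank claim. The paper instead inducts on the number of internal nondegrees: given a nondegree on side $A$, it builds a modified balanced triplet $\tri^\prime = (A^\prime, B^\prime, C)$ with strictly fewer nondegrees, invokes part (a) \emph{for $\tri^\prime$} to get an honest pure triplet whose Betti numbers are all strictly positive, identifies those Betti numbers with the maximal minors of the coefficient matrix $X^\prime$, and then observes that after deleting one column the matrix $X$ for $\tri$ has a block-triangular form $\left[\begin{smallmatrix} T & 0 \\ Z & Y\end{smallmatrix}\right]$ with $T$ triangular and $Y$ exactly the submatrix of $X^\prime$ whose determinant is one of those nonzero minors; hence $X$ has full rank. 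The induction starts from triplets with nondegrees on only one side, where uniqueness is the classical Herzog--K\"uhl statement via Lemma \ref{SetLemCM}. So the existence conjecture is needed for a whole family of auxiliary triplets, not just the one at hand; if you want to avoid that, you must genuinely prove the determinant nonvanishing from balancing, which is an open problem in the paper, not a routine verification. Your assessment of part (a) is accurate: the paper only constructs triplets when two sides are linear (Theorem \ref{KonTeoremHoved}), and the general case remains conjectural.
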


\section{Constraints on the Betti numbers}

\label{NumSec}

In this section we give linear equations fulfilled by the
Betti numbers in a triplet of pure complexes. The number of equations
is one less than the number of Betti numbers, so we expect
a unique set of Betti numbers up to multiplication by a common scalar.
We prove that this is the case, provided part a. of Conjecture
\ref{PurecxConj} holds. In other words we prove that part a. of
the conjecture implies part b.

\subsection{Some elementary relations for binomial coefficients}
For nonnegative integers $p$ we have the binomial coefficient
$\binom{x}{p}$. When $p$ is a negative integer we set this coefficient
to be zero. The following identities hold in ${\mathbb Q}[x,y]$
and are repeatedly used in the proof of the below lemma.
\begin{itemize}
\item[1.] $\binom{x+y}{p} = \sum_{i = 0}^p \binom{x}{p-i} \binom{y}{i}$,
\cite[Ex. 4.3.3]{CoRo}.
\item[2.] $\binom{x}{p} = (-1)^p \binom{p-1-x}{p}$.
\end{itemize}

Let  $A = (a_{ij})$ be the $(n+1) \times (n+1)$-matrix
with $a_{ij} = (-1)^j \binom{n-j}{i}$ for $i,j = 0, \ldots, n$.
For instance when $n = 2$ this is the matrix 
\[ \left [ \begin{matrix}
1 & -1 & 1 \\
2 & -1 & 0 \\
1 & 0 & 0 \\

\end{matrix} \right ].
\]

\begin{lemma} \label{NumLemAAA}
$A^3 = (-1)^n \cdot I$
\end{lemma}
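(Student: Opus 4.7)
The plan is to reinterpret the matrix $A$ as a linear operator on polynomials, where the cube of the operator becomes a transparent computation with Möbius substitutions. Let $V = \bigoplus_{i=0}^n \kk\cdot x^i$, and let $T\colon V \to V$ be the linear map whose matrix in the basis $\{1,x,\dots,x^n\}$ is $A$, so that $T(x^j) = \sum_i (-1)^j \binom{n-j}{i} x^i$. Using the binomial theorem this simplifies to $T(x^j) = (-1)^j (1+x)^{n-j}$, and by linearity this is exactly the specialization of the closed form
\[ T(P)(x) \;=\; (1+x)^n\, P\!\left(\frac{-1}{1+x}\right). \]
One should first check that this substitution really does preserve $V$ (for $P=x^j$ of degree $\leq n$, the result is the polynomial $(-1)^j(1+x)^{n-j}$), so $T$ is a well-defined endomorphism of $V$ whose matrix is $A$.

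The main point is then that the Möbius transformation $\phi(x) = -1/(1+x)$ has order three: one computes $\phi^2(x) = -(1+x)/x$ and $\phi^3(x) = x$. Writing $T(P)(x) = (1+\phi^{-1}\text{-data})\ldots$ — more cleanly, iterating directly — the factors $(1+x)^n$ multiply along the orbit. One finds
\[ 1+\phi(x) = \frac{x}{1+x}, \qquad 1+\phi^2(x) = \frac{-1}{x}, \]
so that
\[ (1+x)^n \cdot (1+\phi(x))^n \cdot (1+\phi^2(x))^n \;=\; (1+x)^n \cdot \frac{x^n}{(1+x)^n}\cdot \frac{(-1)^n}{x^n} \;=\; (-1)^n. \]
Iterating the defining formula, $T^3(P)(x)$ equals this product of prefactors times $P(\phi^3(x)) = P(x)$, whence $T^3 = (-1)^n \cdot \mathrm{id}_V$, equivalently $A^3 = (-1)^n I$.

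The only step that requires a little care is the bookkeeping in the two compositions: verifying $T^2(P)(x) = x^n P(-(1+x)/x)$ by substituting $y = -1/(1+x)$ into $T(P)(y)$, and then substituting once more to get $T^3$. These are routine manipulations of rational functions, and the identities $(1+\phi(x))^n = x^n/(1+x)^n$ and $(1+\phi^2(x))^n = (-1)^n/x^n$ make the final prefactor $(-1)^n$ appear without any reorganization of binomial sums. No use of the identities (1) or (2) listed before the lemma is actually needed in this approach, although one could alternatively expand everything combinatorially and use identity (1) as the multiplicativity $(1+z)^{x+y} = (1+z)^x(1+z)^y$ underlying the polynomial computation. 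I expect no real obstacle beyond carefully tracking the substitutions.
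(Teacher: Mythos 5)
Your proof is correct, and it takes a genuinely different route from the paper. The paper proceeds entrywise: it first computes $A^2$ explicitly, showing $(A^2)_{ij}=(-1)^j\binom{j}{n-i}$ via the Vandermonde convolution and the upper-negation identity $\binom{x}{p}=(-1)^p\binom{p-1-x}{p}$, and then multiplies by $A$ once more, using the triangular supports of the rows and columns to see that the product collapses to $(-1)^n I$. You instead encode $A$ as the substitution operator $T(P)(x)=(1+x)^n P\bigl(-1/(1+x)\bigr)$ on the space of polynomials of degree at most $n$ (correct: on the basis, $T(x^j)=(-1)^j(1+x)^{n-j}$, matching the columns of $A$), observe that the M\"obius map $\phi(x)=-1/(1+x)$ has order three, and track the cocycle of prefactors $c(x)c(\phi(x))c(\phi^2(x))=(1+x)^n\cdot x^n(1+x)^{-n}\cdot(-1)^n x^{-n}=(-1)^n$, so $T^3=(-1)^n\,\mathrm{id}$. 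The only point needing care — that the iteration is legitimate even though $\phi$ is a rational function — is harmless: each $T^kP$ is a genuine polynomial, and the identities are identities in the rational function field, so the substitutions are valid in any characteristic; I checked your intermediate formula $T^2(P)(x)=x^nP(-(1+x)/x)$ against the paper's $A^2$ and it agrees. What your approach buys is a conceptual explanation (an order-three element of $PGL_2$ acting on forms of degree $n$, with the scalar $(-1)^n$ coming from the multiplier) and a computation free of binomial manipulations; what the paper's approach buys is the explicit closed form for the entries of $A^2$, obtained along the way, at the cost of heavier bookkeeping with the two binomial identities.
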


\begin{proof}
First we show that $A^2 = (b_{ij})$ where $b_{ij} = (-1)^j \binom{j}{n-i}$.
For instance when $n = 2$ this is 
\[ \left [ \begin{matrix}
0 & 0 & 1 \\
0 & -1 & 2 \\
1 & -1 & 1
\end{matrix} \right ].
\]
The $i$'th row in $A$ is
\begin{equation*}\binom{n}{i}, \,\, - \binom{n-1}{i}, \, \, \binom{n-2}{i},\,  \cdots
\end{equation*} 
Now 
\[ \binom{n-j}{i} = \binom{n-j}{n-j-i} = (-1)^{n-i-j}\binom{-i-1}{n-j-i}. \]
The $i$'th row of $A$ is then $(-1)^{n-i}$ multiplied with:
\[ \binom{-i-1}{n-i}, \, \, \binom{-i-1}{n-1-i}, \, \, \binom{-i-1}{n-2-i}, \, \cdots .\]
The $j$'th column in $A$ is $(-1)^j$ multiplied with the following: 
\[ \binom{n-j}{0}, \,\, \binom{n-j}{1}, \,\, \binom{n-j}{2}, \, \cdots \]
From this $b_{ij}$ is $(-1)^{n-i-j}$ multiplied with
\begin{equation} \binom{n-j}{0} \binom{-i-1}{n-i} +  \binom{n-j}{1}\binom{-i-1}{n-1-i} + \cdots 
\end{equation}
which by property 1. in the beginning of this section equals
\begin{equation}
\binom{n-j-i-1}{n-i} = (-1)^{n-i} \binom{j}{n-i}.
\end{equation}
Hence $b_{ij} = (-1)^j \binom{j}{n-i}$.

\medskip
To find $A^3$ note that the $i$'th row in $A^2$ is 
\[ \binom{0}{n-i},\,\, -\binom{1}{n-i}, \,\, \binom{2}{n-i}, \, \cdots  .\]
Note that
\[ \binom{j}{n-i} = \binom{j}{j+i-n} = (-1)^{j+i-n}\binom{i-n-1}{j+i-n}.\]
Hence row $i$ is $(-1)^{n-i}$ multiplied with 
\[ \binom{i-n-1}{i-n}, \, \, \binom{i-n-1}{i-n+1}, \, \, \binom{i-n-1}{i-n+2}, \, \cdots . \]

The $j$'th column in $A$ is $(-1)^j$ multiplied with
\[ \binom{n-j}{0}, \, \, \binom{n-j}{1}, \, \, \binom{n-2}{2}, \, \cdots . \]
Since 
\[ \binom{n-j}{i} = \binom{n-j}{n-j-i} \]
this column becomes
\[\binom{n-j}{n-j}, \binom{n-j}{n-j-1}, \binom{n-j}{n-j-2}, \cdots .\]

The first nonzero position in the $i$'th row is $n-i$. The last
nonzero position in the $j$'th column is $n-j$. 
Hence if $n-j < n-i$, equivalently $i < j$, the product of
the $i$'th row and $j$'th column is zero. 
On the other hand if $i \geq j$ the product is 
$(-1)^{n-i-j}$ multiplied with 
\[ \binom{i-1-j}{i-j} = (-1)^{i-j}\binom{0}{i-j} 
= \begin{cases} 1 & i = j \\ 0 & i > j
\end{cases}. \]
Hence we obtain $A^3 = (-1)^n \cdot I$. 
\end{proof}

\subsection{Linear equations for the Betti numbers}
Let $F^\dt$ be the pure free squarefree complex
\begin{equation} \label{NumLigF}
 F^\dt: S(-a_0)^{\alpha_{0}} \vpil S(-a_1)^{\alpha_{1}} \vpil \cdots
\vpil S(-a_r)^{\alpha_{r}}.
\end{equation}
Let $\hal_{a_i} = (-1)^{l(a_i)}
\cdot \alpha_{i}$ where $l(a_i)$ is the linear strand containing the term
$S(-a_i)^{\alpha_i}$, be the {\it signadjusted} Betti numbers. We set
$\hal_d  = 0$ if $d$ is not a degree of $F^\dt$. Note that 
these signadjusted Betti numbers are parametrized by the internal degrees.
Note also that 
\[ (-1)^{l(a_i)} = (-1)^{i + a_i + l(a_0) - a_0}. \]

Assume also that $\AA \circ \DD (F^\dt)$ is a pure complex
\[ S(-b_0)^{\beta_{0}} \vpil \cdots \vpil S(-b_{r^\prime})^{\beta_{{r^\prime}}}. \]
Recall  that the
$i$'th homology module of $F^\dt$ is transferred to the
$i$'th linear strand of $\AD (F^\dt)$.

Suppose the $i$'th homology module of $F^\dt$ is nonzero and let $d$ be 
a degree for which the $d$'th graded part of this module is nonzero.
This module is squarefree and the dimension of its squarefree part in 
degree $d$ (recall this notion in Subsection \ref{SetSubsecSqf}) is 
\begin{equation} \label{NumLigSqfri}
(-1)^{i+l(a_0) - a_0}[\alpha_{a_0} \binom{n-a_0}{n-d} - \alpha_{a_1}\binom{n-a_1}{n-d}
+ \alpha_{a_2} \binom{n-a_2}{n-d} + \cdots ].
\end{equation}
This will be equal to $(-1)^{i}\hbe_{n-d}$.
The equations (\ref{NumLigSqfri}) when $d$ varies are then the same as
\begin{equation} \label{NumLigAtilB}
\hbe = A \cdot \hal.
\end{equation}
If furthermore $(\AD)^2 (F^\dt)$ is pure we get in the same way
\begin{eqnarray} \label{NumLigBtilC}
\hat{\gamma} &  = & A \cdot \hat{\beta} 
\\ \label{NumLigCtilA}
\hat{\hat{\alpha}} & = & A \cdot \hat{\gamma},
\end{eqnarray}
where $\hat{\hat{\alpha}} = (-1)^n \hal$ due to the shift $n$ of linear
strands of the functor $(\AD)^3$. 

If $F^\dt, \AD (F^\dt)$ and $(\AD)^2 (F^\dt)$ are
all pure then clearly the following equations hold

\begin{align} \label{NumLigAo}
&\hat{\alpha_i} = 0  \mbox { for all nondegrees $i$ of } 
F^\dt  \mbox{ in } [0,n], \\
\label{NumLigBo} &\hat{\beta_i} = 0  \mbox { for all nondegrees $i$ of }
(\AD) (F^\dt) 
\mbox{ in } [0,n], \\
\label{NumLigCo} &\hat{\gamma_i} = 0  \mbox { for all nondegrees $i$ of } 
(\AD)^2 (F^\dt) 
\mbox{ in } [0,n].
\end{align}
In addition we must have the equations 
(\ref{NumLigAtilB}), (\ref{NumLigBtilC}) and 
(\ref{NumLigCtilA}) above (where any two of these
determine the third by Lemma \ref{NumLemAAA}).

\begin{lemma} \label{NumLemEkviv}
The equations $\hal_i = 0$ for $i = 0, \ldots, c-1$
are equivalent to the equations $\hbe_{n-i} = 0$ for $i = 0, \ldots, c-1$. 

Similarly the equations $\hbe_i=0$ and $\hga_{n-i} = 0$
for $i = 0, \ldots, a-1$ are equivalent, and $\hga_i = 0$
and $\hal_{n-i} = 0$ for $i = 0, \ldots, b-1$ are equivalent.
\end{lemma}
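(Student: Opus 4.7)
My plan is to use the matrix relation $\hat{\beta} = A \cdot \hat{\alpha}$ from equation (\ref{NumLigAtilB}), where $a_{ij} = (-1)^j \binom{n-j}{i}$, and to read off the components $\hat{\beta}_{n-i}$ in terms of the $\hat{\alpha}_j$. Explicitly,
\[ \hat{\beta}_{n-i} \;=\; \sum_{j=0}^{n} (-1)^j \binom{n-j}{n-i} \hat{\alpha}_j. \]
The key observation is that $\binom{n-j}{n-i}$ vanishes whenever $j > i$, so only the terms with $j = 0, 1, \ldots, i$ contribute, and the coefficient of $\hat{\alpha}_i$ is $(-1)^i \binom{n-i}{n-i} = (-1)^i \in \{\pm 1\}$.

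Consequently, as $i$ ranges over $0, 1, \ldots, c-1$, the linear expressions $\hat{\beta}_{n-i}$ involve only the variables $\hat{\alpha}_0, \hat{\alpha}_1, \ldots, \hat{\alpha}_{c-1}$, and the resulting $c \times c$ system is lower triangular with $\pm 1$ on the diagonal, hence invertible. One direction is immediate: if $\hat{\alpha}_j = 0$ for $j = 0, \ldots, c-1$ then each $\hat{\beta}_{n-i}$ vanishes since every contributing $\hat{\alpha}_j$ is zero. Conversely, from $\hat{\beta}_n = \hat{\alpha}_0 = 0$ one solves downward for $\hat{\alpha}_1, \hat{\alpha}_2, \ldots, \hat{\alpha}_{c-1}$ by back-substitution. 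This proves the first equivalence.

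The remaining two equivalences follow by applying the identical argument with $(\hat{\alpha}, \hat{\beta}, c)$ replaced first by $(\hat{\beta}, \hat{\gamma}, a)$ using $\hat{\gamma} = A \cdot \hat{\beta}$ from (\ref{NumLigBtilC}), and then by $(\hat{\gamma}, \hat{\alpha}, b)$ using $(-1)^n \hat{\alpha} = A \cdot \hat{\gamma}$ from (\ref{NumLigCtilA}); the global sign $(-1)^n$ in the last relation is absorbed into the triangular diagonal entries, which remain $\pm 1$. There is no genuine obstacle here: the whole argument reduces to the single observation that $\binom{n-j}{n-i} = 0$ for $j > i$, making the matrix $A$ lower-antitriangular in the relevant corner.
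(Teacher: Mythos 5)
Your proof is correct and uses the same idea as the paper: the transition matrix $A$ is anti-triangular because $\binom{n-j}{n-i}=0$ for $j>i$, with $\pm1$ entries on the anti-diagonal, so the relevant corner subsystem is invertible and the two sets of vanishing conditions are equivalent. The paper's proof simply exhibits this triangular shape of $A$; you have spelled out the same back-substitution argument in detail.
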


\begin{proof}
This is due to the transition matrix $A$ having the triangular
form 
\[ \left \lfloor \begin{matrix} \cdot & \cdot & \cdot & \cdot & \cdot \\  
                           * & * & * & 0 & \cdots \\
                           * & * & 0 & \cdots & \\
                           * & 0 & \cdots & & \\
           \end{matrix} \right \rfloor.
\]
\end{proof}

\begin{corollary}
Given a balanced degree triangle. The $3n+3$ signadjusted Betti numbers
$\hat{\alpha_i}, \hat{\beta_i}, \hat{\gamma_i}$, $i = 0, \ldots, n$
fulfill the equations (\ref{NumLigAtilB}), (\ref{NumLigBtilC}), 
(\ref{NumLigCtilA}), (\ref{NumLigAo}), (\ref{NumLigBo}), 
and (\ref{NumLigCo}), which may be reduced to $3n+2$ natural equations.
\end{corollary}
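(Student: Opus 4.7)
The plan is to obtain the count $3n+2$ by two independent reductions applied to the six listed equation sets, using the two preceding lemmas. This step is essentially careful bookkeeping, and I anticipate no conceptual obstacle; the challenge is simply keeping the pairings straight.

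First I would tabulate the raw counts. The three matrix identities (\ref{NumLigAtilB}), (\ref{NumLigBtilC}), (\ref{NumLigCtilA}) each contribute $n+1$ scalar equations, giving $3(n+1)$ in all. For the vanishing equations, since the degrees of $F^\dt$ lie in $[c, n-b]$ with $e_A$ internal gaps, the nondegrees of $F^\dt$ in $[0,n]$ number $b + c + e_A$; summing the analogous expressions cyclically for $\AD(F^\dt)$ and $(\AD)^2(F^\dt)$, and using $a+b+c = n - e$ from Proposition \ref{PurecxPropDeg}, the vanishing block totals $2(a+b+c) + e = 2n - e$.

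Next I would apply the two reductions. Lemma \ref{NumLemAAA} gives $A^3 = (-1)^n I$, so any two of the three matrix identities imply the third; this cuts the matrix block from $3(n+1)$ to $2(n+1) = 2n+2$. Then Lemma \ref{NumLemEkviv} identifies three blocks of paired vanishing equations: the $c$ leading-nondegree equations $\hal_0 = \cdots = \hal_{c-1} = 0$ of $F^\dt$ coincide with the $c$ trailing-nondegree equations $\hbe_{n-c+1} = \cdots = \hbe_n = 0$ of $\AD(F^\dt)$, and cyclically the $a$ leading equations of $\AD(F^\dt)$ pair with the trailing ones of $(\AD)^2(F^\dt)$, and the $b$ leading equations of $(\AD)^2(F^\dt)$ pair with the trailing ones of $F^\dt$. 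These pairings eliminate $a+b+c = n - e$ equations, reducing the vanishing block from $2n - e$ to $n$. Summing the two reduced blocks gives $(2n+2) + n = 3n+2$, as claimed.
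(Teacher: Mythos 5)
Your proposal is correct and follows essentially the same route as the paper: the vanishing block of $2(a+b+c)+e = 2n-e$ equations is reduced by the $a+b+c$ pairings of Lemma \ref{NumLemEkviv} to $n$ equations, and by Lemma \ref{NumLemAAA} only two of the three transition identities are retained, contributing $2n+2$, for a total of $3n+2$. The only cosmetic difference is that you phrase the paired vanishing equations as "coinciding," whereas they are equivalent modulo the transition relations (which is exactly what Lemma \ref{NumLemEkviv} asserts), so the bookkeeping is identical to the paper's.
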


\begin{remark} We expect these equations to be linearly independent. Hence there
would be a unique solution up to scalar multiple.
\end{remark}

\begin{proof}
There are $c+b+e_A$ equations of the form $\hal_i = 0$. Similarly
there are $a+c+e_B$ equations of the form $\hbe_i = 0$, and 
$a+b+e_C$ equations of the form $\hga_i = 0$. This give a total
of $2a+2b+2c + e$ equations. However by the above Lemma \ref{NumLemEkviv} there are
$a+b+c$ dependencies among them, giving $a+b+c + e = n$ equations.
In addition the transition equations (\ref{NumLigBtilC}) and (\ref{NumLigCtilA})
give $2n+2$ further equations, a total of $3n+2$. 
\end{proof}

The complex $F^\dt$ is 
\[ S(-n+b)^{\alpha_{r_0}} \pil \cdots \pil S(-c)^{\alpha_0}. \]
Its Alexander dual
$\AA (F^\dt)$ 
equals (up to translation) $\DD \circ
(\AD)^2 F^\dt$ which is 
\[ S(-n+b)^{\gamma_{0}} \pil \cdots \pil S(-a)^{\gamma_{r_2}}. \]
(Note that $\gamma_0 = \alpha_{r_0}$.) 
Let $v_1 < \cdots < v_{e_C}$ be the internal nondegrees of $\AA (F^\dt)$.

The complex $\DD (F^\dt)$ is 
\[ S(-n+c)^{\alpha_0} \pil \cdots \pil S(-b)^{\alpha_{r_0}} \]
and then its Alexander dual $\AD (F^\dt)$ is 
\[ S(-n+c)^{\beta_{r_1}} \pil \cdots \pil S(-a)^{\beta_0}. \]
(Note that $\beta_{r_1} = \alpha_0$.) 
Let $u_1 < \cdots < u_{e_B}$ be the internal nondegrees of $\AD (F^\dt)$.

\begin{proposition} \label{NumProAsystem}
Given a triplet of pure
free squarefree complexes.  
Let $a_0 < \cdots < a_r$ be the degrees of the first complex $F^\dt$ and 
$\overline{a}_r < \cdots < \overline{a}_0$ be the degrees of the dual 
$\DD (F^\dt)$. 
By the transition 
equations (\ref{NumLigAtilB}), (\ref{NumLigBtilC}), and (\ref{NumLigCtilA}), 
the equations (\ref{NumLigAo}), (\ref{NumLigBo}), and (\ref{NumLigCo}) are 
equivalent to
the following equations for the $(r+1)$ nonzero Betti numbers $\alpha_{i}$. 
\begin{align} \label{NumLigAen}
\alpha_{0} \binom{a_0}{v_i} - & \alpha_{1} \binom{a_1}{v_i} + \cdots + 
(-1)^r \alpha_{r} \binom{a_r}{v_i}  = 0, & i = 1, \ldots, e_C \\
\label{NumLigAto}
\alpha_{0} \binom{\overline{a}_0}{u_i} - 
& \alpha_{1} \binom{\overline{a}_{1}}{u_i} + \cdots + 
(-1)^r \alpha_{r} \binom{\overline{a}_r}{u_i} = 0, & i = 1, \ldots, e_B \\
\label{NumLigAtre}
\alpha_{0} \binom{a_0}{j} - & \alpha_{1} \binom{a_1}{j} + \cdots + 
(-1)^r \alpha_{r} \binom{a_r}{j} = 0, &  j = 0, \ldots, a-1 
\end{align}
The total number of these equations $e_C + e_B + a$, equals $r$.
\end{proposition}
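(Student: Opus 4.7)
My plan is to translate each vanishing equation on $\hal$, $\hbe$, $\hga$ into a linear equation on the nonzero Betti numbers $\alpha_0, \dots, \alpha_r$ of $F^\dt$, and then to identify which of the resulting equations are tautological and which repeat through Lemma \ref{NumLemEkviv}. First I would unwind the relation $\hal_{a_k} = (-1)^{l(a_k)}\alpha_k$ (with $\hal_j = 0$ when $j$ is not a degree of $F^\dt$), apply the transition formulas $\hbe = A\hal$ and $\hga = A^2\hal$ with the explicit entries $a_{ij} = (-1)^j\binom{n-j}{i}$ and $(A^2)_{ij} = (-1)^j\binom{j}{n-i}$ from the proof of Lemma \ref{NumLemAAA}, and substitute $(-1)^{l(a_k)} = (-1)^{k+a_k+l(a_0)-a_0}$. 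After cancelling the global sign $(-1)^{l(a_0)-a_0}$, a direct computation produces
\[ \hbe_k = 0 \,\iff\, \sum_{i=0}^r (-1)^i \binom{\overline{a}_i}{k}\alpha_i = 0, \qquad \hga_i = 0 \,\iff\, \sum_{k=0}^r (-1)^k \binom{a_k}{n-i}\alpha_k = 0. \]

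Next I would sort the equations (\ref{NumLigAo})--(\ref{NumLigCo}) into automatic ones, duplicates, and genuine constraints on the $\alpha_k$. Equations (\ref{NumLigAo}) hold by the very definition of $\hal$. For the upper boundary portion of (\ref{NumLigBo}): when $i \in [n-c+1, n]$ every $\overline{a}_k \leq n-c < i$, so $\binom{\overline{a}_k}{i}=0$ and $\hbe_i = 0$ automatically; symmetrically, for $i \in [0, b-1]$ in (\ref{NumLigCo}) every $a_k \leq n-b < n-i$, so $\hga_i = 0$ automatically. The genuinely non-trivial vanishings are then $\hbe_i = 0$ on $[0, a-1]$, $\hbe_{u_t} = 0$ at internal nondegrees of $\AD(F^\dt)$, $\hga_i = 0$ on $[n-a+1, n]$, and $\hga_{w_t} = 0$ at internal nondegrees $w_t$ of $(\AD)^2(F^\dt)$. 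By Lemma \ref{NumLemEkviv}, the first and third of these families are equivalent as systems, so we may keep only the third; what remains are $e_B + a + e_C$ equations in total.

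Finally I would match each surviving family to one of the stated equations. Substituting $i = u_t$ into the $\hbe$-formula yields (\ref{NumLigAto}) directly. Writing $j = n-i$ for $i \in [n-a+1, n]$ in the $\hga$-formula rewrites it as $\sum_k (-1)^k\binom{a_k}{j}\alpha_k = 0$ for $j \in [0, a-1]$, which is (\ref{NumLigAtre}). For the internal nondegree family, I would exploit the bijection $v_t = n - w_{e_C + 1 - t}$ between the nondegrees $v_1 < \cdots < v_{e_C}$ of $\AA(F^\dt)$ and the nondegrees $w_1 < \cdots < w_{e_C}$ of its dual $(\AD)^2(F^\dt)$ (which holds since $\AA(F^\dt) = \DD((\AD)^2(F^\dt))$ up to translation); substituting $i = w_t$ and reindexing yields (\ref{NumLigAen}). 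The count $e_C + e_B + a = r$ follows from $r + 1 = |A| = a + e + 1 - e_A$ (Lemma \ref{PurecxLemabc}(a)) together with $e = e_A + e_B + e_C$. The main obstacle will be the sign and indexing bookkeeping needed to derive the clean closed forms for $\hbe_k$ and $\hga_i$ in terms of the $\alpha_k$; once those formulas are in hand, the rest reduces to a systematic application of Lemma \ref{NumLemEkviv} together with the binomial-vanishing range arguments above.
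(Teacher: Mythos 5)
Your proposal is correct and follows essentially the same route as the paper's proof: express $\hbe$ and $\hga$ in terms of the $\alpha_k$ via the transition matrix, identify the automatic and duplicate vanishings (the latter via Lemma \ref{NumLemEkviv}), match the surviving families to (\ref{NumLigAen})--(\ref{NumLigAtre}), and count. The only differences are presentational — you write out the closed forms for $\hbe_k$ and $\hga_i$ and handle the boundary ranges by direct binomial vanishing where the paper invokes Lemma \ref{NumLemEkviv} or leaves the symmetric case implicit.
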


\begin{proof}
The last part is because 
\[ r + e_A = n-b-c, \mbox{ and } a+b+c + e_A + e_B + e_C = n. \]

By the transition equation (\ref{NumLigAtilB}) the set of equations 
(\ref{NumLigAto})
is equivalent to $\hbe_{u_i} = 0$ for each nondegree $u_i$ of $\AD(F^\dt)$
in the interval $[a,n-c]$.
The vanishing of $\hbe_j$ for $j \in [n-c+1,n]$ is by Lemma \ref{NumLemEkviv}
equivalent to $\hal_j = 0$ for $j \leq c-1$. 
The vanishing of $\hbe_j$ for $j \in [0, a-1]$ is again by the transition 
equation (\ref{NumLigAtilB}) equivalent to the equations (\ref{NumLigAtre}).
  
In the same way the vanishing of $\hga_j$ for each nondegree $j$ of 
$(\AD)^2(F^\dt)$ in the interval $[b, n-a]$ is equivalent to the equations
(\ref{NumLigAen}). The vanishing of $\hga_j$ for $j$ in $[n-a+1, n]$ are
by Lemma \ref{NumLemEkviv} equivalent to the vanishing of $\hbe_j$ for 
$j$ in $[0, a-1]$ which are again equivalent to equations (\ref{NumLigAtre}).

\end{proof}

We also get corresponding equations for the $\beta_i$ and the $\gamma_i$.

\begin{corollary}
Given a balanced degree triplet. If the equations
(\ref{NumLigAen}), (\ref{NumLigAto}), and (\ref{NumLigAtre})
for the Betti numbers
$\alpha_i$ of the pure complex $F^\dt$
has a $k$-dimensional solution set, then the corresponding equations for
the Betti numbers $\beta_i$ of the pure complex $\AD (F^\dt)$ has a 
$k$-dimensional solution set, and similarly  for the
Betti numbers $\gamma_i$ of $(\AD)^2 (F^\dt)$.
\end{corollary}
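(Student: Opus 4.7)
The plan is to reduce everything to the cyclic symmetry that already underlies Proposition \ref{NumProAsystem}. That proposition collapses the combined linear system in the signadjusted triple $(\hal, \hbe, \hga)$ to an equivalent system in $\hal$ alone, but nothing in its derivation privileges $F^\dt$ among the three cyclically linked complexes. So I would prove the corollary by showing that the $\alpha$-, $\beta$-, and $\gamma$-reduced systems are each isomorphic to a common ambient solution space.

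Concretely, let $W \sus \QQ^{n+1} \oplus \QQ^{n+1} \oplus \QQ^{n+1}$ denote the solution space of the combined system, i.e.\ the transition equations \eqref{NumLigAtilB}, \eqref{NumLigBtilC}, \eqref{NumLigCtilA} together with the vanishing equations \eqref{NumLigAo}, \eqref{NumLigBo}, \eqref{NumLigCo}. The projection $\pi_\alpha : W \lpil \QQ^{n+1}$ onto the $\hal$-factor is injective, since the transition equations force $\hbe = A \hal$ and $\hga = A \hbe$. By Proposition \ref{NumProAsystem}, its image is precisely the solution space of the reduced $\alpha$-system \eqref{NumLigAen}, \eqref{NumLigAto}, \eqref{NumLigAtre}.

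Since $A^3 = (-1)^n I$ by Lemma \ref{NumLemAAA}, the matrix $A$ is invertible, so the projections $\pi_\beta$ and $\pi_\gamma$ of $W$ onto the other two factors are also injective. Applying Proposition \ref{NumProAsystem} with $\AD(F^\dt)$, respectively $(\AD)^2(F^\dt)$, in the role of the starting complex identifies the images of $\pi_\beta$ and $\pi_\gamma$ with the analogous reduced systems in the $\beta_i$ and the $\gamma_i$. Hence all three reduced solution spaces are isomorphic to $W$ via the linear isomorphisms $\hbe = A \hal$ and $\hga = A^2 \hal$, and so share the common dimension $k$.

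The one point that genuinely needs verification is that the combined system and its vanishing constraints \eqref{NumLigAo}, \eqref{NumLigBo}, \eqref{NumLigCo} are intrinsic to the balanced degree triplet and invariant under its cyclic rotation, so that Proposition \ref{NumProAsystem} may legitimately be reapplied with either of the other two complexes as the starting one. This is immediate once the roles of $(a,b,c)$ and $(e_A,e_B,e_C)$ are cyclically permuted; no further computation is needed, since the content of the argument has already been packaged into Lemma \ref{NumLemAAA} and Proposition \ref{NumProAsystem}.
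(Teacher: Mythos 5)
Your proof is correct and follows essentially the same route as the paper: the paper's own argument is precisely that, via the transition equations (\ref{NumLigAtilB}), (\ref{NumLigBtilC}), (\ref{NumLigCtilA}), each of the three reduced systems is equivalent to the common vanishing system (\ref{NumLigAo}), (\ref{NumLigBo}), (\ref{NumLigCo}). Your version merely makes the bookkeeping explicit (injectivity of the projections from the combined solution space, invertibility of $A$ from Lemma \ref{NumLemAAA}, and the cyclic reapplication of Proposition \ref{NumProAsystem}), which is a fuller write-up of the same idea.
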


\begin{proof}
By the transition
equations (\ref{NumLigAtilB}), (\ref{NumLigBtilC}), and (\ref{NumLigCtilA}), 
all these equation systems are equivalent to
the equations (\ref{NumLigAo}), (\ref{NumLigBo}), and (\ref{NumLigCo}).
\end{proof}

\subsection{Uniqueness of  Betti numbers}

Given a balanced degree triplet $\tri = (A,B,C)$. The set 
$A$ is a subset of $[c, n-b]$, containing the end points of this
interval. Let us suppose that there is an internal nondegree of $A$,
i.e. $A$ is a proper subset of $[c, n-b]$. 
Let $A$ contain $[c, c+t-1]$ but not $c+t$. 
The set $B$ is a subset of $[a, n-c]$ containing the endpoints.
Let $s\geq 1$ be maximal such that $\trs{B} \sus [c, n-b]$ 
is disjoint from the interval 
$[c+1, c+s-1]$. Since the degree triangle is balanced we have $s \leq t$.
Let $\tri^\prime = (A^\prime, B^\prime, C)$ where 
\[ A^\prime = A \cup \{ c+t \} \backslash [c, c+s-1], 
\quad \overline{B^\prime} = \overline{B} \backslash \{ c\}. \]

\scalebox{1} 
{\put(90,-15){
\begin{pspicture}(0,-1.163125)(7.7090626,1.163125)
\psline[linewidth=0.04](0.65,-0.6103125)(1.37,0.5496875)(1.95,-0.5503125)(1.97,-0.5903125)(0.67,-0.5903125)
\usefont{T1}{ptm}{m}{n}
\rput(0.25453126,-0.7503125){$c$}
\usefont{T1}{ptm}{m}{n}
\rput(2.3745313,-0.6603125){$b$}
\usefont{T1}{ptm}{m}{n}
\rput(1.3245312,0.8396875){$a$}
\usefont{T1}{ptm}{m}{n}
\rput(1.2645313,-0.9203125){$A$}
\usefont{T1}{ptm}{m}{n}
\rput(0.65453125,0.0396875){$B$}
\usefont{T1}{ptm}{m}{n}
\rput(2.1745312,-0.0403125){$C$}
\usefont{T1}{ptm}{m}{n}
\rput(5.284531,-0.8503125){$c+s$}
\usefont{T1}{ptm}{m}{n}
\rput(7.3345313,-0.8403125){$b$}
\usefont{T1}{ptm}{m}{n}
\rput(6.4245315,0.9596875){$a$}
\psline[linewidth=0.04cm,arrowsize=0.05291667cm 2.0,arrowlength=1.4,arrowinset=0.4]{<-}(4.47,-0.1903125)(3.35,-0.1703125)
\psline[linewidth=0.04](5.67,-0.6303125)(6.43,0.5696875)(7.05,-0.5703125)(7.07,-0.5903125)(5.73,-0.5903125)(5.71,-0.6103125)
\usefont{T1}{ptm}{m}{n}
\rput(6.3945312,-0.8803125){$A^\prime$}
\usefont{T1}{ptm}{m}{n}
\rput(7.244531,0.0196875){$C^\prime$}
\usefont{T1}{ptm}{m}{n}
\rput(5.6945312,0.0196875){$B^\prime$}
\end{pspicture} 
}}

\begin{lemma}
If $\tri$ is a balanced degree triplet, then $\tri^\prime$ is 
a balanced degree triplet.
\end{lemma}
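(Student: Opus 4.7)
The plan is to verify the three defining properties of a balanced degree triplet for $\Delta' = (A', B', C)$, taking the new parameters to be $a' = a$, $b' = b$, $c' = c + s$. Property (1) amounts to checking that $c+s$ is the smallest element of $A'$ (which holds since $[c, c+s-1] \subset A$ and $c+s \in A \cup \{c+t\}$ because $s \leq t$), that $n-b$ remains in $A'$, and similarly that $a, n-c-s$ are the endpoints of the interval containing $B' = B \setminus \{n-c\}$ and that $b, n-a$ still bound $C$. Property (2), the equation $n = a' + b' + c' + e_{A'} + e_{B'} + e_C$, follows from the direct counts $e_{A'} = e_A - 1$ (the interval shrinks by $s$ and $|A'| = |A| - s + 1$) and $e_{B'} = e_B - s + 1$.

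For the balance condition on $A'$ and $\overline{B'}$ with endpoint $c+s$, a direct computation gives
\[
|A' \cap [c+s, v]| - |[c+s, v] \setminus \overline{B'}| \;=\; f(v) - 1 + \mathbf{1}[c+t \leq v],
\]
where $f(v) = |A \cap [c, v]| - |[c, v] \setminus \overline{B}|$; the relevant identities are that $\overline{B'}$ and $\overline{B}$ agree on $[c+s, v]$ and that $[c, c+s-1] \setminus \overline{B} = [c+1, c+s-1]$ has $s-1$ elements. For $v \geq c+t$ the right-hand side is $\geq 1$ by the original balance, while for $v \in [c+s, c+t-1]$ one has $[c, v] \subset A$ so $|A \cap [c,v]| = v-c+1$, and $\{c, c+s\} \subset \overline{B}$ forces $f(v) \geq 2$. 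For the balance of $B'$ and $\overline{C}$ the only nontrivial case is $v \geq n-c$, where $|B' \cap [a,v]| = |B| - 1$; the tightest case $v = n$ reduces, after expanding $|B|$ and $|C|$ and using property (2), to the standing hypothesis $e_A \geq 1$.

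The crux is the balance condition on $C$ and $\overline{A'}$ with endpoint $b$. Setting $g(v) = |C \cap [b, v]| - |[b, v] \setminus \overline{A}|$, the modification satisfies
\[
|[b, v] \setminus \overline{A'}| = |[b, v] \setminus \overline{A}| + |[n-c-s+1, n-c] \cap [b, v]| - \mathbf{1}[n-c-t \leq v],
\]
splitting the required inequality into four $v$-ranges; the only nontrivial ones are $v \in [n-c-s+1, n-c]$ (need $g(v) \geq v - n + c + s$) and $v > n-c$ (need $g(v) \geq s$). Two structural inputs handle both. First, the reflection $[n-c-t+1, n-c] \subset \overline{A}$ of $[c, c+t-1] \subset A$ makes $g$ non-decreasing on $[n-c-s+1, n-c]$ and simplifies $g(v)$ there to $|C \cap [b, v]| - e_A$; combining this with the crude bound $|C \cap [b, v]| \geq |C| - (n-a-v)$ and property (2) collapses the inequality to $a + e_B + 1 \geq s$. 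Second, $e_B \geq s - 1$ since $\overline{B}$ is disjoint from the $s-1$ integers in $[c+1, c+s-1]$; this yields both $a + e_B + 1 \geq s$ and, for $v > n - c$ where $g$ is non-increasing with $g(n) = e_B + 1$, the bound $g(v) \geq s$. The main obstacle is this third condition: one must choreograph the four $v$-ranges and show that in each the inequality collapses, via the dimension equation, to either $e_A \geq 1$ or $e_B \geq s - 1$, both secured by the hypotheses on $\Delta$ and by the maximality of $s$.
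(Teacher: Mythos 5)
Your argument is correct and follows essentially the same route as the paper: a direct verification of the definition, counting that $e_{A'}=e_A-1$ and $e_{B'}=e_B-s+1$ so that condition 2 persists, and then checking the three balance conditions using the original balance of $\tri$, the maximality of $s$ (which gives $c+s\in\overline{B}$ and $e_B\ge s-1$) and endpoint counts such as $e_A\ge 1$ --- you simply make explicit the case analysis the paper compresses into ``since $\tri$ was'' and ``the same argument as in case 1''. One cosmetic point: the crude bound $|C\cap[b,v]|\ge |C|-(n-a-v)$ is only valid for $v\le n-a$; for $v\in(n-a,\,n-c]$ (possible when $a>c$) one has $|C\cap[b,v]|=|C|$, and the required inequality follows at once from $e_B+1\ge s$, so nothing is lost.
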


\begin{proof}
If $\tri$ has $e$ internal nondegrees, 
then clearly $\tri^\prime$ has $e-s$ internal nondegrees.
(We remove one nondegree from $A$ and $s-1$ from $\overline{B}$.)
Since $\tri^\prime$ has parameters $c+s, b$ and $a$, the
equation $a+b+c+e = n$ continues to hold when passing from 
$\tri$ to $\tri^\prime$. Viewing $\tri^\prime$ from the corner $c+s$
we see it is balanced here since $\tri$ was. Viewing $\tri^\prime$ from
corner $a$ we see that it is balanced in the interval 
$[v, n-a]$ for $v \geq \max \{ c+s, b\}$ since $\tri$ was, and 
when $v \leq \max \{ c+s, b\}$ we can use the same argument as in 
case 1. in the proof of Proposition \ref{PurecxPropBalance}.
The last case of corner $b$ goes in the same way.
\end{proof}

\begin{proposition}
Let $\tri$ be a balanced triplet, with an internal nondegree on one of its
sides, say side $A$. Then if i) there exists a triplet of pure
free squarefree complexes for the degree triplet $\tri^\prime$, 
and ii) the equation system for $\tri^\prime$ has a one-dimensional
solution, then the equation system for $\tri$ also has a one-dimensional
solution.
\end{proposition}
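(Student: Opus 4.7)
By Proposition \ref{NumProAsystem}, the $\tri$-equation system consists of $r = |A|-1$ equations in the $|A|$ unknowns $\alpha_0, \ldots, \alpha_r$, so its solution space $K$ has dimension at least $1$; the task is to show $\dim K \le 1$. The plan is to construct an injective linear map $\varphi : K \to \ker M_{\tri^\prime}$, where $M_{\tri^\prime}$ denotes the coefficient matrix of the $\tri^\prime$-system; together with hypothesis (ii) this will give $\dim K \le \dim \ker M_{\tri^\prime} = 1$.

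Given $(\alpha_i)_{i \in A} \in K$, I would define the image $\varphi((\alpha_i)) = (\alpha^\prime_i)_{i \in A^\prime}$ by setting $\alpha^\prime_i = \alpha_i$ for $i \in A \cap A^\prime$ and choosing $\alpha^\prime_{c+t}$ so that all $\tri^\prime$-equations are satisfied. The crucial observation is that subtracting the $\tri^\prime$-equations of type (\ref{NumLigAtre}) from the corresponding $\tri$-equations yields scalar compatibility conditions of the form
\[
(-1)^{\epsilon_t}\,\alpha^\prime_{c+t} \binom{c+t}{j} \;=\; \sum_{k=0}^{s-1} (-1)^{\epsilon_k}\,\alpha_{c+k} \binom{c+k}{j},
\]
for $j \in [0,a-1]$ and, via (\ref{NumLigAen}), also at the internal nondegrees of $C$; meanwhile the new type-(\ref{NumLigAto}) equations in the $\tri$-system --- those coming from the $s-1$ additional nondegrees of $B$ in $[n-c-s+1,n-c-1]$ --- impose that the vector $(\alpha_c, \ldots, \alpha_{c+s-1})$ lies in a $1$-dimensional subspace parametrised by $\alpha^\prime_{c+t}$. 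The coupling matrix is a Vandermonde-type matrix of binomial coefficients, whose rank equals $s$ precisely by the maximality of $s$ in the construction of $\tri^\prime$ together with the balancing condition.

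The main obstacle is verifying the well-definedness of $\varphi$, i.e.\ that a single value of $\alpha^\prime_{c+t}$ simultaneously satisfies every compatibility condition arising above. I would establish this via the transition equations (\ref{NumLigAtilB})--(\ref{NumLigCtilA}) together with Lemma \ref{NumLemAAA} (the identity $A^3 = (-1)^n I$), which supply the rigidity ensuring that a value of $\alpha^\prime_{c+t}$ fixed by one compatibility is automatically consistent with the rest. Hypothesis (i) enters because the Betti numbers of a realised triplet for $\tri^\prime$ give an explicit generator of $\ker M_{\tri^\prime}$ against which $\varphi$ can be calibrated. Injectivity of $\varphi$ is then straightforward: if $\varphi((\alpha_i)) = 0$, then $\alpha_i = \alpha^\prime_i = 0$ on $A \cap A^\prime$, and the $s-1$ new type-(\ref{NumLigAto}) equations, being of maximal rank $s-1$ in $\alpha_c, \ldots, \alpha_{c+s-1}$, force these remaining entries to vanish as well. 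This establishes $\dim K \le 1$ and completes the argument.
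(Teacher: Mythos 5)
Your strategy differs from the paper's, and it has a genuine gap at exactly the point you flag as the ``main obstacle'': the well-definedness of $\varphi$. Given $\alpha \in K$, setting $\alpha'_i = \alpha_i$ on $A \cap A'$ and asking one scalar $\alpha'_{c+t}$ to satisfy all $\tri'$-equations imposes $r'$ conditions on a single unknown: in every common row the term at $c+t$ must exactly compensate the dropped contribution of the columns $c, \ldots, c+s-1$. Nothing in the transition equations (\ref{NumLigAtilB})--(\ref{NumLigCtilA}) or Lemma \ref{NumLemAAA} forces these $r'$ conditions to be proportional, and in fact they are not: by (ii) the kernel of the $\tri'$-system is one-dimensional, spanned (using (i)) by the Betti vector of an actual complex, and Herzog--K\"uhl-type behaviour shows that changing the degree set changes \emph{all} the Betti numbers, so the restriction of a $\tri$-solution to the common degrees is in general not proportional to the corresponding entries of the $\tri'$-solution; hence no value of $\alpha'_{c+t}$ makes your image lie in $\ker M_{\tri'}$, and ``calibrating'' against the realized complex cannot repair this. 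The injectivity step is also incorrect as stated: the $s-1$ new equations of type (\ref{NumLigAto}) involve only $\alpha_c, \ldots, \alpha_{c+s-1}$ (the variable $\alpha'_{c+t}$ does not even occur in them), they form a triangular system of rank $s-1$ in $s$ unknowns, so by themselves they leave a one-parameter family (parametrised by $\alpha_c$) and do not force vanishing.

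The paper avoids solution-space maps altogether and compares coefficient matrices. Order the rows of the matrix $X$ of the $\tri$-system with the equations (\ref{NumLigAto}) first and delete the column indexed by $c$; because $\overline{B}$ misses $[c+1,c+s-1]$ while $[c,c+s-1] \subseteq A$, the $s-1$ extra rows touch only the columns $c,\ldots,c+s-1$, giving a block form
\[ \left[ \begin{matrix} T & 0 \\ Z & Y \end{matrix} \right] \]
with $T$ triangular of size $(s-1)\times(s-1)$ and with $Y$ equal to the matrix $X'$ of the $\tri'$-system with its column $c+t$ removed. Hypotheses (i) and (ii) are used only to conclude that every maximal minor of $X'$ is nonzero (the minors are proportional to the Betti numbers of the realized $\tri'$-complex, which are all positive), so $\det Y \neq 0$, hence $X$ has full rank $r$ and its kernel is one-dimensional. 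If you want to salvage your write-up, this determinantal factorisation is the missing idea; the injective-map approach does not go through.
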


\begin{proof}
Let $X$ be the coefficient matrix for the system of equations 
given in Proposition \ref{NumProAsystem}
for the Betti numbers 
$\alpha_i$, of a pure complex associated to $A$ in the degree triplet $\tri$. 
Let $X^\prime$ be the corresponding
coefficient matrix for the triplet $\tri^\prime$. By hypothesis
the solution set of $X^\prime$ is one-dimensional. The coordinates of 
a solution vector $(\alpha^\prime_0, \ldots, \alpha^\prime_{r^\prime})$
may be taken as the minors of the matrix $X^\prime$. By hypothesis
there exists a pure free squarefree complex $F^{\prime \dt}$, part of a
triplet,  whose Betti numbers are a multiple of
this solution vector, and hence all the $\alpha^\prime_i$ will be nonzero.

   Now note that the columns in $X^\prime$ has columns parametrizes by
by the degrees of $A^\prime$ in $[c+s, n-b]$. These are exactly the degrees
of $A$ which are in $[c+s, n-b]$, together with the degree $c+t$. 
Write the coefficient matrix $X$ such that equations (\ref{NumLigAto})
are the first rows and the equations (\ref{NumLigAen}) the second
group of rows, and the (\ref{NumLigAtre}).
If we remove the first column in $X$, indexed by $c$, then $X$ will
have a form
\[ \left [ \begin{matrix} T & 0 \\
                          Z & Y
           \end{matrix} \right ]
\]
where $T$ is a triangular matrix of size $(s-1) \times (s-1)$. 
This is due to the hypotheses we have on the forms of $A$ and $\overline{B}$
in the interval $[c,c+s-1]$.
If on the other hand we remove the column of $X^\prime$ indexed by $c+t$, 
we will simply get the matrix $Y$.
Hence the determinant of $Y$, which is one of the $\alpha^\prime_i$,
is nonzero. So the matrix $X$ will have full rank, and hence
a one-dimensional solution set. 
\end{proof}
  
We then get the following.

\begin{theorem} \label{NumTeoremAB}
Part a. of Conjecture \ref{PurecxConj} implies part b. of the conjecture:
The Betti numbers of triplets of pure complexes of free squarefree
modules, associated to a balanced degree triplet, are uniquely
determined up to common scalar multiple.
\end{theorem}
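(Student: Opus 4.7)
The plan is to argue by induction on the total number $e = e_A + e_B + e_C$ of internal nondegrees of the balanced degree triplet $\tri = (A,B,C)$. By the corollary following Proposition \ref{NumProAsystem}, it is enough to show that the equation system (\ref{NumLigAen})--(\ref{NumLigAtre}) for the Betti numbers $\alpha_i$ of the first complex $F^\dt$ has a one-dimensional solution set. Once this is established, part a.\ of the conjecture provides a triplet of pure complexes realizing $\tri$, whose Betti numbers form a nonzero solution and therefore span the whole (one-dimensional) solution space; any other such triplet must have Betti numbers proportional to these.

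For the base case $e = 0$, Lemma \ref{PurecxLemabc}(c) tells us that $\tri$ comes from one of the Koszul-type complexes described in Section \ref{SetSubsecKoszul}, so the existence part is automatic and we need only argue the uniqueness directly. With $e_A = e_B = e_C = 0$ the equations (\ref{NumLigAen}) and (\ref{NumLigAto}) are vacuous, so only the $a$ equations (\ref{NumLigAtre}) remain, on the $a+1$ unknowns $\alpha_0, \ldots, \alpha_a$. After deleting the column indexed by $a_0 = c$, the coefficient matrix becomes the $a \times a$ matrix $(\binom{a_i}{j})_{i,j=1}^{a}$ with $a_1 < a_2 < \cdots < a_a$ distinct integers; since $\binom{x}{j}$ is a polynomial of degree $j$ in $x$, this matrix has the same rank as a Vandermonde matrix and is invertible. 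Hence the solution space is one-dimensional.

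For the inductive step, assume $e \geq 1$ and, by symmetry, that the side $A$ contains an internal nondegree. Apply the construction immediately preceding the theorem to produce $\tri^\prime = (A^\prime, B^\prime, C)$ with strictly fewer internal nondegrees (namely $e - s$ with $s \geq 1$). By the lemma just before the theorem $\tri^\prime$ is again balanced, so by part a.\ of Conjecture \ref{PurecxConj} a triplet of pure complexes exists for $\tri^\prime$. By the induction hypothesis, the equation system for $\tri^\prime$ has a one-dimensional solution set. The proposition preceding the theorem then transfers this conclusion to $\tri$: its hypotheses (i) existence for $\tri^\prime$ and (ii) one-dimensional solution set for $\tri^\prime$ are both met, so the equation system for $\tri$ has a one-dimensional solution set as well, completing the induction.

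The main obstacle is purely bookkeeping at the inductive step, namely making sure that whichever side of $\tri$ we pick to hold a nondegree, the reduction to $\tri^\prime$ together with the proposition actually applies. The key work — showing that the reduction is well-defined, that $\tri^\prime$ remains balanced, and that one-dimensionality of the solution set propagates from $\tri^\prime$ back to $\tri$ — has already been done in the lemma and proposition above, so the induction goes through cleanly. The only subtlety is the base case, where one must verify non-degeneracy of the binomial coefficient matrix, and this follows from the Vandermonde-type observation sketched above.
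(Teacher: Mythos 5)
Your proposal is correct and follows essentially the same route as the paper: induction on the number of internal nondegrees using the lemma and proposition preceding the theorem, with uniqueness of all three sets of Betti numbers then propagated via the transition equations/corollary. The only cosmetic difference is the base case, where the paper invokes the Herzog--K\"uhl equations (covering the case of all nondegrees on one edge, via Lemma \ref{SetLemCM}) while you verify the same nondegeneracy directly by the Vandermonde-type argument for the binomial coefficient matrix; this is the same content.
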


\begin{proof}
This follows from the previous proposition once we know it is true
for the induction start. And the induction start is a degree triplet
with no nondegrees. But in any degree triplet where all nondegrees
are on only one edge, the uniqueness of Betti numbers follows by the 
Herzog-K\"uhl equations, see \cite[Sec. 1.3]{FlIntro}, 
since if this edge corresponds to a complex
$F^\dt$, then this complex is a resolution of a Cohen-Macaulay module,
Lemma \ref{SetLemCM}.
The uniqueness
of all Betti numbers up to common scalar multiple follows by the 
transition equations  (\ref{NumLigAtilB}), (\ref{NumLigBtilC}), and 
(\ref{NumLigCtilA}).
\end{proof}

\section{Construction of triplets when the internal nondegrees
are on only one side of the degree
triangle}

\label{KonSec}
In this section we construct triplets of pure squarefree 
complexes in the case that two of the complexes are linear.
These correspond to degree triangles where two of the sides
only consists of degrees (filled circles). 

\subsection{Auxiliary results on subspaces of vector spaces}

Let $E$ be a vector space and $E_1, \ldots, E_r$ subspaces of $E$.
For $I$ a subset of $[r] = \{1, \ldots, r \}$ we let $E_I$ be the intersection
$\cap_{i \in I} E_i$. 

\begin{lemma} \label{ConLemGen}
Suppose $E_1, \ldots, E_r$ are general subspaces of $E$ of codimension one,
where $r \leq \dim_\kk E$. Then the $E_{[r]\backslash \{i \}}$ as $i$ varies through
$i = 1, \ldots, r$, generate $E$.
\end{lemma}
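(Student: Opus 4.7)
The plan is to prove this by duality. Pass to the dual space $E^*$ and, for each $i$, choose a linear functional $\ell_i \in E^*$ whose vanishing locus is the hyperplane $E_i$. The key observation is that for general hyperplanes $E_1, \ldots, E_r$ with $r \leq \dim_\kk E$, the corresponding linear functionals $\ell_1, \ldots, \ell_r$ are linearly independent in $E^*$ (which has dimension $\dim_\kk E \geq r$).

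Next, I would translate the statement into its dual form. The annihilator of the intersection $E_{[r]\setminus\{i\}} = \bigcap_{j \neq i} E_j$ in $E^*$ is precisely the span $\langle \ell_j : j \neq i\rangle$. Since annihilators convert sums of subspaces into intersections of subspaces, we have
\[
\Bigl(\sum_{i=1}^{r} E_{[r]\setminus\{i\}}\Bigr)^\perp \;=\; \bigcap_{i=1}^{r} \langle \ell_j : j \neq i\rangle.
\]
Thus it suffices to show that the right-hand intersection is $0$, since then the sum on the left equals $E$.

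To prove the intersection is zero, I would take any $v$ in it and write $v = \sum_{j=1}^r c_j \ell_j$ using the linear independence of the $\ell_j$'s. Membership in $\langle \ell_j : j \neq i\rangle$ forces the coefficient $c_i$ to vanish, by uniqueness of representation. Running this argument for every $i = 1, \ldots, r$ forces all $c_i$ to vanish, so $v = 0$.

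The only real content is the genericity step at the outset, namely that the defining forms $\ell_i$ can be chosen linearly independent when $r \leq \dim_\kk E$; this is automatic since ``general hyperplane'' is open in the dual projective space, and picking $r$ linearly independent forms is an open non-empty condition. All remaining steps are formal consequences of duality and linear algebra, so I do not expect a substantial obstacle.
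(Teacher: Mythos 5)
Your argument is correct, but it runs through the dual space rather than the paper's primal construction. You fix defining forms $\ell_i \in E^*$ with $E_i = \ker \ell_i$, observe that for a general choice with $r \leq \dim_\kk E$ the $\ell_i$ are linearly independent, and then use the standard annihilator identities $\bigl(\bigcap_{j \neq i} E_j\bigr)^\perp = \langle \ell_j : j \neq i\rangle$ and $\bigl(\sum_i E_{[r]\setminus\{i\}}\bigr)^\perp = \bigcap_i \langle \ell_j : j \neq i\rangle$, which vanishes by uniqueness of coordinates in the independent set $\ell_1,\ldots,\ell_r$; all steps check out. The paper instead first divides out by the common intersection $E_{[r]}$ (harmless, since $E_{[r]}$ lies in every $E_{[r]\setminus\{i\}}$) to reduce to $r = \dim_\kk E$, and then exhibits the generic configuration explicitly: general vectors $v_1,\ldots,v_r$ with $E_i = \langle v_j : j \neq i\rangle$, so that each $E_{[r]\setminus\{i\}}$ is the line $\langle v_i\rangle$ and these visibly span. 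The two arguments are essentially dual to one another: your version has the advantage of isolating exactly what ``general'' must mean (linear independence of the defining forms, an open dense condition precisely because $r \leq \dim_\kk E$), while the paper's version is more constructive, producing a concrete generic tuple and making the intersections one-dimensional coordinate lines after the quotient step. Either proof suffices for the application in Proposition \ref{KonProDeg}.
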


\begin{proof}
By dividing out by $E_{[r]}$ we may as well assume that $r = \dim_\kk E$.
Then each $E_{[r]\backslash \{i \}}$ corresponds to a one-dimensional vector
space.
To construct the $E_i$ we may chose general vectors $v_1, \ldots, v_r$
and let $E_i$ be spanned by the $(r-1)$-subsets of this $r$-set we get
by successively omitting the $v_i$. 
\end{proof}

\begin{lemma} \label{ConLemCode}
Let $E_i$ be a subspace of $E$ of codimension $e_i$ for $i = 1, \ldots, r$.
Suppose for each {\it proper} subset $J$ of $I$ that 
the codimension of $E_J$ is $\sum_{i \in J} e_i$. 
If $\codim E_I < \sum_{i \in I} e_i$, then the $E_{I \backslash \{i \}}$ do not
generate $E$ as $i$ varies through $I$. 
\end{lemma}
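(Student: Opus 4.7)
The approach is to reduce to the quotient $W = E/E_I$, where the ambient dimension is exactly $d := \codim_E E_I$, and then show that the dimensions of the images $\bar E_{I \setminus \{i\}}$ of the $E_{I \setminus \{i\}}$ in $W$ are too small in total to span $W$.

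First, I note that the $E_{I\setminus \{i\}}$ generate $E$ if and only if their images $\bar E_{I \setminus \{i\}}$ generate $W$, since all of them contain $E_I$. So it suffices to prove that $\sum_{i \in I} \dim_\kk \bar E_{I \setminus \{i\}} < d$.

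Next, I compute the individual dimensions. Since $E_I \subseteq E_{I \setminus \{i\}}$, we have
\[
\dim \bar E_{I \setminus \{i\}} = \dim E_{I \setminus \{i\}} - \dim E_I = \codim_E E_I - \codim_E E_{I \setminus \{i\}}.
\]
By the hypothesis applied to the proper subset $I\setminus \{i\}$, $\codim_E E_{I\setminus\{i\}} = \sum_{j \in I,\, j \neq i} e_j$, so
\[
\dim \bar E_{I\setminus \{i\}} = d - \sum_{j \in I,\, j \neq i} e_j.
\]

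Summing over $i \in I$ and letting $s = \sum_{j \in I} e_j$, each $e_j$ appears $|I|-1$ times, giving
\[
\sum_{i \in I} \dim \bar E_{I\setminus \{i\}} = |I|\,d - (|I|-1)\,s.
\]
The hypothesis $\codim E_I < \sum_{j\in I} e_j$ reads $d < s$. Since necessarily $|I| \geq 2$ (for $|I|=1$ the strict inequality is vacuous/false), multiplying $d<s$ by $|I|-1 \geq 1$ gives $(|I|-1)d < (|I|-1)s$, hence
\[
|I|\,d - (|I|-1)\,s < |I|\,d - (|I|-1)\,d = d.
\]
Therefore the total dimension of the images is strictly less than $\dim W = d$, so they cannot generate $W$, and thus the $E_{I\setminus \{i\}}$ do not generate $E$.

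The argument is essentially a dimension count once one passes to $W$; the only subtle point is using the hypothesis on proper subsets in the precise form that gives the codimensions of the $E_{I\setminus \{i\}}$, and I expect no real obstacle beyond keeping the indices straight.
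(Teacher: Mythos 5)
Your proof is correct and follows essentially the same route as the paper: pass to the quotient $E/E_I$, use the hypothesis on proper subsets to compute the dimensions of the images of the $E_{I\setminus\{i\}}$, and conclude by a dimension count that they cannot span. The paper phrases the count with the deficiency $r = \sum_{i\in I} e_i - \codim E_I > 0$ (each image has dimension $e_i - r$), but this is the same computation as yours.
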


\begin{proof}
Let the codimension of $E_I$ be $(\sum_{i \in I} e_i) - r $ where $r > 0$.
By dividing out by $E_I$ we may assume $E_I = 0$ and so this number is
the dimension of $E$. Then the dimension of
$E_{I \backslash \{i \}}$ is $e_i - r$, and so if $|I| \geq 2$ these
cannot generate the whole space $E$.
\end{proof}

\noindent{\bf Notation.} We shall in the following denote by $S^r(E)$ the
$r$'th symmetric power of $E$ and by $D^r(E)$ the $r$'th divided power
of $E$. Also let $\tilde{D}^r(E) = \wedge^{\dim_\kk E} E \te_\kk D^r(E)$. 

\subsection{Construction of tensor complexes}
\label{ConSubsecTensor}
We start with a degree triplet $(A,B,C)$ where $B = [a, \overline{c}]$ and 
$C = [b, \overline{a}]$ are intervals, 
i.e. contain no nondegrees. We partition the complement of $A$ in
$[0,n]$ into successive intervals
\[ [u_0 + 1, u_0 + w_0 - 1], [u_1 +1, u_1 + w_1 - 1], 
\ldots, [u_r+1, u_r + w_r - 1], [u_{r+1}+1, u_{r+1} + w_{r+1} - 1], \]
where for the first and last interval we have $u_0 = -1, w_0 = c+1$
and $u_{r+1} = n-b$ and $w_{r+1} = b+1$, and for the middle intervals
$c \leq u_1$, $u_i + w_i \leq u_{i+1}$,  and $u_r + w_r \leq n-b$.
Let $W_i$ be a vector space of dimension $w_i$, and $W = \te_{i = 0}^{r+1} W_i$.
Denote by $\ovpil{W}$ the tuple $(W_0, \ldots, W_{r+1})$. 
Let $V$ be a vector space of dimension $n$ 
and $S(V \te W^*)$ the symmetric algebra. 
In the language of \cite[Sec. 5]{BEKS2}, $(0; u_0, \ldots, u_{r+1})$ is
a {\it pinching weight} for $V,\ovpil{W}$. 

  Berkesch et.al. \cite{BEKS2}, construct a resolution $F^\dt(V;\ovpil{W})$
of pure free 
$S(V \te W^*)$-modules
with degree sequence $A$ such that the term with free generators of
degree $d \in A$ has the form:
\begin{equation} \label{ConLigTensor}
 \bigwedge^d V \bigotimes (\te_{d \leq u_i} S^{u_i-d}(W_i)) 
\bigotimes (\te_{d \geq u_i + w_i}
\tilde{D}^{d-u_i - w_i}(W_i)) \bigotimes S(V \te W^*).
\end{equation}
This complex is a resolution of a Cohen-Macaulay module and is equivariant for
the group
\[GL(V) \times GL(W_0) \times \cdots \times
GL(W_{r+1}). \]

The construction of this complex follows the method of Lascoux, presented
in \cite[Sec. 5.1]{We}. 
Let $\pwov$ be the product $\PP(W_0) \times \cdots
\times \PP(W_{r+1})$. There is a tautological sequence:
\[ 0 \pil \gS \pil W \te \gO_{\pwov} \pil
\gO_{\pwov}(1, \ldots, 1) \pil 0. \]
Dualizing this sequence and tensoring with $V$
we get a sequence (let $\gQ = \gS^*$)
\[  0 \pil V \te \gO_{\pwov}(-1, \ldots, -1)  \pil 
V \te W^* \te \gO_{\pwov} \pil
V \te \gQ \pil 0. \]
Constructing the affine bundles over $\pwov$ of the last two terms in this 
complex, we get a diagram
\[
\xymatrix{
Z = \VV(V \te \gQ) \ar@{^{(}->}[r] \ar[d]
 & \VV((V \te W^*) \ar[d]^\pi
\te \gO_{\pwov} ) \ar@{=}[r]  & \VV(V \te W^*) \times \pwov \\
Y \ar@{^{(}->}[r]  & \VV(V \te W^*)  &   }
\]
where $Y$ is the image of $Z$ by the projection $\pi$. 
The projection of the structure sheaf $\pi_*(\gO_Z)$ is the sheaf on 
the affine space $Y$ associated to the $S(V \te W^*)$-module
$H^0(\pwov, \, \Sym(V \te \gQ))$.

Let $p$ be the projection of $\VV(V \te W^*) \times \pwov)$
to the second factor.
Let $\gL$ be the line bundle
$\gO_{\pwov}(u_0, \ldots, u_{r+1})$ on $\pwov$.  
Then $M = H^0(\pwov, \Sym(V \te \gQ) \te p^* \gL)$ is an 
$S(V \te W^*)$-module and the complex $F(V; \ovpil{W})$ is a resolution
of this module, by \cite[Prop. 5.1.2.b]{We}. 
The sheafification of this module on the affine space
is in fact $\pi_*(\gO_Z \te p^* \gL)$. 

\medskip
\noindent{\bf Fact.} $\dim Y = \dim Z$. This is argued for in \cite{BEKS2},
see for instance the proof of Proposition 3.3. First note that
\begin{equation*} \dim Z = \dim \pwov + n \cdot \rk \gQ.
\end{equation*}
Since $F(V;\ovpil{W})$ is a resolution of a module supported on 
$Y$, the length of this resolution is at least the codimension of $Y$.
Hence  
\begin{align*}
 \dim Y & \geq  n \dim_\kk W - |A| + 1 \\
   & = n \dim_\kk W - n + \sum_i (w_i - 1).
\end{align*}
Since $\rk \gQ = \dim_\kk W - 1$ and $\dim \pwov = \sum_i (w_i - 1)$
we get $\dim Y \geq \dim Z$ and we obviously also have the opposite inequality.

\subsection{Degeneracy loci of bundles}
Let $\gE$ be a vector bundle, i.e. a locally free sheaf of finite rank $e$,
on a scheme $S$. Let $T$ be a subspace of the sections $\Gamma(S, \gE)$.
The map $T \te_\kk \gO_S \pil \gE$ defines a map and an exact sequence
\begin{equation} \label{ConLigRkvot}
 T \te_\kk \Sym(\gE) \pil \Sym(\gE) \pil \gR \pil 0.
\end{equation}
where the cokernel $\gR$ is a quasi-coherent sheaf of $\gO_S$-algebras.
The space $T$ gives global sections of the affine bundle 
$\VV = \VV_S(\gE)$ and they generate
a sheaf of ideals of $\gO_\VV$ defining a subscheme $\gX = \Spec_{\gO_S} \gR$.

Now we may stratify $S$ according to the rank of the map 
$T \te_\kk \gO_S \pil \gE$. Let $U_i$ be the open subset where the rank is
$\geq \dim_\kk T-i$. Then if $x \in U_i\backslash U_{i-1}$ we get an
exact sequence 
\[ T \te_\kk \Sym(\gE_{\kk(x)}) \pil \Sym(\gE_{\kk(x)}) \pil \gR_{\kk(x)} \pil 0 \]
where $\gR_{\kk(x)}$ is the quotient symmetric algebra generated by a 
vector space of dimension $e-t + i$. Hence the fiber
$\gX_{\kk(x)}$ has dimension $e-t+i$. We observe that the dimension of $\gX$ is 
less than or equal to the maximum of 

\begin{equation} \label{KonLigMaxdim}
\max \{ \dim (S \backslash U_{i-1}) + e - t + i \}.
\end{equation}

\medskip
We adapt this to the situation of Subsection \ref{ConSubsecTensor}
so $S = \pwov$. 
Let $V$ be a vector space with a basis $x_1, \ldots, x_n$ and 
$\gE = V \te_\kk \gQ$. For each $x_i$ chose a general subspace $E_i \sus W^*$
of codimension one. Let $T$ be the subspace
\[ \bigoplus_i x_i \te E_i \sus \bigoplus_i x_i \te W^*
=  V \te_\kk W^*. \]
Note that the dimension of $T$ equals the rank of $V \te_\kk \gQ$. 

\begin{proposition} \label{KonProDeg}
The locus where the composition
\[ \alpha: T \te_\kk \gO_{\pwov} \inpil V \te_\kk W^* \te_\kk \gO_{\pwov} \pil  
V \te_\kk \gQ
\] 
degenerates to rank
$\dim_\kk T - i$, has codimension $\geq i$. 
\end{proposition}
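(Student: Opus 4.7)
The plan is to exploit a direct-sum decomposition of $\alpha$ induced by the basis $x_1, \ldots, x_n$ of $V$. Since
\[ T = \bigoplus_{i=1}^n x_i \te E_i \sus V \te W^* \quad \text{and} \quad V \te \gQ = \bigoplus_{i=1}^n x_i \te \gQ, \]
both the inclusion $T \inpil V \te W^*$ and the quotient to $V \te \gQ$ respect this decomposition, so $\alpha = \bigoplus_{i=1}^n \alpha_i$, where each
\[ \alpha_i \colon E_i \te \gO_{\pwov} \inpil W^* \te \gO_{\pwov} \twoheadrightarrow \gQ \]
is a map of locally free sheaves of the common rank $\dim_\kk W - 1$.

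Next I would identify the degeneracy locus of each $\alpha_i$ pointwise. The line subbundle $\gO_{\pwov}(-1,\ldots,-1) \inpil W^* \te \gO_{\pwov}$ obtained by dualizing the tautological sequence defines a closed embedding $\phi \colon \pwov \inpil \PP(W^*)$ onto a nondegenerate Segre variety $\Sigma$; write $L_p \sus W^*$ for the fiber of this line subbundle at $p \in \pwov$. Since $E_i \sus W^*$ has codimension one, the restriction $\alpha_i|_p \colon E_i \pil W^*/L_p$ has corank $0$ when $L_p \not\sus E_i$ and corank exactly $1$ when $L_p \sus E_i$. Hence the degeneracy locus of $\alpha_i$ is
\[ D_i = \phi^{-1}(\Sigma \cap \PP(E_i)), \]
a hyperplane section of the Segre variety.

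Finally, since $\alpha = \bigoplus_i \alpha_i$ with each $\alpha_i$ of corank at most $1$, the corank of $\alpha$ at $p$ equals the cardinality of $\{\, j : L_p \sus E_j \,\}$. Consequently the corank-$\geq i$ locus equals
\[ \bigcup_{|J| = i}\,\, \bigcap_{j \in J} D_j. \]
For general choices of $E_1, \ldots, E_n$, a Bertini-type argument shows that any $i$ of the hyperplane sections $\Sigma \cap \PP(E_j)$ meet in codimension exactly $i$ inside $\Sigma$ (or give the empty set when $i$ exceeds $\dim \Sigma = \dim \pwov$). Thus each $\bigcap_{j \in J} D_j$ has codimension at least $|J| = i$ in $\pwov$, and so does their union. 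The main obstacle is justifying the genericity claim: the hyperplanes $\PP(E_i)$ must be in suitably general position with respect to $\Sigma$ and to one another, so that the successive intersections with $\Sigma$ remain proper. This is exactly the role played by the genericity hypothesis on the $E_i$ and follows from iterated application of Bertini's theorem, using the nondegeneracy of $\Sigma$ in $\PP(W^*)$.
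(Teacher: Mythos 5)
Your proof is correct, but it follows a genuinely different route from the paper's. Both arguments start by splitting $\alpha = \bigoplus_i \alpha_i$ with $\alpha_i \colon E_i \te_\kk \gO_{\pwov} \pil \gQ$; after that you use that $\dim_\kk E_i = \rk \gQ$ and that fibrewise $\gQ_{\kk(p)} = W^*/L_p$, so each $\alpha_i$ has corank at most one and degenerates exactly on $\phi^{-1}(\PP(E_i))$, where $\phi(p) = [L_p]$ is the closed Segre-type embedding coming from $\gO_{\pwov}(-1,\ldots,-1) \inpil W^* \te_\kk \gO_{\pwov}$. Hence the corank of $\alpha$ at $p$ is the number of hyperplanes $\PP(E_j)$ through $\phi(p)$, and the codimension bound reduces to general position of finitely many hyperplanes with respect to a fixed projective variety. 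The paper instead keeps arbitrary coranks $q_i \geq 1$ in play: it uses Lemmas \ref{ConLemGen} and \ref{ConLemCode} to show that simultaneous degeneracy of the $\alpha_i$, $i \in K$, forces the single map $E_K \te_\kk \gO_{\pwov} \pil \gQ$, with $E_K = \cap_{i \in K} E_i$, to drop rank by the total amount, and then applies the standard codimension estimate for degeneracy loci of maps into the globally generated bundle $\gQ$. Your version is more elementary and makes the corank stratification completely explicit (intersections of hyperplane sections of the Segre variety), while the paper's machinery does not rest on the corank-one observation and is the shape of argument one would need if the $E_i$ had higher codimension. Two remarks on your genericity step: Bertini in the usual sense is not required---all you need is the elementary fact that a general hyperplane contains no irreducible component of a fixed closed subvariety, applied iteratively, together with semicontinuity of intersection dimension so that a single dense open set of tuples $(E_1,\ldots,E_n)$ handles all $\binom{n}{i}$ subsets $J$ simultaneously---and the nondegeneracy of the Segre image, though true, plays no role. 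Note also that only codimension at least $i$, not exactly $i$, is needed, which is precisely what general position gives.
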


\begin{proof} The map $\alpha$ is the direct sum of maps
\[ \alpha_i:  E_i \te_\kk \gO_{\pwov}  \pil \gQ. \]
The rank of $\alpha$ is then the sum of the ranks of
these maps.
Now fix a subset $K$ of $[n]$, and let $E_K = \cap_{i \in K} E_i$.
For each $i \in K$ also fix a number $q_i \geq 1$. 
Let $X$ be the locus of points in $\pwov$ where the image of $\alpha_{i,\kk(x)}$
has corank $\geq q_i $ for $i \in K$.
Let $X^\prime$ be the locus of points in $\pwov$ where
 $E_K \te_\kk \gO_{\pwov} \mto{\alpha_K} \gQ$ degenerates to corank
$\geq \sum_{i \in K} q_i$. We will show that i) 
either $X$ is empty, or $X \sus X^\prime$, and 
ii) $\codim X^\prime \geq \sum_{i \in K} q_i$.
This will show the proposition.

\medskip 
i) Suppose $X$ is nonempty, 
and let $x \in \pwov$ be a point where the image of $\alpha_{i, \kk(x)}$ has
corank $ \geq q_i$. 
Clearly for any $I \sus [n]$, the image of $\alpha_{I, \kk(x)}$ is 
contained in $\cap_{i \in I} \im \alpha_{i,\kk(x)}$. 
Suppose there is  $I \sus K$ such that this intersection 
in $\gQ_{\kk(x)}$
does not have corank $\geq \sum_{i \in I} q_i$ and let $I$ be minimal such 
in $K$.
Then clearly $|I| \leq \rk \gQ + 1$ since all $q_i \geq 1$.  
The image of $E_{I \backslash \{j \}}$
is contained in the intersection 
$\cap_{i \in I \backslash \{j \}}  \im \alpha_{i,\kk(x)}$ and these
do not generate $\gQ_{\kk(x)}$ by Lemma \ref{ConLemCode}. 
By Lemma \ref{ConLemGen} this is 
not possible since the $E_{I \backslash \{j \}}$ generate $W^*$,
and the map $W^* \te \gO_{\pwov} \pil \gQ$ is surjective.
Hence the image of $E_K \mto{\alpha_{K, \kk(x)}} \gQ_{\kk(x)}$ must be of corank
$\geq \sum_{i \in K} q_i$. This proves the first part i).

\medskip
ii)  The image
of $E_K \te_\kk \gO_{\pwov} \mto{\alpha_K} \gQ$ is a sheaf of corank
$\geq \rk \gQ - \dim_\kk E_K = |K| -1$ at all points $x$ in $\pwov$. 
Since $\gQ$ is generated
by its global sections, the locus of points
where this map has corank $\geq c + |K| -1$, for some $c \geq 1$,
has codimension 
in $\pwov$ greater than or equal to
\begin{align}
c(c+\rk \gQ - \dim_\kk E_K) & =  c(c+ |K| -1)  \notag\\
\label{ConLigCodim} & \geq c + |K| -1.
\end{align}
Hence the locus of points where the corank is $\geq \sum_{i \in K} q_i 
= c + |K| -1$ has codimension $\geq \sum_{i \in K} q_i$. 
\end{proof}

\begin{corollary} \label{KonCorDim}
Let $T$ be the sections of $\gE = V \te_\kk \gQ$ given by the composition
$\alpha$. 
The subscheme $\gX = \Spec_{\gO_{\pwov}} \gR$ of  
$\VV_{\pwov} (\gE)$ defined by the vanishing of $T$, see (\ref{ConLigRkvot}),
has dimension less than or equal to the dimension of $\pwov$. 
\end{corollary}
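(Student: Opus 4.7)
The plan is to apply the general dimension bound (\ref{KonLigMaxdim}) to this particular setting, using Proposition \ref{KonProDeg} to control the dimensions of the degeneracy strata.

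First I would identify the numerical parameters. Here $S = \pwov$ and $\gE = V \te_\kk \gQ$, so the rank $e$ of $\gE$ equals $n \cdot \rk \gQ = n(\dim_\kk W - 1)$. On the other hand, $T = \bigoplus_i x_i \te E_i$ where each $E_i \sus W^*$ has codimension one, so
\[ \dim_\kk T = n(\dim_\kk W - 1) = e. \]
Thus $e - t = 0$, and the general bound (\ref{KonLigMaxdim}) simplifies to
\[ \dim \gX \,\leq\, \max_{i \geq 0} \bigl\{\, \dim(\pwov \backslash U_{i-1}) + i \,\bigr\}, \]
where $U_i$ is the open locus on which $\alpha$ has rank $\geq t - i$.

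Next I would use Proposition \ref{KonProDeg} to bound each stratum. The complement $\pwov \backslash U_{i-1}$ is exactly the closed locus on which $\alpha$ has rank $\leq t - i$, i.e. degenerates to corank at least $i$. By Proposition \ref{KonProDeg} applied with $i$ in place of $i$, this locus has codimension $\geq i$ in $\pwov$, so
\[ \dim(\pwov \backslash U_{i-1}) \,\leq\, \dim \pwov - i. \]
For $i = 0$ the bound $\dim(\pwov) + 0 = \dim \pwov$ holds tautologically. Substituting into the maximum gives
\[ \dim \gX \,\leq\, \max_{i \geq 0}\, \bigl( \dim \pwov - i + i \bigr) \,=\, \dim \pwov, \]
which is the desired conclusion.

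The only non-routine step is the invocation of Proposition \ref{KonProDeg}, and that is already proved. The small care needed is the identity $\dim_\kk T = \rk \gE$, which is the reason the bound comes out as sharp equality in the exponents and forces the $i$'s to cancel; without this equality the method would give only an inequality shifted by $e - t$. Everything else is formal bookkeeping with the stratification set up around (\ref{ConLigRkvot}) and (\ref{KonLigMaxdim}).
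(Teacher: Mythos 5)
Your proposal is correct and follows exactly the paper's argument: the paper's proof of this corollary is precisely the combination of Proposition \ref{KonProDeg} with the dimension bound (\ref{KonLigMaxdim}), using the already-noted fact that $\dim_\kk T$ equals the rank of $V \te_\kk \gQ$ so that the shift $e-t$ vanishes. You have simply written out the bookkeeping that the paper leaves implicit.
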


\begin{proof}
This follows by the above Proposition \ref{KonProDeg} and
the expression for the dimension given by (\ref{KonLigMaxdim}).
\end{proof}

\subsection{Construction of pure free squarefree resolutions 
from tensor complexes}
Recall that $x_1, \ldots, x_n$ is a basis for $V$.
Consider the map
\begin{equation} \label{ConLigKvot}
 V \te W^* = \bigoplus_{i=1}^n x_i \te W^* \pil 
\bigoplus_{i = 1}^n x_i \te (W^*/E_i) \iso V. 
\end{equation}
This identifies $V$ as the quotient space of $V \te W^*$ by the 
subspace $T$. 
It induces a homomorphism of algebras
\begin{equation*} S(V \te W^*) \pil S(V).
\end{equation*}
Recall the pure resolution $F^\dt(V;\ovpil{W})$ of Subsection 
\ref{ConSubsecTensor} whose degree sequence
is given by the set $A$.

\begin{proposition} \label{KonProEksistens}
The complex 
\[ F^\dt = F^\dt(V;\ovpil{W}) \te_{S(V\te W^*)} S(V) \]
is a pure free squarefree resolution of a Cohen-Macaulay squarefree
$S(V)$-module. Its degree
sequence is $A$.
\end{proposition}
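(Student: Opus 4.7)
The plan is to show that the base change $F^\dt = F^\dt(V;\ovpil W) \te_R S(V)$, where $R = S(V \te W^*)$, is still a free resolution and that its cokernel is a Cohen--Macaulay squarefree $S(V)$-module with pure resolution of degree sequence $A$.

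First I would describe the terms. By (\ref{ConLigTensor}), the $d$-th term of $F^\dt(V;\ovpil W)$ has the form $\bigwedge^d V \te N_d \te R$ for a vector space $N_d$ depending only on the $W_i$. Tensoring with $S(V)$ yields $\bigwedge^d V \te N_d \te S(V)$, which, viewed as a $\ZZ^n$-graded $S(V)$-module via the diagonal torus $(\kk^*)^n \sus GL(V)$, decomposes as $\bigoplus_{|I|=d} S(V)(-\ep_I)^{\dim_\kk N_d}$. Each $S(V)(-\ep_I)$ is a free squarefree module with generator in singly-graded degree $d$, so once exactness is established the complex is automatically pure with degree sequence $A$ and consists of free squarefree modules.

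The heart of the matter is exactness. Let $M$ denote the $R$-module resolved by $F^\dt(V;\ovpil W)$, which is Cohen--Macaulay by \cite{BEKS2}, and set $I = (T) \sus R$, so that $R/I \iso S(V)$ via (\ref{ConLigKvot}). Since $M$ is Cohen--Macaulay, it suffices to verify $\dim_R(M/IM) \leq \dim_R M - \dim_\kk T$; this forces $T$ to be an $M$-regular sequence, and then $F^\dt(V;\ovpil W) \te_R R/I$ is a free resolution of $M/IM$ over $S(V)$. The key identification is $Z \cap (\VV(V) \times \pwov) = \gX$, where $\gX$ is the subscheme of $Z = \VV_\pwov(V\te\gQ)$ from Corollary \ref{KonCorDim}: the linear forms $T \sus V\te W^*$ cut out $\VV(V) \sus \VV(V\te W^*)$, and on $Z$ they restrict to the sections $\alpha(T \te \gO_{\pwov})$ of $V\te\gQ$ whose vanishing defines $\gX$. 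Since $\pi$ maps $\gX$ surjectively onto $Y \cap \VV(V)$, Corollary \ref{KonCorDim} yields
\[ \dim(Y \cap \VV(V)) \;\leq\; \dim \gX \;\leq\; \dim \pwov. \]
Using $\dim M = \dim Y = \dim\pwov + n\rk\gQ$ and $\dim_\kk T = n\rk\gQ$, this is exactly the required inequality.

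Finally, the resolution of $M/IM$ so obtained has length $|A|-1 = n - \dim \pwov$, which equals the codimension of $M/IM$ in $\Spec S(V)$; hence $M/IM$ is Cohen--Macaulay. Since squarefree modules form an abelian subcategory closed under cokernels, $M/IM$ is squarefree. The main obstacle is precisely the dimension bound on $\gX$, and this is already handled by Corollary \ref{KonCorDim}, which in turn rests on the general-position hypothesis on the $E_i$; the remaining steps are formal consequences of Cohen--Macaulayness and of the construction of the tensor complex in \cite{BEKS2}.
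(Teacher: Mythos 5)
Your proposal is correct and follows essentially the same route as the paper's proof: identifying the base-changed locus $Z\cap(\VV(V)\times\pwov)$ with the degeneracy subscheme $\gX$ of Corollary \ref{KonCorDim}, deducing $\dim M/IM \le \dim M - \dim_\kk T$, so that $T$ is an $M$-regular sequence and the base-changed complex resolves a Cohen--Macaulay module, with purity and squarefreeness read off from the form (\ref{ConLigTensor}) and the diagonal torus. The only point you gloss over, which the paper spells out, is that the differentials remain $\ZZ^n$-graded after base change --- this needs the quotient map (\ref{ConLigKvot}) to be equivariant for the diagonal torus, which holds precisely because $T=\bigoplus_i x_i\te E_i$ is torus-stable.
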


\begin{remark} \label{KonRemTensor}
The essential thing about the tensor complex $F(V;\ovpil{W})$ 
that makes this construction
work is that in the generators of its free modules
in (\ref{ConLigTensor}), the only representations of $V$ that occur
are the exterior forms $\wedge^d V$. Choosing a basis $x_1, \ldots, x_n$
for $V$, this is generated by (squarefree) exterior monomials.
This is why the tensor complexes are ``tailor made'' for our construction.
\end{remark}

\begin{remark}
In our construction we could equally well have used the 
$GL(F) \times GL(G)$-equivariant complex of \cite[Sec.4]{EFW}.
Again in the generators of the free modules, the $\wedge^d F$ are
the only representations of $F$ that occur. In contrast
all kinds of irreducible representations of $G$ are involved, and it also only
works when char.$ \,\kk = 0$, which is why we focus on the 
tensor complexes of \cite{BEKS2}.
\end{remark}

From this we obtain as a corollary the following.

\begin{theorem} \label{KonTeoremHoved}
For a balanced degree triplet $(A,B,C)$ of type $n$, where $B$ and $C$
are intervals, i.e. the only internal nondegrees are in the 
interval associated to $A$, 
there exists a triplet of pure free squarefree modules over the
polynomial ring in $n$ variables whose degree sequences are given by 
$A,B$ and $C$. 
\end{theorem}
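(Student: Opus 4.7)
The approach is to realize the pure complex with degree sequence $A$ via Proposition \ref{KonProEksistens} and then let the Cohen--Macaulay property handle the other two sides of the degree triangle for free.

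First, I would set up the input for Proposition \ref{KonProEksistens}. Partition the complement $[0,n] \setminus A$ into its maximal subintervals: an initial block of length $c$ (namely $[0,c-1]$), a terminal block of length $b$ (namely $[n-b+1,n]$), and the blocks of internal nondegrees of $A$ in between. To each block assign a vector space $W_i$ whose dimension is one more than the length of the block; take $V$ of dimension $n$. Proposition \ref{KonProEksistens} then produces a pure free squarefree resolution $F^\dt$ of a Cohen--Macaulay squarefree $S(V)$-module with degree sequence exactly $A$, running from $c$ to $n-b$.

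Second, Lemma \ref{SetLemCM} applied to $F^\dt$ immediately gives that both $\AD(F^\dt)$ and $(\AD)^2(F^\dt)$ are linear, hence pure. Together with $F^\dt$ they form a triplet of pure free squarefree complexes, so it only remains to check that their degree sequences are $B = [a,n-c]$ and $C = [b,n-a]$. To pin down the remaining endpoints, I would apply Lemma \ref{PurecxLemStartslutt} iteratively. It gives top degree $n-c$ for $\AD(F^\dt)$; write $\alpha$ for its bottom degree. Iterating, the top degree of $(\AD)^2(F^\dt)$ is $n-\alpha$, with some bottom degree $\beta$. One more iteration combined with the isomorphism $(\AD)^3 \cong [n]$ identifies the top degree of $F^\dt[n]$ with $n-\beta$, forcing $\beta = b$. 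Proposition \ref{PurecxPropDeg} applied to the constructed triplet then gives $n = \alpha + b + c + e_A$, which together with the balancing identity $n = a+b+c+e_A$ forces $\alpha = a$. The degree sequences are therefore exactly $A$, $B$, and $C$.

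The real content, and thus the main obstacle, sits inside Proposition \ref{KonProEksistens}: one must verify that tensoring the tensor complex $F^\dt(V;\ovpil{W})$ down from $S(V \te W^*)$ to $S(V)$ along the generic quotient determined by the subspace $T$ preserves both exactness and the Cohen--Macaulay property. The dimension count in Corollary \ref{KonCorDim}, extracted from the degeneracy-locus analysis of $T \subset V \te W^*$ in Proposition \ref{KonProDeg}, is precisely what legitimizes this descent. Once Proposition \ref{KonProEksistens} is in hand, the present theorem is a short bookkeeping corollary.
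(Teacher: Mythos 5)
Your proposal is correct and follows essentially the same route as the paper: invoke Proposition \ref{KonProEksistens} to get a pure free squarefree resolution of a Cohen--Macaulay module with degree sequence $A$, then use Lemma \ref{SetLemCM} to conclude that $\AD(F^\dt)$ and $(\AD)^2(F^\dt)$ are linear, with the endpoint bookkeeping (via $n=a+b+c+e$) identifying their degree sequences as $B$ and $C$. Your extra iteration of Lemma \ref{PurecxLemStartslutt} just spells out the endpoint identification that the paper compresses into ``$B$ and $C$ are determined by $A$.''
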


\begin{proof}[Proof of Theorem \ref{KonTeoremHoved}]
Let the endpoints of $A$ be $c$ and $n-b$. All nondegrees of this triplet
are in $[c,n-b]$, so the number $e$ of such is the cardinality of 
$[c, n-b]\backslash A$. Since $a+b+c + e = n$ we see that $a$ is 
determined by $A$. Hence $B$ and $C$ are determined by $A$. 
Starting with the complex $F^\dt$ in Proposition \ref{KonProEksistens}, 
by Lemma \ref{SetLemCM}
$(\AD)(F^\dt)$ and $(\AD)^2(F^\dt)$ are both linear and so have 
degrees given by $B$ and $C$. 
\end{proof}

\begin{remark}
In the forthcoming paper \cite{FlZip} we consider the construction
of triplets of pure complexes in general. We transfer Conjecture
\ref{PurecxConj} to a conjecture on the existence of certain complexes
of coherent sheaves on projective spaces. In the case of the above theorem,
these complexes reduce to a single coherent sheaf, the line bundle
$\gO_{\pwov}(u_0, \ldots, u_{r+1})$ on the Segre embedding of $\pwov$ 
in the projective space ${\mathbb P}(W)$. 
\end{remark}

\begin{proof}[Proof of Proposition \ref{KonProEksistens}]
Let $Z^\prime$ be the pullback in the diagram
\[ 
\xymatrix{ Z^\prime \ar@{^{(}->}[r] \ar@{^{(}->}[d] &\VV(V) \times \pwov 
\ar@{^{(}->}[d]\\
 Z \ar@{^{(}->}[r] &\VV(V \te W^*) \times \pwov.
}
\]
The subscheme $\VV(V)$ of $\VV(V \te W^*)$ is defined by the vanishing
of the subspace $T$ of $V \te W^*$.
Since $Z = \VV_{\pwov}(V \te_\kk \gQ)$ we see that $Z^\prime$ is the 
subscheme of $Z$ defined by the vanishing of the sections $T$ of 
$V \te \gQ$ given by the composition $\alpha$ in Proposition \ref{KonProDeg}.
By Corollary \ref{KonCorDim} the dimension of $Z^\prime$ is less than or equal
to $\dim \pwov$. 
Since $\dim_k T$ equals the rank of $V \te \gQ$, the dimension of $Z$ is 
$\dim \pwov + \dim_k T$ 
and so $\dim Z^\prime \leq \dim Z - \dim_\kk T$. 

Let $Y^\prime$ be the pullback in the diagram
\[
\xymatrix{Y^\prime \ar@{^{(}->}[r] \ar@{^{(}->}[d]
& \VV(V) \ar@{^{(}->}[d] \\
 Y \ar@{^{(}->}[r] & \VV(V \te W^*).
} \]
Since the image of $Z$ is $Y$, the image of $Z^\prime$ is $Y^\prime$. 
This gives 
\[ \dim Y^\prime \leq \dim Z^\prime \leq \dim Z - \dim_k T = \dim Y - \dim_k T. \]

The complex $F^\dt(V;\ovpil{W})$ is a resolution of a Cohen-Macaulay module
$M$ supported on $Y$.
The module  $M^\prime = M \te_{S(V \te W^*)} S(V)$ where
$S(V) = S(V \te W^*)/(T)$ is supported on $Y^\prime$ and so
\[ \dim M^\prime \leq \dim Y^\prime \leq \dim Y - \dim_k T = \dim M - \dim_k T.\]
Since $M^\prime = M /(T \cdot M)$,
a basis for $T$ must form a regular sequence, and so $M^\prime$ is 
a Cohen-Macaulay module with resolution given by $F^\dt$. 
Therefore $F^\dt$ becomes a pure resolution of a Cohen-Macaulay
where the terms with generators of degree $d \in A$ are 
\begin{equation} \label{KonLigFd}
\bigwedge^d V \bigotimes (\te_{d \leq u_i} S^{u_i-d}(W_i)) 
\bigotimes (\te_{d \geq u_i + w_i}
\tilde{D}^{d- u_i - w_i}(W_i)) \bigotimes S(V).
\end{equation}

The basis $x_1, \ldots, x_n$ of $V$ induces a maximal torus $D$ of 
$GL(V)$, the diagonal matrices.
The quotient map (\ref{ConLigKvot}) is equivariant for
the torus action where $t = (t_1, \ldots, t_n) \in D$ acts on 
$w = \sum x_i \te w^*_i$ as $t.w = \sum (t_i.x_i) \te w^*_i$. 
Thus the complex above is equivariant for the torus action and so
is $\hele^n$-graded. The action on the term (\ref{KonLigFd}) in the complex
is given by the natural 
actions on $\wedge^d V$ and $S(V)$ and the trivial action on the rest of the
tensor factors. Hence the multidegrees of the generators of the terms
above are of squarefree degree, and
so the resolution is squarefree.
\end{proof}


\bibliographystyle{amsplain}
\bibliography{/Home/siv7/nmagf/Desktop/Bibliography}

\end{document}